\newtheorem{thm}{Theorem}[section]
\newtheorem{prop}[thm]{Proposition}
\newtheorem{cor}[thm]{Corollary}
\newtheorem{lem}[thm]{Lemma}
\newtheorem{defi}[thm]{Definition}
\newtheorem{remark}[thm]{Remark}
\newtheorem{example}[thm]{Example}
\newtheorem{pb}[thm]{Problem}
\newenvironment{rk}{\begin{remark}\rm}{\end{remark}}
\newenvironment{problem}{\begin{pb}\rm}{\end{pb}}
\numberwithin{equation}{section}
\newcommand{\real}{{\mathbb R}}
\newcommand{\ent}{{\mathbb Z}}
\newcommand{\com}{{\mathbb C}}
\newcommand{\un}{{\mathds {1}}}
\newcommand{\T}{{\mathbb T}}
\newcommand{\A}{{\mathcal A}}
\newcommand{\B}{{\mathcal B}}
\newcommand{\E}{{\mathcal E}}
\newcommand{\I}{{\mathbb I}}
\newcommand{\M}{{\mathcal M}}
\newcommand{\N}{{\mathcal N}}
\newcommand{\Q}{{\mathbb Q}}
\renewcommand{\a}{\alpha}
\renewcommand{\b}{\beta}
\newcommand{\Ga}{\Gamma}
\newcommand{\D}{\Delta}
\newcommand{\e}{\varepsilon}
\newcommand{\f}{\varphi}
\newcommand{\La}{\Lambda}
\renewcommand{\l}{\lambda}
\newcommand{\ot}{\otimes}
\newcommand{\8}{\infty}
\newcommand{\el}{\ell}
\newcommand{\la}{\langle}
\newcommand{\ra}{\rangle}
\newcommand{\wt}{\widetilde}
\newcommand{\wh}{\widehat}
\newcommand{\n}{\noindent}
\newcommand{\be}{\begin{eqnarray*}}
\newcommand{\ee}{\end{eqnarray*}}
\newcommand{\beq}{\begin{equation}}
\newcommand{\eeq}{\end{equation}}
\newcommand{\beqn}{\begin{equation*}}
\newcommand{\eeqn}{\end{equation*}}
\begin{document}

\title{Harmonic analysis on quantum tori}

\thanks{{\it 2000 Mathematics Subject Classification:} Primary: 46L50, 46L07. Secondary: 58L34, 43A55}
\thanks{{\it Key words:} Quantum tori, Fourier series, square Fej\'er means,  square and circular Poisson means, Bochner-Riesz means, maximal inequalities, pointwise convergence, completely bounded Fourier multipliers, Hardy and BMO spaces, Littlewood-Paley theory.}

\author{Zeqian Chen}
\address{Wuhan Institute of Physics and Mathematics, Chinese Academy of
Sciences, 30 West Strict, Xiao-Hong-Shan, Wuhan
430071, China}
\email{zqchen@mail.wipm.ac.cn}
\thanks{Z. Chen is partially supported by NSFC grant No. 11171338.}

\author{Quanhua Xu}
\address{School of Mathematics and Statistics, Wuhan University, Wuhan 430072, China and Laboratoire de Math{\'e}matiques, Universit{\'e} de Franche-Comt{\'e},
25030 Besan\c{c}on Cedex, France}
\email{qxu@univ-fcomte.fr}
\thanks{Q. Xu and Z. Yin are partially supported by ANR-2011-BS01-008-01.}

\author{Zhi Yin}
\address{School of Mathematics and Statistics, Wuhan University, Wuhan 430072, China and Laboratoire de Math{\'e}matiques, Universit{\'e} de Franche-Comt{\'e},
25030 Besan\c{c}on Cedex, France}
\email{hustyinzhi@163.com}

\date{}
\maketitle

\markboth{Z. Chen, Q. Xu, and Z. Yin}%
{Harmonic analysis on quantum tori}

\begin{abstract}
This paper is devoted to the study of harmonic analysis on quantum
tori. We consider several summation methods on these tori, including the square Fej\'er means, square and circular Poisson means, and Bochner-Riesz means. We first establish the maximal inequalities for these means, then obtain the corresponding pointwise convergence theorems.  In particular, we prove the noncommutative analogue of the classical Stein theorem on Bochner-Riesz means. The second part of the paper deals with Fourier multipliers on quantum tori. We prove that the completely bounded $L_p$ Fourier multipliers on a quantum torus are exactly  those on the classical torus of the same dimension. Finally, we present the Littlewood-Paley theory associated with the circular Poisson semigroup on quantum tori. We show that the Hardy spaces in this setting possess the usual properties of Hardy spaces, as one can expect. These include the quantum torus analogue of Fefferman's $\mathrm{H}_1$-BMO duality theorem and interpolation theorems. Our analysis is based on the recent developments of noncommutative martingale/ergodic inequalities and  Littlewood-Paley-Stein theory.
\end{abstract}

\bigskip

\tableofcontents


\section{Introduction}\label{Intro}


The subject of this paper follows the current line of investigation on noncommutative harmonic analysis. This topic has many interactions with other fields such as operator spaces, quantum probability, operator algebras, and of course, harmonic analysis. The aspect we are interested in is particularly related to the recent developments of noncommutative martingale/ergodic inequalities and Littlewood-Paley-Stein theory for quantum Markov semigroups.  Motivated by operator spaces and by using tools from this theory, many classical martingale and ergodic inequalities have been successfully transferred to the noncommutative setting (see, for instance, \cite{PX1997, Junge2002, JX2003, JX2007,  Ran2002, Ran2007, PR2006, Bek2008, BCPY2010, Per2009}). These inequalities of quantum probabilistic nature have, in return, applications to operator space theory (cf., e.g.  \cite{PS2002, Junge2005, JP2008, JP2010, JP2010b, Xu2006, Xu2006b}). Closely related to that, harmonic analysis on quantum semigroups has started to be developed in the last years. This first period of development of the noncommutative Littlewood-Paley-Stein theory deals with square function inequalities, $\mathrm{H}_1$-BMO duality and Riesz transforms  (cf. \cite{JLX2006, Mei2007, Mei2008, JM2010, JM2011}). One can also include in this topic the very fresh promising direction of research on the Calder\'on-Zygmund singular integral operators in the noncommutative setting (cf. \cite{Parcet2009, MP2009, JMP2011}). The concern of the present paper is directly linked to this last direction. Our objective is to develop harmonic analysis on quantum tori.

Quantum or noncommutative tori are fundamental  examples in operator algebras and probably the most accessible interesting class of objects of study in  noncommutative geometry (cf. \cite{Connes1980, Connes1994}). There exist extensive works on them (see, for instance, the survey paper by Rieffel \cite{Rie1990} for those before the 1990's). We refer to \cite{CM2011, EL2007, Var2006} for some illustrations of more recent developments on this topic.

We now recall the definition of quantum tori. Let $d \ge 2$ and $\theta = (\theta_{k j})$ be a real skew-symmetric
$d \times d$-matrix. The $d$-dimensional noncommutative
torus $\mathcal{A}_{\theta}$ is the universal $C^*$-algebra generated by $d$
unitary operators  $U_1, \ldots, U_d$ satisfying the following commutation
relation
 $$U_k U_j = e^{2 \pi \mathrm{i} \theta_{k j}} U_j U_k,\quad j,k=1,\ldots, d.$$
There exists a  faithful tracial state  $\tau$ on $\mathcal{A}_{\theta}.$  Let $\T^d_\theta$ be the von Neumann algebra in the GNS
representation of $\tau$.  $\T^d_\theta$  is called the quantum $d$-torus associated with $\theta$. Note that if $\theta=0$, then $\mathcal{A}_{\theta}=C(\mathbb{T}^d)$  and $\T^d_{\theta}=L_\8(\mathbb{T}^d)$, where $\T^d$ is the usual $d$-torus.  So a quantum $d$-torus is a deformation of the usual $d$-torus. It is thus natural to expect that $\T^d_\theta$ shares many properties with $\T^d$.  This is indeed the case for differential geometry, as shown by the works of Connes and his collaborators. However, little is done regarding analysis.  To our best knowledge, up to now, only the mean convergence theorem of quantum Fourier series by the square Fej\'er summation was proved at the $C^*$-algebra level (cf. \cite{Weaver1996, {Weaver2001}}), and on the other hand,  the quantum torus analogue of Sobolev inequalities was obtained only in the Hilbert, i.e., $L_2$ space case (cf. \cite{Spera92}).  The reason of this lack of development of analysis might be explained by numerous difficulties one may encounter when dealing with noncommutative $L_p$-spaces, since these spaces come up unavoidably if one wishes to do analysis on quantum tori. For instance, the usual way of proving pointwise convergence theorems is to pass through the corresponding maximal inequalities. But the study of maximal inequalities is one of the most delicate and difficult parts in noncommutative analysis.

\medskip

 This paper is the first one of a long project that intends to develop analysis on quantum tori and more generally on twisted crossed products by amenable groups. Our aim here is to study some important aspects of harmonic analysis on $\T^d_\theta$. The subject that we address is three-fold:
\begin{enumerate}[{\rm i)}]

\item {\it Convergence of Fourier series.} We consider several summation methods on $\T^d_\theta$, including the square Fej\'er means, square and circular Poisson means, and Bochner-Riesz means. We first establish the maximal inequalities for them  and then obtain the corresponding pointwise convergence theorems. This part heavily relies on the theory of noncommutative martingale and ergodic inequalities.

\item {\it Fourier multipliers.} The right framework for our study of Fourier multipliers is operator space theory. We  show that for $1\le p\le\8$ the completely bounded  $L_p$ Fourier multipliers  on $\T^d_\theta$  coincide with those on $\T^d$.

\item {\it Hardy and {\rm BMO} spaces.} Based on the recent development of the noncommutative Littlewood-Paley-Stein theory and the operator-valued harmonic analysis, we define Hardy and BMO spaces on $\T^d_\theta$ via the circular Poisson semigroup. We show  that the properties of Hardy spaces in the classical case remain true in the quantum setting. In particular, we get  the $\mathrm{H}_1$-BMO duality theorem.

\end{enumerate}

One main strategy for approaching these problems is  to transfer them to the corresponding ones in the case of operator-valued functions on the classical tori, and then to use existing results in the latter case or adapt classical arguments. Due to the noncommutativity of operator product, substantial difficulties arise in our arguments, like usually in noncommutative analysis. One of the subtlest parts of our arguments is the proof of the weak type $(1,1)$ maximal inequalities for the square Fej\'er and Poisson means because of their multiple-parameter nature. This is the first time that noncommutative weak type $(1,1)$ maximal inequalities are considered for mappings of  this nature. Another intricate part concerns  the  analogue  for $\T^d_\theta$ of the classical Stein theorem on Bochner-Riesz means. The proof of the corresponding maximal inequalities is quite technical too. Our study of Hardy spaces via the Littlewood-Paley theory necessitates a very careful analysis of various BMO-norms and square functions. The difficulty of this study is partly explained by the lack of an explicit handy formula of the circular Poisson kernel on $\T^d$ for $d\ge2$. 

\medskip

We end this introduction with a brief description of the organization of the paper. In Section \ref{Pre} we present some preliminaries and notation on quantum tori. This section also introduces our transference method. The simple section \ref{MeanConverg} defines the summation methods studied in the paper and deals with the mean convergence of quantum Fourier series by them. Section 4 is devoted to the maximal inequalities associated to these summation methods. Their proofs depend, via transference, on some general maximal inequalities for operator-valued functions on $\real^d$ (or $\T^d$) that are of interest for their own right.  These maximal inequalities  are then applied in  Section \ref{PointwiseConvg} to obtain the corresponding pointwise convergence theorems. Section \ref{BRMean} deals with the Bochner-Riesz means. The main theorem there is the quantum analogue of Stein's classical theorem. The difficult part is the type $(p, p)$ maximal inequality for these means. In Section \ref{CBMultiplier} we  discuss $L_p$ Fourier multipliers on $\T^d_\theta$. We show that a Fourier multiplier is completely bounded on the noncommutative $L_p$-space associated to $\T^d_\theta$ iff it is completely bounded on $L_p(\T^d)$. In this case, the two completely bounded norms are equal.  Finally, in Section \ref{H1BMOLPT}, we present the Littlewood-Paley theory on $\T^d_\theta$ and define the associated Hardy and BMO spaces using the circular Poisson semigroup, and show that they possess all expected properties of the usual Hardy spaces on $\real^d$. Our approach is to transfer this theory to the operator-valued case on $\T^d$ and  to use Mei's arguments in \cite{Mei2007} for the $\real^d$ setting. Since the geometry of $\T^d$ and the circular Poisson kernel are less handy than those of $\real^d$, we cannot directly apply Mei's results to  our case. However, considering functions on $\T^d$ as periodic functions on $\real^d$, we can still reduce most of our problems to the corresponding ones on periodic functions on $\real^d$, then adapt Mei's argument to the periodic case. A good part of this section is devoted to the study of several BMO-norms and square functions naturally appearing in this periodization procedure.


\section{Preliminaries}\label{Pre}



\subsection{Noncommutative $L_p$ spaces}


Let  $\mathcal{M}$ be a von Neumann algebra and $\mathcal{M}_+$ its positive part. Recall that
a {\it trace} on $\mathcal{M}$ is a map $\tau: \mathcal{M}_+
\rightarrow [0, \infty ]$ satisfying:
\begin{enumerate}[{\rm i)}]

\item $\tau (x + y) = \tau (x) + \tau (y)$ for
arbitrary $x , y \in \mathcal{M}_+;$

\item $\tau ( \lambda x ) = \lambda \tau (x)$ for any $\lambda
\in [0, \infty )$ and $x \in \mathcal{M}_+;$

\item $\tau (x^* x) = \tau (xx^*)$ for all $x\in \mathcal{M}.$

 \end{enumerate}
$\tau$ is said to be {\it normal} if $\sup_{\gamma} \tau
(x_{\gamma}) = \tau ( \sup_{\gamma} x_{\gamma})$ for any bounded
increasing net $(x_{\gamma} )$ in $\mathcal{M}_+,$ {\it semifinite}
if for each $x \in \mathcal{M}_+ \backslash \{0\}$ there is a
nonzero $y \in \mathcal{M}_+$ such that $y \leq x$ and $\tau (y) <
\infty,$ and {\it faithful} if for each $x \in \mathcal{M}_+
\backslash \{0\},$ $\tau (x) > 0.$ A von Neumann algebra
$\mathcal{M}$ is called {\it semifinite} if it admits a normal
semifinite faithful trace $\tau.$ We refer to \cite{Tak1979} for theory of von Neumann algebras. 
Throughout this paper, $\mathcal{M}$ will always denote a semifinite von Neumann 
algebra equipped with a normal semifinite faithful trace $\tau.$

Denote by $\mathcal{S}_+$ the set of all $x \in \mathcal{M}_+$ such
that $\tau (\mathrm{supp} (x) ) < \infty,$ where $\mathrm{supp} (x)$ is
the support of $x$ which is defined as the least projection $e$ in
$\mathcal{M}$ such that $ex = x$ or equivalently $xe = x.$ Let
$\mathcal{S}$ be the linear span of $\mathcal{S}_+.$ Then
$\mathcal{S}$ is a $\ast$-subalgebra of $\mathcal{M}$ which is
$w^*$-dense in $\mathcal{M}.$ Moreover, for each $0 < p < \infty,$
$x \in \mathcal{S}$ implies $| x |^p \in \mathcal{S}_+$ (and so
$\tau (| x |^p)< \infty$), where $|x| = (x^* x )^{1/2}$ is the
modulus of $x.$ Now, we define $\| x \|_p = \left [\tau (| x |^p)
\right ]^{1/p} $ for all $x \in \mathcal{S}.$ One can show that
$\|\cdot \|_p$ is a norm on $\mathcal{S}$ if $1 \leq p < \infty,$
and a quasi-norm (more precisely, $p$-norm) if $0< p < 1.$ The
completion of $( \mathcal{S}, \|\cdot \|_p )$ is denoted by $L_p
(\mathcal{M}, \tau )$ or simply by $L_p(\M)$. This is the noncommutative 
$L_p$-space associated with $(\mathcal{M}, \tau ).$ The elements of $L_p(\M)$ 
can be described by densely 
defined closed operators measurable with respect to $(\M,\tau)$, like in
the commutative case. 
For convenience, we set
$L_{\infty} (\mathcal{M}) = \mathcal{M}$ equipped with the
operator norm. The trace $\tau$ can be extended to a linear
functional on $\mathcal{S},$ still denoted by $\tau.$ Since $| \tau
(x) | \leq \| x \|_1$ for all $x \in \mathcal{S},$ $\tau$ further extends to
a continuous functional on $L_1 (\mathcal{M}).$

Let $0 < r, p, q \leq \infty$ be such that $1/ r = 1/p +
1/q.$ If $x \in L_p (\mathcal{M}), y \in L_q (\mathcal{M})$ then $ xy \in L_r (\mathcal{M})$
and  the following H\"older inequality holds:
 $$\| x y \|_r \leq \| x \|_p \| y\|_q.$$
In particular, if $r = 1,$ $| \tau (x y) |
\leq \| x y \|_1 \leq \| x \|_p \| y \|_q$ for arbitrary $x \in L_p
(\mathcal{M})$ and $y \in L_q (\mathcal{M}).$ This
defines a natural duality between $L_p (\mathcal{M})$ and
$L_q (\mathcal{M}): \langle x, y \rangle = \tau (x y).$ For
any $1 \leq p < \infty$ we have $L_p (\mathcal{M})^* = L_q
(\mathcal{M})$ isometrically. Thus, $L_1 (\mathcal{M})$ is the predual $\mathcal{M}_*$ of $\mathcal{M},$ and $L_p
(\mathcal{M})$ is reflexive for $1 < p < \infty.$  We refer  to \cite{PX2003} for more information on noncommutative
$L_p$-spaces.


\subsection{Quantum tori}


Let $d\ge2$ and $\theta=(\theta_{kj})$ be a real skew symmetric $d\times d$-matrix. The associated $d$-dimensional noncommutative
torus $\mathcal{A}_{\theta}$ is the universal $C^*$-algebra generated by $d$
unitary operators $U_1, \ldots, U_d$ satisfying the following commutation
relation
 \beq \label{eq:CommuRelation}
 U_k U_j = e^{2 \pi \mathrm{i} \theta_{k j}} U_j U_k,\quad j,k=1,\ldots, d.
 \eeq
We will use standard notation from multiple Fourier series. Let  $U=(U_1,\cdots, U_d)$. For $m=(m_1,\cdots,m_d)\in\ent^d$ we define
 $$U^m=U_1^{m_1}\cdots U_d^{m_d}.$$
 A polynomial in $U$ is a finite sum
  $$ x =\sum_{m \in \mathbb{Z}^d}\alpha_{m} U^{m}\quad \text{with}\quad
 \alpha_{m} \in \mathbb{C},$$
that is, $\alpha_{m} =0$ for all but
finite indices $m \in \mathbb{Z}^d.$ The involution algebra
$\mathcal{P}_{\theta}$ of such all polynomials is
dense in $\A_{\theta}.$ For any polynomial $x$ as above we define
 $$\tau (x) = \alpha_{\mathbf{0}},$$
where $\mathbf{0}=(0, \cdots, 0)$.
Then, $\tau$ extends to a  faithful  tracial state on $\A_{\theta}$.  Let  $\mathbb{T}^d_{\theta}$ be the $w^*$-closure of $\A_{\theta}$ in  the GNS representation of $\tau$. This is our $d$-dimensional quantum torus. The state $\tau$ extends to a normal faithful tracial state on $\mathbb{T}^d_{\theta}$ that will be denoted again by $\tau$. Recall that the von Neumann algebra $\mathbb{T}^d_{\theta}$ is hyperfinite.

Since $\tau$ is a state, $L_q(\T^d_\theta)\subset L_p(\T^d_\theta)$ for any $0<p<q\le\8$.  Any $x\in L_1(\T^d_\theta)$ admits a formal
Fourier series:
 $$x \sim \sum_{m \in\mathbb{Z}^d} \hat{x} ( m ) U^{m},$$
where \
 $$\hat{x}( m) = \tau((U^m)^*x),\quad m \in \mathbb{Z}^d$$
are the Fourier coefficients of $x$. $x$ is, of course, uniquely determined by its Fourier series.


\subsection{Transference}\label{Transference}


We denote the usual $d$-torus by $\mathbb{T}^d$:
 $$\mathbb{T}^d = \big \{(z_1, \ldots, z_d): \; |z_j| =1, z_j \in\mathbb{C},\; 1\le j\le d \big \}.$$
$\mathbb{T}^d$ is equipped with the usual topology and group law
multiplication, that is,
 $$z \cdot w = (z_1, \ldots, z_d) \cdot(w_1, \ldots, w_d) = (z_1 w_1, \ldots, z_d w_d). $$
For any $m \in \mathbb{Z}^d$ and $z =(z_1, \ldots, z_d) \in
\mathbb{T}^d$ let
 $$z^m =z_1^{m_1}\cdots z_d^{m_d}.$$
We will need the tensor von Neumann algebra $\mathcal{N}_{\theta} = L_{\infty} (\mathbb{T}^d) \overline{\otimes}
\mathbb{T}^d_{\theta}$, equipped with the tensor trace $\nu = \int d
m \otimes \tau,$ where $d m$ is  normalized Haar measure on
$\mathbb{T}^d.$ Note that for every $0 < p < \8,$
 $$L_p(\mathcal{N}_{\theta}, \nu ) \cong L_p (\mathbb{T}^d; L_p (\mathbb{T}^d_{\theta})).$$
The space on the right hand side is the space of Bochner $p$-integrable functions from $\T^d$ to $L_p(\mathbb{T}^d_{\theta})$.  Accordingly, let $C(\mathbb{T}^d; \A_{\theta})$ denote the $C^*$-algebra of continuous functions from $\T^d$ to $\A_{\theta}$.  For each $z
\in \mathbb{T}^d,$ define $\pi_{z}$ to be the isomorphism of
$\mathbb{T}^d_{\theta}$ determined by
  $$\pi_{z} (U^{m}) = z^{m} U^{m } = z_1^{m_1} \cdots z_d^{m_d} U_1^{m_1} \cdots U_d^{m_d}. $$
It is clear that  $\pi_{z}$ is trace preserving, so extends to an isometry on  $L_p(\mathbb{T}^d_{\theta})$ for every $0 < p <\8$. Thus we have
 $$\|\pi_{z} (x) \|_p = \|x\|_p,\quad x\in L_p(\mathbb{T}^d_{\theta}),\; 0 < p \le\8.$$

\begin{prop}\label{prop:TransC}
 For any $x \in L_p (\mathbb{T}^d_{\theta})$ the function
$\tilde{x}: z \mapsto \pi_z(x)$ is continuous from
$\mathbb{T}^d$ to $L_p(\mathbb{T}^d_{\theta})$ $($with respect to the $w^*$-topology for $p=\8)$. If $x\in\A_\theta$, it is continuous from $\mathbb{T}^d$ to $\A_\theta$.
 \end{prop}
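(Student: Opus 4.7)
The plan is to prove continuity first on the dense subalgebra $\mathcal{P}_\theta$ of polynomials, and then to transfer continuity to the general case via an $\varepsilon/3$-argument using the isometry property $\|\pi_z(y)\|_p = \|y\|_p$ recorded just before the proposition. For the $w^*$-case at $p = \infty$, an extra duality step will be required.

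First I would handle polynomials directly. If $y = \sum_{m \in F} \alpha_m U^m$ with $F \subset \mathbb{Z}^d$ finite, then
\begin{equation*}
\pi_z(y) - \pi_{z_0}(y) = \sum_{m \in F} \alpha_m (z^m - z_0^m) U^m,
\end{equation*}
so $\|\pi_z(y) - \pi_{z_0}(y)\|_\infty \le \sum_{m \in F} |\alpha_m|\,|z^m - z_0^m| \to 0$ as $z \to z_0$. Since $\|\cdot\|_p \le \|\cdot\|_\infty$, continuity on $\mathcal{P}_\theta$ holds in every $L_p$-norm and in the $\mathcal{A}_\theta$-norm.

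For $p < \infty$ and arbitrary $x \in L_p(\mathbb{T}^d_\theta)$, fix $\varepsilon > 0$, choose $y \in \mathcal{P}_\theta$ with $\|x - y\|_p < \varepsilon/3$ (possible by density of $\mathcal{P}_\theta$ in $L_p(\mathbb{T}^d_\theta)$), and use the isometry $\|\pi_z(x - y)\|_p = \|x - y\|_p$ to bound
\begin{equation*}
\|\pi_z(x) - \pi_{z_0}(x)\|_p \le \|\pi_z(x - y)\|_p + \|\pi_z(y) - \pi_{z_0}(y)\|_p + \|\pi_{z_0}(y - x)\|_p,
\end{equation*}
which is less than $\varepsilon$ for $z$ close enough to $z_0$. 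The $\mathcal{A}_\theta$-statement is identical with $p = \infty$ replaced by the $C^*$-norm, using norm-density of $\mathcal{P}_\theta$ in $\mathcal{A}_\theta$.

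The main obstacle is the $w^*$-case on $\mathbb{T}^d_\theta$, since $\mathcal{P}_\theta$ is not norm-dense there and the above $\varepsilon/3$-argument fails. I would proceed by duality: for $\phi \in L_1(\mathbb{T}^d_\theta)$, use that $\pi_z$ is a trace-preserving $\ast$-automorphism with inverse $\pi_{\bar z}$, which gives
\begin{equation*}
\tau\bigl(\pi_z(x)\,\phi\bigr) = \tau\bigl(\pi_z(x\,\pi_{\bar z}(\phi))\bigr) = \tau\bigl(x\,\pi_{\bar z}(\phi)\bigr).
\end{equation*}
By the already proven $p=1$ case applied to $\phi$, the map $z \mapsto \pi_{\bar z}(\phi)$ is norm-continuous from $\mathbb{T}^d$ into $L_1(\mathbb{T}^d_\theta)$, so $z \mapsto \tau(x\,\pi_{\bar z}(\phi))$ is continuous. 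Since this holds for every $\phi \in L_1(\mathbb{T}^d_\theta)$, the map $z \mapsto \pi_z(x)$ is $w^*$-continuous, completing the proof.
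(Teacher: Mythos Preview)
Your proof is correct and follows exactly the same strategy as the paper's own proof: approximate by polynomials (for which continuity is obvious) and use the isometry of $\pi_z$ for the $\varepsilon/3$-argument when $p<\infty$ or $x\in\mathcal{A}_\theta$, then deduce the $w^*$-case $p=\infty$ from the $p=1$ case by duality. You have simply spelled out the details that the paper leaves implicit, including the explicit identity $\tau(\pi_z(x)\phi)=\tau(x\,\pi_{\bar z}(\phi))$ underlying the duality step.
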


\begin{proof}
Consider first the case  $0 < p < \8$. Let $x \in L_p (\mathbb{T}^d_{\theta}).$ Since
$\mathcal{P}_{\theta}$ is dense in $L_p
(\mathbb{T}^d_{\theta}),$ for arbitrary $\varepsilon >0$ there is
$x_0 \in \mathcal{P}_{\theta}$ such that $\| x - x_0
\|_p < \varepsilon.$ Clearly, $\pi_z (x_0)$ is a polynomial in $U$
of the same degree as $x_0.$ Thus, $z \mapsto \pi_z
(x_0)$ is continuous from $\mathbb{T}^d$ into
$L_p(\mathbb{T}^d_{\theta}).$ We then deduce the desired continuity of  $\tilde{x}$. The same argument works equally for $\A_\theta$.
The  case of $p=\8$ follows from that of $p=1$ by duality.
\end{proof}

The previous result in the case of $p=\infty$ implies, in particular, that the map $x\mapsto \tilde{x}$ establishes an isomorphism from
$\mathbb{T}^d_{\theta}$ into $\N_{\theta}.$ It is also clear that this isomorphism is trace preserving. Thus we get the following

\begin{cor}\label{prop:TransLp}
 \begin{enumerate}[{\rm i)}]

\item Let $0 < p \leq \8.$ If $x\in L_p (\mathbb{T}^d_{\theta}),$ then $\tilde{x} \in L_p (\N_{\theta})$ and $\| \tilde{x} \|_p = \| x\|_p,$ that is, $x \mapsto \tilde{x}$ is an isometric embedding from $L_p (\mathbb{T}^d_{\theta})$ into $L_p (\N_{\theta}).$ Moreover, this map is also an isomorphism from $\A_\theta$ into $C(\T^d; \A_\theta)$.

\item Let $\widetilde{\mathbb{T}^d_{\theta}} = \{ \tilde{x}: x \in \mathbb{T}^d_{\theta}\}.$ Then $\widetilde{\mathbb{T}^d_{\theta}}$ is a von Neumann subalgebra of $\mathcal{N}_{\theta}$ and the associated conditional expectation is given by
 $$\mathbb{E} (f)(z) = \pi_{z} \Big ( \int_{\mathbb{T}^d} \pi_{\bar{w}}
 \big [ f( w )\big ] d m (w) \Big ),\quad z\in\T^d, \; f \in \N_{\theta}.$$
Moreover, $\mathbb{E}$ extends to a contractive projection from $L_p(\N_{\theta})$ onto $L_p(\widetilde{\mathbb{T}^d_{\theta}})$ for $1\leq p\leq \infty.$
\item $L_p (\mathbb{T}^d_{\theta})$ is isometric to $L_p (\widetilde{\mathbb{T}^d_{\theta}})$ for every $0 < p \le \8.$

\end{enumerate}

\end{cor}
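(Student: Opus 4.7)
The plan is to establish (i) from the trace-preservation of each $\pi_z$, derive (ii) by verifying that the given formula defines a normal trace-preserving conditional expectation, and obtain (iii) as an immediate consequence of (i).

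For (i), fix $0<p<\infty$ and $x\in L_p(\T^d_\theta)$. Proposition \ref{prop:TransC} gives strong continuity of $\tilde x$, hence strong measurability, and because each $\pi_z$ preserves the trace, $\|\pi_z(x)\|_p=\|x\|_p$ for every $z\in\T^d$. Integrating over $\T^d$ against normalized Haar measure yields
$$\|\tilde x\|_{L_p(\N_\theta)}^p=\int_{\T^d}\|\pi_z(x)\|_p^p\,dm(z)=\|x\|_p^p.$$
For $p=\infty$, since $\tilde x$ is $w^*$-continuous and $\pi_z$ is an isometry for every $z$, the essential supremum coincides with the pointwise supremum, giving $\|\tilde x\|_\infty=\|x\|$. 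The embedding $\A_\theta\hookrightarrow C(\T^d;\A_\theta)$ follows from the continuous part of Proposition \ref{prop:TransC}. Multiplicativity of $x\mapsto\tilde x$ is inherited from each $\pi_z$ being a $*$-isomorphism.

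For (ii), the paragraph preceding the corollary already asserts that $x\mapsto\tilde x$ is a trace-preserving $*$-monomorphism from $\T^d_\theta$ into $\N_\theta$; since this map is $w^*$-continuous (its pre-adjoint being $g\mapsto\int_{\T^d}\pi_{\bar z}[g(z)]\,dm(z)$), its image $\widetilde{\T^d_\theta}$ is a von Neumann subalgebra of $\N_\theta$. It remains to show that the displayed formula defines the associated conditional expectation. Setting $y:=\int_{\T^d}\pi_{\bar w}[f(w)]\,dm(w)\in\T^d_\theta$, we have $\mathbb{E}(f)(z)=\pi_z(y)=\tilde y(z)$, so $\mathbb{E}$ lands in $\widetilde{\T^d_\theta}$; moreover $\mathbb{E}(\tilde x)=\tilde x$ reduces to the identity $\int_{\T^d}\pi_{\bar w}\pi_w(x)\,dm(w)=x$. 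For the left bimodule property, with $a=\tilde x$,
$$\mathbb{E}(af)(z)=\pi_z\bigg(\int_{\T^d}\pi_{\bar w}[\pi_w(x)f(w)]\,dm(w)\bigg)=\pi_z\bigg(x\int_{\T^d}\pi_{\bar w}[f(w)]\,dm(w)\bigg)=\tilde x(z)\,\mathbb{E}(f)(z),$$
using that $\pi_{\bar w}$ is a $*$-homomorphism with inverse $\pi_w$; a symmetric computation gives the right-module property. Trace preservation $\nu(\mathbb{E} f)=\nu(f)$ follows from Fubini and the $\tau$-invariance of each $\pi_{\bar w}$. The extension of $\mathbb{E}$ to a contractive projection $L_p(\N_\theta)\to L_p(\widetilde{\T^d_\theta})$ for $1\le p\le\infty$ is then the standard extension property of normal trace-preserving conditional expectations in the noncommutative setting.

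Finally, (iii) is an immediate consequence of (i): the latter provides a trace-preserving $*$-isomorphism at the $L_\infty$-level onto $\widetilde{\T^d_\theta}$, so by functoriality of the noncommutative $L_p$ construction it extends to an isometric isomorphism $L_p(\T^d_\theta)\to L_p(\widetilde{\T^d_\theta})$ for every $0<p\le\infty$. The main delicate point I anticipate is justifying the bimodule computation for arbitrary $f\in\N_\theta$ rather than only on polynomial-valued step functions where it is manifest; this should be handled either by $w^*$-approximation using normality of $\mathbb{E}$, or by working first on $L_2(\N_\theta)$ where density of polynomial-valued simple functions makes the identity transparent, and then transferring back by duality and the contractivity established above.
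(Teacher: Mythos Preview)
Your proof is correct and follows the natural line the paper itself sketches: the paper states the corollary without a formal proof, merely remarking before it that $x\mapsto\tilde x$ is a trace-preserving isomorphism of $\T^d_\theta$ into $\N_\theta$, from which the corollary is meant to follow. Your argument supplies exactly the details this remark leaves implicit---the $L_p$-isometry from trace preservation of each $\pi_z$, the verification that the displayed formula for $\mathbb E$ is idempotent, bimodular over $\widetilde{\T^d_\theta}$, and trace preserving---so there is no genuine methodological difference to report.
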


Our transference method consists in the following procedure:
 \be
 x\in L_p(\mathbb{T}^d_{\theta}) \mapsto \tilde{x}\in
 L_p(\widetilde{\mathbb{T}^d_{\theta}}) \subset L_p(\mathcal{N}_{\theta}).
 \ee
This allows us to work in
$L_p(\mathcal{N}_{\theta}).$ Then, in order to return back to
$L_p(\widetilde{\mathbb{T}^d_{\theta}}) \cong
L_p(\mathbb{T}^d_{\theta}),$ we apply the conditional expectation
$\mathbb{E}$ to elements in $L_p(\mathcal{N}_{\theta}).$


\section{Mean Convergence}\label{MeanConverg}


We begin with the mean convergence of Fourier series defined on quantum tori for an illustration of the transference method described in the previous section. This section also introduces the summation methods studied throughout the paper.  They are the following:

\smallskip

\begin{enumerate}[$\bullet$]

\item The {\it square Fej\'er mean}
 $$F_N [x] = \sum_{m \in \mathbb{Z}^d, |m |_{\infty}\leq N} \Big ( 1-\frac{|m_1|}{N+1} \Big )
 \cdots \Big( 1-\frac{|m_d|}{N+1} \Big ) \hat{x} ( m) U^{m},\quad N \ge 0.$$

\item The {\it square Poisson mean}
 $$P_r[x] = \sum_{m \in \mathbb{Z}^d } \hat{x} ( m )r^{|m|_1} U^{m}, \quad 0 \le r < 1.$$

\item The {\it circular Poisson mean}
 $$\mathbb{P}_r[x] = \sum_{m \in \mathbb{Z}^d } \hat{x} ( m ) r^{|m|_2} U^{m}, \quad 0 \le r < 1.$$

\item Let  $\Phi$ be a continuous function on $\mathbb{R}^d$ with $\Phi (0) =1.$  Define
  $$\Phi^{\varepsilon}[x]=\sum_{m \in \mathbb{Z}^d} \Phi (\varepsilon m) \hat{x} ( m ) U^m, \quad \varepsilon>0.$$
We will always impose the following condition to $\Phi$:
 \beq\label{eq:PhiFuncCondition}
 \left \{
 \begin{split}
 & ~~\Phi (s) = \hat{\varphi} (s)\quad \text{with}\; \int_{\mathbb{R}^d} \varphi (s) d s =1;\\
 & ~~|\Phi (s) | + |\varphi(s)| \le A (1 + |s|)^{-d-\delta},\;
 \forall s \in \mathbb{R}^d,
 \end{split} \right.
 \eeq
for some $A, \delta >0$ (cf. \cite[p. 253]{SW1975}). \\
 In the above, $x\in L_1(\mathbb{T}^d_{\theta})$ has its Fourier series expansion: $x\sim\sum_{m\in\ent^d}\hat x(m)U^m$, and for $m\in\ent^d$
 $$|m|_p = (\sum^d_{j=1} |m_j|^p )^{1/p}$$
with the usual modification for $p=\8$.

 \end{enumerate}

The last summation method contains two special important examples of the function $\Phi$. The first one is
 $$\Phi (s) = e^{- 2 \pi|s|}\quad \text{and} \quad \varphi (s) = c_d( 1 + |s|^2 )^{- (d+ 1)/2},\quad \forall s\in \mathbb{R}^d,$$
 where we have used the standard notation in harmonic analysis that $|s|=|s|_2$ denotes the Euclidean norm of $\real^d$.  In this case,
 $$\Phi^\e[x]= \sum_{m \in \mathbb{Z}^d} e^{- 2 \pi |m|_2\e} \hat{x} ( m ) U^m.$$
 This is the circular Poisson integral $\mathbb{P}_r [x]$ of $x$ with $r = e^{- 2 \pi \e}.$

The second example arises when $\alpha > (d-1)/2$ in the following definition
  \be \Phi (s) =
 \left \{ \begin{split}
 &(1 - |s|^2)^{\alpha} & \quad \text{if}\quad |s| < 1,\\
 &0 & \quad \text{if}\quad |s| \ge 1.
 \end{split} \right.
 \ee
It is well known that
 $$\varphi (s) = \hat{\Phi} (s) =
 \frac{\Gamma(\alpha+1)J_{\frac{d}{2}+\alpha}(2\pi|s|)}{\pi^{\alpha}|s|^{\frac{d}{2}+\alpha}},\quad
 \forall\; s \in \mathbb{R}^d \setminus \{0\}, $$
where $J_{\lambda}$ is the Bessel function of order $\lambda.$ In this case we obtain
the {\it Bochner-Riesz mean} of order $\alpha$ on the quantum
torus:
 $$B^{\alpha}_R[x] = \sum_{| m|_2 \leq R} \Big ( 1 -\frac{|m|_2^2}{R^2} \Big )^{\alpha} \hat{x} ( m ) U^m.$$

A fundamental problem is in which sense the above means of  the operator $x$ converge back to  $x$. This problem is partly
investigated in this section. Indeed, we have the following mean
convergence theorem.

\begin{prop}\label{th:MeanConver}
 Let $1 \le p < \infty$ and $x \in L_p (\mathbb{T}^d_{\theta}).$
Then $F_N[x]$ converges to $x$ in $L_p(\mathbb{T}^d_{\theta})$ as $N\to\8$. The same convergence holds for $P_r[x]$, $\mathbb{P}_r[x]$ as
$r \to 1$ and  $\Phi^{\varepsilon}[x]$ as $\varepsilon \to 0$. Moreover, for $p=\8$ these limits hold for any
 $x\in\A_{\theta}$.
\end{prop}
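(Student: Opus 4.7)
The plan is to derive Proposition \ref{th:MeanConver} by transferring each statement to a vector-valued summability result on the classical torus $\T^d$, using the machinery set up in Subsection \ref{Transference}. The three ingredients I would combine are: (i) the isometric embedding $x\mapsto\tilde x$ from $L_p(\T^d_\theta)$ into $L_p(\N_\theta)\cong L_p(\T^d;L_p(\T^d_\theta))$ provided by Corollary \ref{prop:TransLp}, (ii) an identification of each quantum summation method on $x$ with a classical scalar-kernel convolution acting on $\tilde x$, and (iii) the standard fact that the Fej\'er, Poisson, circular Poisson, and $\Phi^\e$ kernels are approximate identities on $L_p(\T^d;X)$ for any Banach space $X$ and $1\le p<\8$.

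First I would compute the Fourier coefficients of $\tilde x$ viewed as a function on $\T^d$ with values in $L_p(\T^d_\theta)$: a short calculation using $\pi_z(U^m)=z^m U^m$ gives $\widehat{\tilde x}(m)=\hat x(m)\,U^m$ for every $m\in\ent^d$. Consequently each of the four summation operators commutes with the tilde in the sense that
$$\widetilde{F_N[x]}(z)=(K_N* \tilde x)(z),\qquad \widetilde{\Phi^\e[x]}(z)=(\Psi_\e* \tilde x)(z),$$
where $K_N$ is the $d$-dimensional square Fej\'er kernel on $\T^d$, $\Psi_\e$ is the kernel on $\T^d$ whose Fourier coefficients are $\Phi(\e m)$, and the convolutions are vector-valued; analogous formulas hold for $P_r$ and $\mathbb{P}_r$ with the usual product Poisson and circular Poisson kernels. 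The existence and $L_1(\T^d)$-nature of $\Psi_\e$, together with $\int_{\T^d}\Psi_\e\,dm=1$, follow from periodizing $\e^{-d}\f(\cdot/\e)$ on $\real^d$, whose convergence and normalization are guaranteed by the decay and integrability assumptions \eqref{eq:PhiFuncCondition}.

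Once this identification is in place, the vector-valued approximate identity theorem applied to $\tilde x\in L_p(\T^d;L_p(\T^d_\theta))$ yields $K_N* \tilde x\to\tilde x$ in norm, and similarly for the other kernels; Corollary \ref{prop:TransLp} then gives $\|F_N[x]-x\|_{L_p(\T^d_\theta)}=\|\widetilde{F_N[x]-x}\|_{L_p(\N_\theta)}\to 0$, and likewise for $P_r$, $\mathbb{P}_r$, and $\Phi^\e$. For the assertion at $p=\8$, Proposition \ref{prop:TransC} shows that when $x\in\A_\theta$ the function $\tilde x$ is continuous, hence uniformly continuous, from $\T^d$ into $\A_\theta$, and the standard approximate identity argument in $C(\T^d;\A_\theta)$ (split the convolution into a small neighborhood of the identity where uniform continuity controls the integrand, and its complement where the kernel has small mass) delivers uniform convergence and thus convergence in $\A_\theta$. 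The step I would expect to demand the most care is the identification of $\Psi_\e$ as a bona fide $L_1(\T^d)$ summability kernel with the prescribed Fourier coefficients, since one has no explicit formula for it in general and must argue through Poisson summation together with the decay hypothesis on $\f$.
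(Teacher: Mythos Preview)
Your proposal is correct and follows essentially the same route as the paper: transfer $x$ to $\tilde x\in L_p(\T^d;L_p(\T^d_\theta))$ (or $C(\T^d;\A_\theta)$ for $p=\8$), identify each quantum summation mean with the corresponding classical convolution acting on $\tilde x$, and invoke the vector-valued approximate identity proposition. The paper is terser---it simply cites the transference method and Proposition~\ref{le:VectorValueMeanConverConvo}---while you spell out the Fourier-coefficient computation $\widehat{\tilde x}(m)=\hat x(m)U^m$ and flag the periodization of $\f_\e$ as the point needing care; the paper handles that last point by defining $K_\e$ via \eqref{eq:PhiFunConv} and referring to Stein--Weiss for the verification that $(K_\e)$ is an approximate identity.
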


The proof can be done either by imitating the classical proofs (cf.
\cite{SW1975}), or using the transference argument. The
second method is more elegant and simpler. The corresponding results in $L_p
(\mathcal{N}_{\theta})$ are simple and well-known when one writes
$L_p (\mathcal{N}_{\theta}) = L_p (\mathbb{T}^d; L_p
(\mathbb{T}^d_{\theta})),$ which reduces the mean convergence in
$L_p ( \mathbb{T}^d_{\theta})$ to the corresponding one in
the vector-valued case on the usual torus $\mathbb{T}^d.$

As all these summation methods in the vector-valued case are given
by approximation identities, it is better to state and prove first a
general convergence theorem for convolution operators by an
approximation identity in $L_p (\mathbb{T}^d; X),$ where $X$ is a
Banach space. Here $L_p (\mathbb{T}^d; X)$ denotes the $L_p$-space of Bochner $p$-integrable functions from $\T^d$ to $X$.

\begin{def}\label{df:ApprIdent}
Let $\Lambda$ be a directed set. An {\it approximation identity} on
the multiplication group $\mathbb{T}^d$ (as $\lambda \to \lambda_0 $) is a
family of functions $(\varphi_{\lambda})_{\lambda \in \Lambda}$ in
$L_1 (\mathbb{T}^d)$ verifying the following three conditions:
\begin{enumerate}[{\rm i)}]

\item $\int_{\mathbb{T}^d} \varphi_{\lambda} ( z ) d m (z) =1$ for all $\lambda \in \Lambda.$

\item $\sup_{\lambda \in \Lambda} \| \varphi_{\lambda} \|_1 < \8.$

\item For any neighborhood $V$ of the identity $(1, \ldots, 1)$ of the group $\mathbb{T}^d$ we have \be \int_{\mathbb{T}^d \setminus V} | \varphi_{\lambda}| d m (z) \to 0\; \text{as}\; \lambda \to \lambda_0. \ee

\end{enumerate}
\end{def}

Recall that for $N \ge0$ an integer, the square Fej\'{e}r
kernel on $\mathbb{T}^d$ is
 \beq\label{squareFejerKer}
  F_N ( z ) = \sum_{\substack{m \in \mathbb{Z}^d,\,
 |m |_{\infty}\leq N}} \Big ( 1-\frac{|m_1|}{N+1} \Big ) \cdots
 \Big ( 1-\frac{|m_d|}{N+1} \Big ) z^{ m}.
 \eeq
For $0 \le r < 1,$ the {\it square and circular} Poisson kernels are respectively
 \beq\label{PoissonKer}
 P_r (z) = \sum_{m \in\mathbb{Z}^d} r^{| m|_1} z^{m}\quad\text{and}\quad
 \mathbb{P}_r (z) =\sum_{m \in \mathbb{Z}^d} r^{| m|_2} z^{m}.
 \eeq
It is well known that $(F_N)_{N \ge 1},$ $(P_r)_{0 \le r <1}$ and $(\mathbb{P}_r )_{0 \le r <1}$ are all
approximation identities on $\mathbb{T}^d.$ Also, if we write $\Phi^{\varepsilon} (s) = \Phi (\varepsilon s),$
then $\Phi^{\varepsilon} = \wh{\varphi_{\varepsilon}}$ with $\varphi_{\varepsilon} (s) = \frac{1}{\varepsilon^d} \varphi \big ( \frac{s}{\varepsilon} \big )$ for $s\in\real^d$.  Let
 $$K_{\varepsilon} (s) = \sum_{m \in \mathbb{Z}^d} \varphi_{\varepsilon} (s + m),\quad s\in\real^d.$$
$K_{\varepsilon}$ is  periodic, so can be viewed as a function on $\T^d$. Then by \eqref{eq:PhiFuncCondition} it can be proved that $( K_{\varepsilon}
)_{\varepsilon >0}$ is an approximation identity on $\mathbb{T}^d$
such that
 \beq\label{eq:PhiFunConv}
 ( K_{\varepsilon} \ast f ) (z)= \sum_{m \in \mathbb{Z}^d} \Phi ( \varepsilon m ) \hat f(m) z^m,
 \quad f \sim\sum_{m \in \mathbb{Z}^d}  \hat f(m) z^m
  \eeq
(see the proof of Theorem VII.2.11 in \cite{SW1975}).

Let $X$ be a Banach space and let $1 \le p \le\8.$ Suppose that
$(\varphi_{\lambda})_{\lambda \in \Lambda}$ is an approximation
identity on $\mathbb{T}^d.$ For any $f \in L_p (\mathbb{T}^d; X)$ we
define the convolution $\varphi_{\lambda}\ast f$ by
 $$(\varphi_{\lambda} \ast f ) (z) =
 \int_{\mathbb{T}^d} f ( w ) \varphi_{\lambda} (\bar{w}\cdot z) d m (w),\quad \forall\; z \in \mathbb{T}^d. $$
Then for any $f \in L_p (\mathbb{T}^d; X)$ we have $\varphi_{\lambda}\ast f \in L_p
(\mathbb{T}^d; X)$ and
 $$\|\varphi_{\lambda}\ast f \|_p \le \| f  \|_p \| \varphi_{\lambda} \|_1.$$
The following vector-valued result is well-known. The proof in the
scalar case (cf. e.g. \cite[Theorem~1.2.19]{Gra2008}) is valid as well in the vector-valued setting without any change. $ C(\mathbb{T}^d;X)$ denotes the space of continuous functions from $\T^d$ to $X$, equipped with the uniform norm.

\begin{prop}\label{le:VectorValueMeanConverConvo}
 Let $X$ be a Banach space and let $1 \le p < \8.$ Let $(\varphi_{\lambda})_{\lambda \in \Lambda}$ be an approximation
identity on $\mathbb{T}^d.$ If $f \in L_p
(\mathbb{T}^d; X)$, then
 $$ \| \varphi_{\lambda} \ast f - f \|_p \to0\; \text{as}\; \lambda \to \l_0.$$
Moreover, when $p=\infty$ the above limit holds for any $f\in C(\mathbb{T}^d;X).$
 \end{prop}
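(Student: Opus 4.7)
The plan is to first prove the assertion for $f\in C(\T^d; X)$ (which covers the $p=\8$ case directly and serves as the base for the $L_p$ case), and then extend to general $f\in L_p(\T^d; X)$ for $1\le p<\8$ via a standard density argument, exploiting the uniform bound $\sup_\lambda\|\varphi_\lambda\|_1<\8$ from condition (ii).

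For the continuous case, I would use the fact that $\int_{\T^d}\varphi_\lambda\,dm=1$ to write
$$(\varphi_\lambda*f)(z)-f(z)=\int_{\T^d}\bigl(f(w\cdot z)-f(z)\bigr)\varphi_\lambda(\bar w)\,dm(w),$$
after the change of variable $w\mapsto\bar w\cdot z$. Since $\T^d$ is a compact group and $f\in C(\T^d;X)$, $f$ is uniformly continuous: given $\e>0$ there is a neighborhood $V$ of the identity such that $\|f(w\cdot z)-f(z)\|_X<\e$ for all $z\in\T^d$ and $w\in V$. Splitting the integral over $V$ and $\T^d\setminus V$, the first piece is bounded by $\e\sup_\lambda\|\varphi_\lambda\|_1$, while the second piece is bounded by $2\|f\|_\8\int_{\T^d\setminus V}|\varphi_\lambda|\,dm$, which tends to $0$ as $\lambda\to\lambda_0$ by condition (iii). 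This gives convergence in the uniform norm, settling the $p=\8$ case. Taking $L_p$-norms in $z$ (which are dominated by the uniform norm on the compact group $\T^d$ of total mass one) yields the convergence in $L_p(\T^d;X)$ for any $1\le p<\8$ when $f\in C(\T^d;X)$.

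For general $f\in L_p(\T^d; X)$ with $1\le p<\8$, I would fix $\eta>0$ and choose $g\in C(\T^d; X)$ with $\|f-g\|_p<\eta$, which is possible because continuous $X$-valued functions are dense in $L_p(\T^d; X)$. Then
$$\|\varphi_\lambda*f-f\|_p\le\|\varphi_\lambda*(f-g)\|_p+\|\varphi_\lambda*g-g\|_p+\|g-f\|_p\le(1+C)\eta+\|\varphi_\lambda*g-g\|_p,$$
where $C=\sup_\lambda\|\varphi_\lambda\|_1$. The middle term tends to $0$ by the continuous case, so $\limsup_\lambda\|\varphi_\lambda*f-f\|_p\le(1+C)\eta$, and letting $\eta\to0$ completes the proof. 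I do not foresee a genuine obstacle here: the only point to verify carefully is that uniform continuity of an $X$-valued continuous function on the compact group $\T^d$ holds (a routine open-cover argument, unchanged from the scalar case), and that Bochner-integrable vector-valued simple/continuous functions are dense in $L_p(\T^d; X)$, which is standard.
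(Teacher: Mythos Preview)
Your proposal is correct and is precisely the standard argument the paper has in mind: the paper does not spell out a proof but simply remarks that the result is well known and that the scalar proof (e.g., \cite[Theorem~1.2.19]{Gra2008}) carries over to the vector-valued setting without change. Your write-up is exactly that scalar proof with $|\cdot|$ replaced by $\|\cdot\|_X$, so there is nothing to add.
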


It is now clear that Proposition \ref{th:MeanConver} immediately follows from
Proposition \ref{le:VectorValueMeanConverConvo} via the transference method.


\section{Maximal inequalities}\label{Maximal inequalities}


In this section, we present the maximal inequalities of the
summation methods of  Fourier series defined previously. These
inequalities will be used for the pointwise convergence  in the next
section. We first recall the definition of the noncommutative
maximal norm introduced by Pisier \cite{Pisier1998} and Junge
\cite{Junge2002}. Let $\M$ be a von Neumann algebra equipped 
with a normal semifinite faithful trace $\tau.$ Let $1\le p\le\8$. 
We define $L_p(\M;\el_\8)$  to
be the space of all sequences $x=(x_n)_{n\ge1}$ in $L_p(\M)$ which
admit a factorization of the following form: there exist $a, b\in
L_{2p}(\M)$ and a bounded sequence $y=(y_n)$ in $L_\8(\M)$ such that
 $$x_n=ay_nb, \quad \forall\; n\ge1.$$
The norm of  $x$ in $L_p(\M;\el_\8)$ is given by
 $$\|x\|_{L_p(\M;\el_\8)}=\inf\big\{\|a\|_{2p}\,
 \sup_{n\ge1}\|y_n\|_\8\,\|b\|_{2p}\big\} ,$$
where the infimum runs over all factorizations of $x$ as above.

We will follow the convention adopted in  \cite{JX2007}  that
$\|x\|_{L_p(\M;\el_\8)}$ is denoted by
 $\big\|\sup_n^+x_n\big\|_p\ .$ We should warn the reader that
$\big\|\sup^+_nx_n\big\|_p$ is just a notation since $\sup_nx_n$
does not make any sense in the noncommutative setting. We find,
however, that $\big\|\sup^+_nx_n\big\|_p$ is more intuitive than
$\|x\|_{L_p(\M;\el_\8)}$. The introduction of this notation is
partly justified by the following remark.
\medskip

\begin{remark}\label{rk:MaxFunct}\rm
Let $x=(x_n)$ be a sequence of selfadjoint operators in $L_p(\M)$.
Then $x\in L_p(\M;\el_\8)$ iff there exists a positive element $a\in
L_p(\M)$ such that $-a\leq x_n \le a$ for all $n\ge1$. In this case we have
 $$\big \|{\sup_{n \ge1}}^{+} x_n \big \|_p=\inf\big\{\|a\|_p\;:\; a\in L_p(\M),\; -a\leq x_n\le a,\;\forall\; n\ge1\big\}.$$
\end{remark}

More generally, if $\Lambda$ is any index set, we define  $L_p (\M;
\ell_{\8}(\Lambda))$ as the space of all $x = (x_{\lambda})_{\lambda \in \Lambda}$ in $L_p (\M)$ that can be factorized as
 $$x_\l=ay_\l b\quad\mbox{with}\quad a, b\in L_{2p}(\M),\; y_\l\in L_\8(\M),\; \sup_\l\|y_\l\|_\8<\8.$$
The norm of $L_p (\M; \ell_{\8}(\Lambda))$ is defined by
 $$\big \| {\sup_{\l\in\Lambda}}^{+} x_\l\big \|_p=\inf_{x_\l=ay_\l b}\big\{\|a\|_{2p}\,
 \sup_{\l\in\Lambda}\|y_\l\|_\8\,\|b\|_{2p}\big\} .$$
It is shown in \cite{JX2007} that $x\in L_p (\M;
\ell_{\8}(\Lambda))$ iff
 $$\sup\big\{\big \| {\sup_{\l\in J}}^{+} x_\l\big \|_p\;:\; J\subset\La,\; J\textrm{ finite}\big\}<\8.$$
In this case, $\big \| {\sup_{\l\in\Lambda}}^{+} x_\l\big \|_p$ is
equal to the above supremum.

The following is the main theorem of this section.

\begin{thm}\label{th:FejerPoissonPhiMaxIneq}
 \begin{enumerate}[{\rm i)}]
 \item  Let $x\in L_1(\T^d_\theta)$. Then for  any $\a>0$ there exists a projection $e\in\T^d_\theta$ such that
 $$\sup_{N\ge0}\big\|eF_N[x]e\big\|_\8\le\a\quad\text{and}\quad \tau(e^\perp)\le C_d\,\frac{\|x\|_1}\a.$$

\item  Let $1<p\le\8$. Then
 $$\big\|{\sup_{N\ge0}}^+F_N[x]\big\|_p\le C_d\,\frac{p^2}{ (p-1)^2}\,\|x\|_p,\quad\forall\; x\in L_p(\T^d_\theta).$$
Both statements hold for the three other summation methods $P_r$,   $\mathbb{P}_r$ and $\Phi^{\varepsilon}.$ In the case of $\Phi^{\varepsilon}$, the constant $C_d$ also  depends on the two constants in \eqref{eq:PhiFuncCondition}.
\end{enumerate}
\end{thm}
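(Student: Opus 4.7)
The plan is to use the transference of Section~\ref{Transference} to reduce every statement to a maximal inequality for convolutions on $\T^d$ valued in a noncommutative $L_p$-space. Indeed, from $\tilde x(z) = \pi_z(x) = \sum_{m} \hat x(m) z^m U^m$ one computes directly
$$
\widetilde{F_N[x]}(z) = (F_N \ast \tilde x)(z),\qquad \widetilde{\mathbb{P}_r[x]}(z) = (\mathbb{P}_r \ast \tilde x)(z),
$$
and similarly for $P_r[x]$ and $\Phi^\varepsilon[x]$ (using the periodization $K_\varepsilon$ of~\eqref{eq:PhiFunConv}). Since $x \mapsto \tilde x$ is an isometric embedding by Corollary~\ref{prop:TransLp}, once we establish the corresponding weak $(1,1)$ and strong $(p,p)$ bounds for these convolutions acting on $L_p(\T^d; L_p(\M))$ for a general semifinite $\M$, the maximal bounds on $\T^d_\theta$ follow; the equivariance of the convolution operators under the $\T^d$-action on $\N_\theta$ is what allows one to extract a projection $e \in \T^d_\theta$ from the corresponding projection on $\N_\theta$.

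For the circular Poisson mean $\mathbb{P}_r$, the kernel generates a radial Markov convolution semigroup on $\T^d$ and the noncommutative Hopf--Dunford--Schwartz maximal ergodic theorem of Junge--Xu~\cite{JX2007} supplies both the weak $(1,1)$ estimate and the strong $(p,p)$ bound with the stated constant $C_d p^2/(p-1)^2$. For general $\Phi^\varepsilon$, condition~\eqref{eq:PhiFuncCondition} ensures that the periodized kernel $K_\varepsilon$ of~\eqref{eq:PhiFunConv} is pointwise dominated by a constant multiple of the circular Poisson kernel, so the $\mathbb{P}_r$-bounds transfer.

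The square Fej\'er and square Poisson means are the delicate cases because their kernels split as $d$-fold tensor products of one-dimensional kernels. For the strong $(p,p)$ bound with $p>1$ one can either iterate the one-dimensional maximal inequality variable by variable and interpolate with the $L_\infty$-endpoint to recover the correct dependence $p^2/(p-1)^2$, or, more efficiently, dominate $\sup_N F_N \ast f$ and $\sup_r P_r \ast f$ by a noncommutative cubical Hardy--Littlewood maximal operator \`a la Mei~\cite{Mei2007}, whose norm on $L_p(\T^d; L_p(\M))$ is $\le C_d\,p^2/(p-1)^2$. The crucial point here is that the supremum is taken over a single scalar parameter, which preserves the isotropic scaling of the underlying averaging kernel.

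The main obstacle is the weak type $(1,1)$ estimate for $F_N$ and $P_r$: iterating one-dimensional weak $(1,1)$ bounds is hopeless because the strong maximal function is only of type $L(\log L)^{d-1}$, and the noncommutative framework rules out pointwise arguments. The plan here is to dominate the kernels $F_N$ and $P_r$ by a sum of normalized indicators of concentric cubes with geometrically decaying total mass (a dyadic kernel decomposition), reducing the problem to the noncommutative cubical Hardy--Littlewood weak $(1,1)$ inequality in the operator-valued setting. Verifying the uniformity of this domination in $N$ (respectively in $r$), and producing the final projection $e \in \T^d_\theta$ from the weak $(1,1)$ projection on $\N_\theta$ via the $\T^d$-equivariance of the whole construction, constitutes the technical core of the proof.
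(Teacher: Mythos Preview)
Your transference framework and the treatment of $\mathbb{P}_r$ and $\Phi^\varepsilon$ are essentially correct and match the paper's approach. The real issue is the weak type $(1,1)$ for $F_N$ and $P_r$, and here your plan has a genuine gap.

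You propose to dominate the product kernel $F_N(s)=G_N(s_1)\cdots G_N(s_d)$ (equivalently $\prod_j\eta_\varepsilon(s_j)$ with $\eta(t)=(1+t^2)^{-1}$, $\varepsilon=(N+1)^{-1}$) by a sum of normalized indicators of concentric cubes with geometrically decaying total mass, and then invoke Mei's cubical Hardy--Littlewood weak $(1,1)$. This cannot work for $d\ge 2$. The smallest majorant of $\prod_j\eta_\varepsilon(s_j)$ that depends only on $|s|_\infty$ and is decreasing is $\psi_\varepsilon(|s|_\infty)$ with $\psi_\varepsilon(r)=\varepsilon^{-d}(1+r^2/\varepsilon^2)^{-1}$, obtained by evaluating the product along an axis. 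Its integral over $\real^d$ is $\int_0^\infty(1+u^2)^{-1}u^{d-1}\,du$ after rescaling, which diverges for every $d\ge 2$. Equivalently, any domination $\prod_j\eta_\varepsilon(s_j)\le\sum_k c_k|Q_k|^{-1}\un_{Q_k}$ by concentric cubes forces $\sum_k c_k=\infty$. So the ``single isotropic maximal function'' route is blocked, and no amount of technical care with the dyadic decomposition will rescue it. This is precisely why the paper singles out this case as the subtlest part.

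What the paper does instead (its Theorem~\ref{le:VecSingularPhiMaxIneq}) is to exploit the product structure \emph{anisotropically}. One splits $\real^d$ into the $d!$ Weyl chambers $|t_{j_1}|\ge\cdots\ge|t_{j_d}|$, and in each chamber performs a multi-index dyadic decomposition $|t_j|\sim 2^{k_j}$ with $k_1\ge k_2\ge\cdots\ge k_d$. For each fixed $k=(k_1,\dots,k_d)$ one rescales the ``short'' coordinates by $2^{k_1-k_j}$, which turns the rectangular block into a cube of side $\sim 2^{k_1}\varepsilon$ and replaces $f$ by an anisotropically dilated $f_k$ with $\|f_k\|_1=2^{(d-1)k_1-(k_2+\cdots+k_d)}\|f\|_1$. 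Mei's cubical weak $(1,1)$ is then applied to each $f_k$ separately at a carefully chosen level $\alpha_k$; the Jacobian of the dilation exactly cancels the blow-up of $\|f_k\|_1$, and the product decay $2^{-(k_1+\cdots+k_d)\delta}$ gives enough room to sum both the trace estimate $\sum_k\tau(e_k^\perp)$ and the sup-norm estimate $\sum_k 2^{-(k_1+\cdots+k_d)\delta}\alpha_k$. The projection in $\T^d_\theta$ is then extracted via the conditional-expectation lemma (Lemma~\ref{weak-sub}). This anisotropic-dilation-plus-weighted-threshold argument is the missing idea in your proposal.

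A secondary point: your claim that iterating the one-dimensional $(p,p)$ bound and then interpolating with $L_\infty$ recovers the constant $C_d\,p^2/(p-1)^2$ is not justified; iteration gives $p^{2d}/(p-1)^{2d}$, and interpolating that with the $L_\infty$ endpoint does not obviously improve it to the square. The paper obtains the correct order by interpolating its weak $(1,1)$ with $L_\infty$ via the Junge--Xu interpolation theorem, so the weak $(1,1)$ is doing double duty here.
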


In the terminology of  \cite{JX2007}, we can rephrase parts i) and ii) as that the map $x\mapsto (F_N[x])_{N\geq0}$ is of weak type $(1,1)$ and of type $(p,p)$, respectively. Before proceeding to the proof of the theorem, we point out that its part concerning the circular Poisson mean $\mathbb{P}_r$ can be easily deduced from  \cite{JX2007}. This is due to the fact that $\big(\mathbb{P}_{r}\big)_{0\le r<1}$ is a symmetric diffusion semigroup on $\mathbb{T}_{\theta}^d$. Let us show this latter statement.  Define
\be
\delta_j(U_j)=2\pi {\rm i} U_j, \;\; \delta_j (U_k)=0, \quad k\neq j
\ee
(cf. \cite{Connes1980}). These operators $\delta_j$ commute with the involution of $\mathbb{T}_{\theta}^d$
and play the role of the partial derivatives
$\frac{\partial}{\partial x_j}$ on the classical $d$-torus. Let $\triangle= \sum_{j=1}^d \delta_j^2.$ Then $\triangle$ is a negative operator on $L_2(\mathbb{T}_{\theta}^d)$ and its spectrum consists of
the numbers $-4\pi ^2 |m|_2^2, \,m\in \mathbb{Z}^d.$ For any
$\lambda>0,$ we have
 $$
\|(\lambda-\triangle)^{-1}\|\leq \sup_{z\in \sigma(-\triangle)}\frac1{|\lambda+z|}\leq \frac{1}{\lambda}.
 $$
Then by the Hille-Yosida theorem, $\triangle$ is the infinitesimal
generator of a  semigroup of contractions on  $L_2(\mathbb{T}_{\theta}^d)$. Denote this
semigroup by $(T_t)$. Then $T_t=\exp(t\triangle).$ It is easy to check that  $(T_t)$
satisfies the following properties:
\begin{enumerate}[{\rm i)}]

\item $T_t$ is a contraction on $\mathbb{T}_{\theta}^d: \|T_t x\|_{\infty}\leq
\|x\|_{\infty}$ for all $x\in \mathbb{T}_{\theta}^d;$

\item $T_t$ is positive: $T_t x\geq 0$ if $x\geq 0;$

\item $\tau\circ T_t=\tau:$ $\tau(T_t x)=\tau(x)$ for all $x\in \mathbb{T}_{\theta}^d;$

\item $T_t$ is symmetric relative to $\tau:$
$\tau(T_t(y)^*x)=\tau(y^*T_t(x))$ for all $x, y \in
L_2(\mathbb{T}_{\theta}^d).$
 \end{enumerate}
Then $(T_t)$ extends to a semigroup of contractions on $L_p(\mathbb{T}_{\theta}^d)$ for every $1\le p\le\8$. This is the heat semigroup of $\mathbb{T}_{\theta}^d$. The circular Poisson means $\mathbb{P}_r[x]$ is exactly the Poisson semigroup subordinated to $T_t,$ where
$r=e^{-2\pi t}.$ Then by  \cite{JX2007}, we get the part of Theorem \ref{th:FejerPoissonPhiMaxIneq} concerning the circular Poisson means.

The previous argument does not apply to the three other means. However, we can get the type $(p, p)$ inequality for $F_N$ and $P_r$ again from \cite{JX2007} but not with the right estimate on the constant $C_p$. Indeed,  the square Poisson mean $P_r$ is the restriction to the diagonal $(r, ..., r)$ of the following multiple parameter semigroup $P_{(r_1,..., r_d)}$:
 $$
 P_{(r_1,..., r_d)}[x] = \sum_{m \in \mathbb{Z}^d } \hat{x} ( m )r_1^{|m_1|}\cdots r_d^{|m_d|}U^{m}.
 $$
By iteration $P_{(r_1,..., r_d)}$ satisfies a maximal inequality on $L_p(\mathbb{T}_{\theta}^d)$ with a relevant constant controlled by $C^d p^{2d}/(p-1)^{2d}$. It then follows that the map $x\mapsto (P_r[x])_r$ is of type $(p, p)$ with the same constant. Since each Fej\'{e}r mean $F_N$ is majorized by $P_r$ for an appropriate $r$, we deduce that the same maximal inequality holds for $F_N$. We cannot, unfortunately, prove the weak type $(1, 1)$ maximal inequality for $F_N$ and $P_r$ in this way.

The rest of this section is essentially devoted to the proof of Theorem \ref{th:FejerPoissonPhiMaxIneq}. We will use transference and require the following two theorems  which are of interest for their own right. Recall that $\M$ denotes a von Neumann algebra  with a normal semifinite faithful trace $\tau$. $L_\8(\real^d)\overline{\ot}\M$ is equipped with the tensor trace $\nu=dx\ot\tau$, where $dx$ is Lebesgue measure on $\real^d$.

\begin{thm}\label{le:VecRadialPhiMaxIneq}
  Let $\varphi$ be an integrable function on $\real^d$ such that $|\varphi|$ is radial and radially decreasing.
 Let $\varphi_\e(s)=\frac1{\e^d}\, \varphi(\frac s\e)$ for $s\in\real^d$ and $\e>0$.
 \begin{enumerate}[{\rm i)}]
 \item  Let $f\in L_1(\real^d; L_1(\M))$. Then for  any $\a>0$ there exists a projection $e\in L_\8(\real^d)\overline{\ot}\M$ such that
 $$\sup_{\e>0}\big\|e(\f_\e*f)e\big\|_\8\le\a\quad\text{and}\quad \nu(e^\perp)\le C_d\|\f\|_1\frac{\|f\|_1}\a.$$

\item  Let $1<p\le\8$. Then
 $$\big\|{\sup_{\e>0}}^+\f_\e*f\big\|_p\le C_d\|\f\|_1\,\frac{p^2}{ (p-1)^2}\|f\|_p,\quad\forall\; f\in L_p(\real^d; L_p(\M)).$$
\end{enumerate}
  \end{thm}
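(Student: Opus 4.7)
The plan is to reduce the maximal inequalities for $\varphi_\varepsilon * f$ to the noncommutative Hardy–Littlewood maximal inequalities for the family of ball averages $A_r f(x) = |B(0,r)|^{-1}\int_{B(x,r)} f(y)\,dy$, which are available from existing literature: Mei \cite{Mei2007} establishes the weak-type $(1,1)$ bound with constant $C_d$, and Junge–Xu \cite{JX2007} establishes the type $(p,p)$ bound with constant $C_d p^2/(p-1)^2$. The scalar-valued prototype is the pointwise inequality $\sup_\varepsilon |\varphi_\varepsilon * f| \le \|\varphi\|_1 \cdot Mf$; the task is to transpose this to the operator-valued setting using the $\sup^+$-framework.

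First I would reduce to the case where $\varphi \ge 0$ and $f \ge 0$. Decomposing $f$ into its four positive parts (real/imaginary, positive/negative), each of norm $\le \|f\|_p$, costs only an absolute factor. For $f \ge 0$, the pointwise bound $-|\varphi|_\varepsilon * f \le \varphi_\varepsilon * f \le |\varphi|_\varepsilon * f$ together with Remark \ref{rk:MaxFunct} implies that any common positive dominant (resp. compressing projection) for $\{|\varphi|_\varepsilon * f\}_{\varepsilon>0}$ also works for $\{\varphi_\varepsilon * f\}_{\varepsilon>0}$. Hence it suffices to work with $\varphi = |\varphi|$, which is radial, positive, and radially decreasing.

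Next I use the layer-cake representation: writing $\varphi(y) = \int_0^\infty c(t)\chi_{B(0,t)}(y)\,dt$ for a nonnegative function $c$ with $\int_0^\infty c(t)|B(0,t)|\,dt = \|\varphi\|_1$, and scaling, one obtains via Fubini
$$(\varphi_\varepsilon * f)(x) = \int_0^\infty A_{t\varepsilon} f(x)\,d\mu(t),$$
where $d\mu(t) = c(t)|B(0,t)|\,dt$ is a positive measure on $(0,\infty)$ of total mass $\|\varphi\|_1$. Thus $\varphi_\varepsilon * f$ is a $d\mu$-weighted integral of ball averages $A_{t\varepsilon} f$, and (crucially) the weights $d\mu$ do not depend on $\varepsilon$.

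Finally I invoke the noncommutative Hardy–Littlewood maximal inequalities. For part i), apply the weak-type $(1,1)$ bound at level $\alpha/\|\varphi\|_1$: this yields a projection $e \in L_\infty(\real^d)\overline{\otimes}\M$ with $\nu(e^\perp) \le C_d\|\varphi\|_1\|f\|_1/\alpha$ such that $eA_r f\,e \le (\alpha/\|\varphi\|_1)\mathbf{1}$ for every $r > 0$ (using that $A_r f \ge 0$). Compressing the integral identity above by $e$ on both sides and using the positivity of $d\mu$ gives $e(\varphi_\varepsilon * f)e \le \alpha\mathbf{1}$ for all $\varepsilon > 0$. For part ii), the strong-type bound combined with Remark \ref{rk:MaxFunct} supplies a positive operator $a$ with $\|a\|_p \le C_d p^2/(p-1)^2\|f\|_p$ satisfying $A_r f \le a$ for all $r > 0$; integrating against $d\mu$ yields $\varphi_\varepsilon * f \le \|\varphi\|_1\, a$ uniformly in $\varepsilon$, whence the claimed type $(p,p)$ bound with constant $C_d \|\varphi\|_1 p^2/(p-1)^2$. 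The main obstacle is not any single hard step but rather the bookkeeping needed to make the $\sup^+$ framework compatible with the operator-valued Bochner integral $\int_0^\infty A_{t\varepsilon} f\,d\mu(t)$: one must verify that compression by $e$ and operator-inequalities commute with this integral and, most importantly, that the domination by $a$ (resp.\ the compression bound) is uniform in the parameter $r$ of the quoted Hardy–Littlewood maximal inequality, so that the same $a$ (resp.\ the same $e$) dominates the entire family $\{A_{t\varepsilon} f\}_{t,\varepsilon}$ at once.
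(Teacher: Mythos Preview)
Your proposal is correct and follows the same core idea as the paper: dominate $\varphi_\varepsilon * f$ by a convex combination of ball averages and then invoke Mei's noncommutative Hardy--Littlewood maximal inequality. There are two mild differences worth recording. First, for part~i) the paper works with finite step-function approximations $\varphi^{(n)}=\sum_k\alpha_k\un_{B_k}$ and then passes to the limit via a $w^*$-accumulation point of the resulting projections together with a spectral cut-off $\un_{[1/2,1]}(a)$; your continuous layer-cake representation $\varphi_\varepsilon*f=\int_0^\infty A_{t\varepsilon}f\,d\mu(t)$ bypasses this limiting argument entirely, since a single projection from Mei's inequality already controls all $A_r f$ simultaneously. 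Second, for part~ii) the paper does not quote the strong $(p,p)$ Hardy--Littlewood bound but instead applies the Junge--Xu interpolation theorem directly to the positive map $f\mapsto(\varphi_\varepsilon*f)_\varepsilon$ between its weak $(1,1)$ and trivial $(\infty,\infty)$ endpoints; your route---take the known dominant $a\ge A_r f$ and integrate against $d\mu$---is equivalent but shifts the interpolation one level down to the averages themselves. Both approaches give the same constant $C_d\|\varphi\|_1\,p^2/(p-1)^2$.
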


\begin{proof}
 Let $f\in L_1(\real^d; L_1(\M))$. Without loss of generality, we assume that $f$ is positive. On the other hand, it is easy to reduce the problem to the case where $\f$ is positive too.  Indeed, decomposing $\f$ into its real and imaginary parts, we need only to consider each part separately. Since $f\ge0$, we have
  $${\rm Re} (\f_\e)*f\le |{\rm Re} (\f_\e)|*f\le |\f|_\e*f.$$
This gives the announced reduction. Thus in the sequel we assume that $\f\ge0$.  First take $\varphi$ to be of the form $\varphi=\sum_k \alpha_k
\un_{B_k}$ (a finite sum), where $B_k$ are balls of center 0 and $\a_k\ge0$. Then
\be
\varphi_\e\ast f(s)=\sum_k\alpha_k(\un_{B_k})_\e\ast
f(s)=\sum_k\alpha_k|B_k|M_{\e B_k}(f)(s),
\ee
where $M_{B} (f)(s) = \frac{1}{|B|} \int_{B} f(s-t) dt$ for any ball $B$ centered at $0$. We now appeal to Mei's noncommutative Hardy-Littlewood maximal weak type (1,1) inequality \cite{Mei2007}:  For any $\a>0$ there exists a projection $e\in L_\8(\real^d)\overline{\ot}\M$ such that
 $$\nu(e^\perp)\le C_d\,\frac{\|f\|_1}\a\quad\mbox{and}\quad
   \|e M_{B}(f)e\|_\8\le \a,\; \forall\; \textrm{ball } B \textrm{ centered at } 0.$$
We then deduce that
 $$\|e (\varphi_\e*f) e\|_{\infty}\le C_d\sum_k\alpha_k|B_k| \a=C_d\|\varphi\|_1 \a,\quad\forall\; \e>0.$$
For a general positive $\varphi$, choose an increasing sequence $(\varphi^{(n)})$ of functions of the previous form such that $\varphi^{(n)}$ converges
to $\varphi$ pointwise.  Then for any $\a>0,$ there exists a projection
$e_n\in L_\8(\real^d)\overline{\ot}\M$ such that
 $$\nu(e_n^\perp)\lesssim\frac{\|f\|_1}\a\quad\mbox{and}\quad
  \|e_n (\varphi^{(n)}_\e*f)e_n\|_\8\le \a,\quad\forall\; \e>0.$$
Let $a$ be a $w^{\ast}$-accumulation point of $e_n$. Note that
$$(\varphi_\e^{(n)}\ast f)^{\frac{1}{2}}e_n-(\varphi_\e\ast f)^{\frac{1}{2}}a
=\big((\varphi_\e^{(n)}\ast f)^{\frac{1}{2}}-(\varphi_\e \ast f)^{\frac{1}{2}}\big)e_n
+(\varphi_\e \ast f)^{\frac{1}{2}}(e_n-a).$$ Since
$\varphi^{(n)}\rightarrow \varphi$ increasingly, then $(\varphi_\e^{(n)}\ast f)^{\frac{1}{2}}$ strongly converges to $(\varphi_\e \ast f)^{\frac{1}{2}}.$ Hence $(\varphi_\e^{(n)}\ast f)^{\frac{1}{2}}e_n$
weakly converges to $(\varphi_\e\ast f)^{\frac{1}{2}}a.$ Then we
deduce
 $$\nu(1-a)\lesssim\frac{\|f\|_1}\a\quad\mbox{and}\quad
 \|(\varphi_\e\ast f)^{\frac{1}{2}}a\|_{\infty}\leq\liminf_n\|(\varphi_\e^{(n)}\ast f)^{\frac{1}{2}}e_n\|_{\infty}\leq \a^{\frac{1}{2}}.$$
Let $e=\un_{[\frac{1}{2},\;1]}(a)$, the spectral projection of $a$ corresponding to the interval $[\frac12,\, 1]$. Note that
$1-e=\un_{[\frac{1}{2},\;1]}(1-a).$ Then
$\frac{1}{2}(1-e)\leq 1-a,$ which implies that $\frac{1}{2}\nu
(1-e)\leq \nu (1-a).$ Moreover, letting
$g(r)=\frac{1}{r}\un_{[\frac{1}{2},\;1]}(r), \; r\in
(0,1]$, we have $e= e g(a) a$ and
 $$e (\varphi_\e* f)e=e g(a) [a (\varphi_\e* f) a] e g(a).$$
Since $\|e g(a)\|_{\infty}\leq 2,$ we deduce that
 $$\|e (\varphi_\e* f)e\|_{\infty}\leq 4\|a (\varphi_\e* f)a\|_{\infty}\le 4\a.$$
Therefore  the projection $e$ satisfies:
 $$\nu(e^\perp)\lesssim\frac{\|f\|_1}\a\quad\mbox{and}\quad
  \|e (\varphi_\e*f)e\|_\8 \le 4 \a,\quad \forall\; \e>0.$$
Thus  we get i).

Part ii) is proved by interpolation. It is clear that the map $f\mapsto (\f_\e*f)_{\e>0}$ is of type $(\8,\8)$ with constant $\|\f\|_1$. On the other hand, since we have assumed that $\f\ge0$, $\f_\e*f\ge0$ for $f\ge0$. Thus by the interpolation theorem from \cite{JX2007}, we deduce the desired $(p,p)$ type maximal inequality, i.e., part ii).
 \end{proof}

The conclusion of the previous theorem also holds for another family of functions $\f$ which satisfy an estimate of multiple-parameter nature.

\begin{thm}\label{le:VecSingularPhiMaxIneq}
 Let $\varphi$ be an integrable function on $\real^d$ that has the following
decomposition: $\varphi(s_1,\cdots,s_d)=\varphi_1(s_1)\cdots\varphi_d(s_d),$
where each $\f_k$ satisfies
 $$|\varphi_k(t)| \le \frac{A}{(1+|t|)^{1+\delta}},\quad \forall t \in \mathbb{R},$$
for some $A, \delta >0.$ Then the conclusion of Theorem~\ref{le:VecRadialPhiMaxIneq} remains true.
 \end{thm}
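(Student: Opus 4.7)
The strategy is to exploit the tensor-product structure of $\varphi$ to reduce the problem to rectangular maximal operators and then to Mei's noncommutative cube maximal theorem \cite{Mei2007} via an anisotropic change of variable. As in the proof of Theorem \ref{le:VecRadialPhiMaxIneq}, splitting each $\varphi_k$ and $f$ into real/imaginary and positive/negative parts reduces us to $\varphi\ge 0$ of tensor-product form with each $\varphi_k\ge 0$ still satisfying $\varphi_k(t)\le A(1+|t|)^{-1-\delta}$, and $f\ge 0$.

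The first main step is an \emph{aspect-ratio-free} maximal inequality for rectangles: for any symmetric rectangle $R=\prod_k[-a_k,a_k]$, the single-parameter rectangular maximal operator $M^Rf(s):=\sup_{\e>0}|\e R|^{-1}\int_{\e R}f(s-t)\,dt$ satisfies both the noncommutative weak $(1,1)$ and strong $(p,p)$ maximal inequalities with constants depending only on $d$. Setting $A=\mathrm{diag}(a_1,\ldots,a_d)$ and $\tilde f(u):=f(Au)$, the change of variable $t=Au'$ gives $M^Rf(s)=(M_c\tilde f)(A^{-1}s)$, where $M_c$ is the centered cube maximal operator treated in \cite{Mei2007}. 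Since $\|\tilde f\|_1=|\det A|^{-1}\|f\|_1$ and, for any projection $\tilde e\in L_\infty(\real^d)\overline{\otimes}\M$, the pullback $e(s):=\tilde e(A^{-1}s)$ satisfies $\nu(e^\perp)=|\det A|\,\nu(\tilde e^\perp)$, the two Jacobian factors cancel exactly when Mei's inequalities are transferred to $M^R$.

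The second step is a dyadic tensor decomposition. From $(1+|t|)^{-1-\delta}\lesssim\sum_{n\ge 0}2^{-n(1+\delta)}\un_{\{|t|<2^{n+1}\}}(t)$ and the tensor structure of $\varphi$, one obtains for $f\ge 0$
\begin{equation*}
\varphi_\e*f(s)\;\le\; C_{A,d}\sum_{\vec n\in\ent_+^d}2^{-\delta\sum_k n_k}\cdot\frac{1}{|\e R_{\vec n}|}\int_{\e R_{\vec n}}f(s-t)\,dt,
\end{equation*}
where $R_{\vec n}=\prod_k[-2^{n_k+1},2^{n_k+1}]$ and $|R_{\vec n}|=2^{2d+\sum n_k}$; the $2^{\sum n_k}$ factor from $|R_{\vec n}|$ absorbs the corresponding factor in the decomposition, leaving weights $w_{\vec n}:=2^{-\delta\sum n_k}$ that sum to $(1-2^{-\delta})^{-d}<\infty$.

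Assembling the two steps, part ii) follows from the triangle inequality in $L_p(L_\infty(\real^d)\overline{\otimes}\M;\ell_\infty(\e))$, the uniform strong $(p,p)$ bound of the first step, and the finiteness of $\sum_{\vec n}w_{\vec n}$. For part i), given $\alpha>0$, choose $\alpha_{\vec n}:=\beta\cdot 2^{(\delta/2)\sum n_k}$ with $\beta=\alpha(1-2^{-\delta/2})^d/C_{A,d}$ so that $C_{A,d}\sum w_{\vec n}\alpha_{\vec n}=\alpha$; apply the uniform weak $(1,1)$ of the first step at level $\alpha_{\vec n}$ to produce projections $e_{\vec n}$; and set $e:=\bigwedge_{\vec n}e_{\vec n}$. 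Since $e\le e_{\vec n}$, the dyadic bound gives $\sup_\e\|e\,\varphi_\e*f\,e\|_\infty\le\alpha$, while the geometric series $\sum 1/\alpha_{\vec n}\le(1-2^{-\delta/2})^{-d}/\beta$ is finite and yields $\nu(e^\perp)\le\sum\nu(e_{\vec n}^\perp)\le C\|f\|_1/\alpha$ with $C$ depending only on $A,d,\delta$. The main obstacle is the first step: the exact cancellation of $|\det A|$ is essential, since the aspect ratios of $R_{\vec n}$ grow like $2^{\max n_k-\min n_k}$ and would otherwise overwhelm the decay $2^{-\delta\sum n_k}$ for $\delta\le d-1$, causing the assembled series to diverge.
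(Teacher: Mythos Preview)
Your proof is correct and rests on the same three ingredients as the paper's argument: a dyadic decomposition of $\varphi$, an anisotropic rescaling that converts rectangular averages to cube averages, and Mei's noncommutative Hardy--Littlewood weak $(1,1)$ inequality for cubes, followed by a careful choice of levels $\alpha_{\vec n}$ whose reciprocals and whose $w_{\vec n}$-weighted sums are both finite.

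The organization, however, is genuinely different and somewhat cleaner than the paper's. The paper first splits $\mathbb{R}^d$ into $d!$ simplicial regions according to the ordering $|t_{j_1}|\ge\cdots\ge|t_{j_d}|$, then runs an ordered dyadic sum $k_1\ge k_2\ge\cdots\ge k_d$, and for each term performs the anisotropic change of variable \emph{inline} to land on a cube average of the rescaled function $f_k$. You instead isolate the key mechanism as a stand-alone lemma --- the aspect-ratio-free weak $(1,1)$ bound for $M^R$, established via the dilation $A$ and the exact Jacobian cancellation $|\det A|\cdot|\det A|^{-1}$ --- and then run an unordered dyadic sum over $\vec n\in\mathbb{Z}_+^d$. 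This avoids the $d!$ region-splitting and the triangular index constraint, and makes the role of the Jacobian cancellation (which you correctly flag as the crux) more transparent. Both approaches give constants depending only on $d,A,\delta$; yours simply packages the same rescaling trick more modularly.
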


\begin{proof}
 This proof is much more involved than the previous one. Again, we can assume that all functions $\f_k$ are positive. It suffices to show the weak type $(1,1)$ inequality. Fix a positive $f\in L_1(\real^d; L_1(\M))$.  Let
$I_0=[-1,\; 1]$ and $I_k=\{t\in \mathbb{R}: 2^{k-1}<|t|\le2^{k}\}$
for $k=1,2,\ldots$. Also, let $\tilde{I}_k=[-2^k,\; 2^k].$ Split
$\mathbb{R}^d$ into $d!$  regions of the form $|t_{j_1}|\geq
\cdots\ge |t_{j_d}|,$ where $\{j_1,\dots,j_d\}$ is a permutation of the
set $\{1,\dots,d\}$. Then
 $$\varphi_\e \ast f(s) =\sum_{\{j_1,\dots,j_d\}}\int_{|t_{j_1}|\geq\cdots\geq|t_{j_d}|} \varphi(t)f(s-\e t)dt.$$
By symmetry, it suffices to consider one of these regions, say the one where $|y_1| \geq \cdots \geq|y_d|.$  Let
 $$F_\e(s)=\int_{|t_1|\geq\cdots\geq|t_d|} \varphi(t)f(s-\e t)dt,\quad s=(s_1, ..., s_d)\in\real^d.$$
We must show that for any $\a>0$ there exists a projection $e\in L_{\8}(\mathbb{R}^d)\overline{\otimes}\M$ such that
 \beq\label{weak F}
 \nu(e^\perp)\lesssim\frac{\|f\|_1}\a\quad\text{and}\quad \|eF_\e e\|_\8\le\a.
 \eeq
Using the assumption on $\f$ and by change of variables, we have
 \be\begin{split}
 F_\e(s) &=\sum_{k_1=0}^\8\sum_{k_2=0}^{k_1}\cdots\sum_{k_d=0}^{k_{d-1}}\int_{I_{k_1}}\int_{I_{k_2}}
 \cdots\int_{I_{k_d}}\varphi(t)f(s-\e t)dt\\
 &\lesssim\sum_{k_1=0}^\8\sum_{k_2=0}^{k_1}\cdots\sum_{k_d=0}^{k_{d-1}}2^{-k_1(1+\delta)}\cdots2^{-k_d(1+\delta)}\int_{I_{k_1}}\int_{I_{k_2}}
  \cdots\int_{I_{k_d}}f(s-\e t)dt\\
 &\lesssim\sum_{k_1=0}^\8\sum_{k_2=0}^{k_1}\cdots\sum_{k_d=0}^{k_{d-1}}2^{-k_1(1+\delta)}\cdots2^{-k_d(1+\delta)}\int_{\tilde{I}_{k_1}}\int_{\tilde I_{k_2}} \cdots\int_{\tilde{I}_{k_d}}f(s-\e t)dt\\
 &\lesssim\sum_{k_1=0}^\8\sum_{k_2=0}^{k_1}\cdots\sum_{k_d=0}^{k_{d-1}}2^{-(k_1+\cdots +k_d)\delta}
 \frac{1}{|\tilde{I}_{k_1}|^d}\int_{\tilde{I}_{k_1}^d} f(s_1-\e t_1,s_2-2^{k_2-k_1}\e t_2,\cdots, s_d-2^{k_d-k_1}\e t_d)dt.
 \end{split}\ee
Given a function $g\in L_1(\real^d; L_1(\M))$ and a cube $Q\subset\real^d$ centered at $0$ and with sides parallel to the axes put
 $$M_Q(g)(s)=\frac1{|Q|}\int_Qg(s-t)dt,\quad s\in\real^d.$$
Note that this average function appeared already in the proof of Theorem \ref{le:VecRadialPhiMaxIneq} but with balls instead of cubes.  For any fixed $k=(k_1,\cdots,k_d)$ with $k_1\geq k_2\geq \cdots\geq k_d$ let
 $$f_k(z_1,z_2,\cdots,z_d)=f(z_1,2^{k_2-k_1}z_2,\cdots,2^{k_d-k_1}z_d).$$
Then
 $$ \frac{1}{|\tilde{I}_{k_1}|^d}\int_{\tilde{I}_{k_1}^d} f(s_1-\e t_1,s_2-2^{k_2-k_1}\e t_2,\cdots, s_d-2^{k_d-k_1}\e t_d)dt
 =M_{\e\tilde{I}_{k_1}^d}(f_k)(s_1,2^{k_1-k_2}s_2,\cdots,2^{k_1-k_d}s_d).$$
 Thus
 \beq\label{Ft}
 F_\e (s)\lesssim\sum_{k_1=0}^\8\sum_{k_2=0}^{k_1}\cdots\sum_{k_d=0}^{k_{d-1}}2^{-(k_1+ \cdots +k_d)\delta}
 M_{\e\tilde{I}_{k_1}^d}(f_k)(s_1,2^{k_1-k_2}s_2,\cdots,2^{k_1-k_d}s_d).
 \eeq
Now we use again Mei's noncommutative Hardy-Littlewood maximal weak type $(1,1)$ inequality which remains true with balls replaced by cubes.
For any $\a_k>0,$ there exits a projection $e_k$ in $L_\8(\real^d)\overline{\ot}\M$ such that
 \beq\label{weak fk}
 \nu(e_k^\perp)\le C_d\,\frac{\|f_k\|_1}{\a_k}\quad \text{and} \quad \|e_kM_{\e\tilde{I}_{k_1}^d}(f_k)e_k\|_\8\le \a_k,\quad\forall\; \e>0.
 \eeq
Let $T$ be the mapping
 $$(s_1,s_2,\cdots,s_d)\mapsto(s_1,2^{k_1-k_2}s_2,\cdots,2^{k_1-k_d}s_d).$$
$T$ is a homeomorphism of  $\mathbb{R}^d$, so induces an isomorphism of $L_{\8}(\mathbb{R}^d)\overline{\otimes}\M$, still denoted by $T$. Then for any $g\in L_{\8}(\mathbb{R}^d)\overline{\otimes}\M,$ we have
 $$\int \tau (T(g)(s))ds=\int \tau (g\circ T(s))ds=2^{k_2-k_1}\cdots 2^{k_d-k_1}\int \tau (g(s))ds.$$
Let $\tilde{e}_k=T(e_k).$ Then $\tilde{e}_k$ is a projection and
\beq\label{ek}
\nu(\tilde{e}_k^\perp)=2^{k_2-k_1}\cdots2^{k_d-k_1}\nu(e_k^\perp).
\eeq
 On the other hand,
 $$M_{\e\tilde{I}_{k_1}^d}(f_k)(s_1,2^{k_1-k_2}s_2,\cdots,2^{k_1-k_d}s_d)=T\big(M_{\e\tilde{I}_{k_1}^d}(f_k)\big)(s_1, s_2, \cdots, s_d)$$
and
 $$T\big(e_kM_{\e\tilde{I}_{k_1}^d}(f_k)e_k\big)
 =\tilde e_kM_{\e\tilde{I}_{k_1}^d}(f_k)(\cdot,2^{k_1-k_2}\cdot,\cdots,2^{k_1-k_d}\cdot)\tilde e_k.$$
Therefore, by \eqref{weak fk}
 \beq\label{max fk}
 \big\|\tilde e_kM_{\e\tilde{I}_{k_1}^d}(f_k)(\cdot,2^{k_1-k_2}\cdot,\cdots,2^{k_1-k_d}\cdot)\tilde e_k\big\|_\8
 =\big\|e_kM_{\e\tilde{I}_{k_1}^d}(f_k)e_k\big\|_{\8}\le \a_k,\quad\forall\; \e>0.
 \eeq
Let $\a>0$. For each $k$ with $k_1\geq k_2\geq \cdots\geq k_d$ we choose
 $$\a_k = \a\,2^{k_1\delta/(2d)}2^{k_2\delta(1-1/(2d))}\cdots2^{k_d\delta(1-1/(2d))}.$$
Then
 \beq\label{ak}
 2^{-(k_1+\cdots+k_d)\delta}\a_k=\a\,2^{-k_1\delta/2}\,2^{-n_2/(2d)}\cdots2^{-n_d/(2d)},
 \eeq
where $n_2=k_1-k_2,\cdots, n_d=k_1-k_d.$ Note that all $n_j$ are nonnegative integers.  Finally, let $e=\bigwedge_k\tilde{e}_k.$  Then $e$ is a projection in $L_\8(\real^d)\overline{\ot}\M$, and by \eqref{ek}, \eqref{weak fk}, the definition of $f_k$ and the choice of $\a_k$, we have
 \be\begin{split}
 \nu(e^\perp)
 &\le\sum_{k_1=0}^\8\sum_{k_2=0}^{k_1}\cdots\sum_{k_d=0}^{k_{d-1}}\nu(\tilde{e}^{\perp}_k)\\
 &=\sum_{k_1=0}^\8\sum_{k_2=0}^{k_1}\cdots\sum_{k_d=0}^{k_{d-1}}2^{k_2-k_1}\cdots2^{k_d-k_1}\nu(e_k^\perp)\\
 &\le C_d \sum_{k_1=0}^\8\sum_{k_2=0}^{k_1}\cdots\sum_{k_d=0}^{k_{d-1}}2^{k_2-k_1}\cdots2^{k_d-k_1}\frac{\|f_k\|_1}{\a_k}\\
 &\le C_d \|f\|_1\sum_{k_1=0}^\8\sum_{k_2=0}^{\8}\cdots\sum_{k_d=0}^{\8}\frac{1}{\a_k}\lesssim \frac{\|f\|_1}\a.
 \end{split} \ee
On the other hand, for any $\e>0,$ by \eqref{Ft}, \eqref{max fk} and \eqref{ak}
 \be\begin{split}
 \|eF_\e e\|_{\infty}
 &\le\sum_{k_1=0}^\8\sum_{k_2=0}^{k_1}\cdots\sum_{k_d=0}^{k_{d-1}}2^{-(k_1+\cdots+k_d)\delta}
 \big\|\tilde e_kM_{\e\tilde{I}_{k_1}^d}(f_k)(\cdot,2^{k_1-k_2}\cdot,\cdots,2^{k_1-k_d}\cdot)\tilde e_k\big\|_\8\\
 &\le \sum_{k_1=0}^\8\sum_{k_2=0}^{k_1}\cdots\sum_{k_d=0}^{k_{d-1}}2^{-(k_1+\cdots+k_d)\delta}\a_k\\
 &\le \a\sum_{k_1\ge0}\sum_{n_2\geq0}\cdots\sum_{n_d\geq0}
 2^{-k_1\delta/2}\,2^{-n_2/(2d)}\cdots2^{-n_d/(2d)}\lesssim\a.
 \end{split} \ee
Thus we get the desired estimate \eqref{weak F}, so finish the proof of the theorem.
 \end{proof}

We also require the following lemma for the proof of Theorem \ref{th:FejerPoissonPhiMaxIneq}.

\begin{lem}\label{weak-sub}
 Let $\N$ be a $w^*$-closed involutive subalgebra of $\M$ that is the image of a normal conditional expectation $\E$. Let $(x_n)$ be a sequence of  positive operators
 in $L_1(\N)$. Assume that for any $\a>0$  there exists a projection $\tilde e\in\M$ such that
 $$\sup_n\|\tilde ex_n\tilde e\|_\8\le\a \quad\text{and}\quad \tau(\tilde e^\perp)\le\frac{C}\a.$$
Then there exists a projection $e\in\N$ such that
 $$\sup_n\|ex_ne\|_\8\le4\a \quad\text{and}\quad \tau( e^\perp)\le\frac{2C}\a.$$
\end{lem}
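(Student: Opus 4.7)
The plan is to pass from the projection $\tilde e\in\M$ to a projection $e\in\N$ by taking a suitable spectral projection of $\E(\tilde e)$. Set $a=\E(\tilde e)$. Then $a\in\N$ is positive with $0\le a\le 1$, and the trace-preservation of $\E$ gives $\tau(1-a)=\tau(\tilde e^\perp)\le C/\a$. Since the Schwarz-type inequality is not strong enough here (we need a lower bound on $\E(\tilde e x_n\tilde e)$ by $a x_n a$), I will exploit the $\N$-bimodule property of $\E$ together with the hypothesis that each $x_n$ lies in $\N$.

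The first key step is to write $\tilde e=a+(\tilde e-a)$ and expand
\[
\tilde e x_n\tilde e = a x_na + a x_n(\tilde e-a) + (\tilde e-a)x_n a + (\tilde e-a)x_n(\tilde e-a).
\]
Applying $\E$ and using that $ax_n,\, x_na\in\N$ together with $\E(\tilde e-a)=0$, the two cross terms vanish. Hence
\[
\E(\tilde e x_n\tilde e) = ax_na + \E\bigl((\tilde e-a)x_n(\tilde e-a)\bigr).
\]
Since $a$ is self-adjoint and $x_n\ge0$, the remainder $\E((\tilde e-a)x_n(\tilde e-a))$ is positive, so $ax_na\le\E(\tilde e x_n\tilde e)$. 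On the other hand, the hypothesis $\tilde e x_n\tilde e\le\a$ combined with positivity of $\E$ yields $\E(\tilde e x_n\tilde e)\le\a$, giving the clean intermediate bound $\|ax_na\|_\8\le\a$.

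The second key step is to define $e=\un_{[1/2,1]}(a)\in\N$ and then upgrade $\|ax_na\|_\8\le\a$ to a bound on $\|ex_ne\|_\8$. From $a(1-e)\le\tfrac12(1-e)$ one sees that $1-a\ge\tfrac12(1-e)$, so $\tau(e^\perp)\le 2\tau(1-a)\le 2C/\a$, which is the required trace bound. For the norm estimate, let $g(t)=\un_{[1/2,1]}(t)/t$ on $[0,1]$, so that $g(a)\in\N$ is positive with $a\,g(a)=g(a)\,a=e$ and $\|g(a)e\|_\8\le 2$. Since $a,g(a),e$ all commute,
\[
ex_ne = \bigl(eg(a)\bigr)(ax_na)\bigl(g(a)e\bigr),
\]
and therefore $\|ex_ne\|_\8\le\|eg(a)\|_\8^2\,\|ax_na\|_\8\le 4\a$, uniformly in $n$.

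The main obstacle is conceptual rather than computational: the naive Kadison--Schwarz inequality $\E(\tilde e x_n\tilde e)\ge \E(\tilde e)x_n\E(\tilde e)=ax_na$ is not available in this form for general $\E$, so one has to exploit the specific fact that $x_n\in\N$ via the bimodule identity to kill the cross terms. Once this is done, the passage from the soft bound $\|ax_na\|_\8\le\a$ to the projection bound on $\|ex_ne\|_\8$ is a standard functional calculus manoeuvre, at the cost of the constants $2$ in the trace and $4$ in the norm.
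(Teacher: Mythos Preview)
Your proof is correct and follows essentially the same route as the paper: set $a=\E(\tilde e)$, establish $\|ax_na\|_\8\le\a$ via the $\N$-bimodule property of $\E$, then take $e=\un_{[1/2,1]}(a)$. The only cosmetic difference is that the paper reaches the intermediate bound in one line by writing $\E(\tilde e\,x_n^{1/2})=a\,x_n^{1/2}$ (bimodule property with $x_n^{1/2}\in L_2(\N)$) and hence $\|ax_n^{1/2}\|_\8\le\|\tilde e x_n^{1/2}\|_\8=\|\tilde e x_n\tilde e\|_\8^{1/2}\le\a^{1/2}$, whereas you expand $\tilde e x_n\tilde e$ and kill the cross terms; both arguments use exactly the same ingredient.
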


\begin{proof} Let $a=\E(\tilde e)$. Then $a\in\N$ and
 $$\|ax_n^{1/2}\|_\8=\|\E(\tilde ex_n^{1/2})\|_\8\le \a^{1/2}.$$
As in the proof of Theorem \ref{le:VecRadialPhiMaxIneq}, we then see that  $e=\un_{[1/2,\, 1]}(a)$ is the desired projection in $\N$.
 \end{proof}

\noindent\emph{Proof of Theorem \ref{th:FejerPoissonPhiMaxIneq}.} We will identify the $d$-torus $\T^d$ with the cube $\mathbb I^d=[0,\,1]^d\subset\real^d$ (with $\mathbb I=[0,\,1]$) via
$(e^{2 \pi \mathrm{i} s_1},\cdots , e^{2 \pi \mathrm{i} s_d})\;\leftrightarrow\; (s_1, \cdots, s_d)$.  Accordingly, $\N_\theta=L_\8(\T^d)\overline{\ot}\T^d_\theta$ is viewed as a subalgebra of $\M_\theta=L_\8(\real^d)\overline{\ot}\T^d_\theta$; the associated conditional expectation is just the multiplication by the indication function $\un_{\mathbb I^d}$ of $\mathbb I^d$. Thus  $\wt{\T^d_\theta}$  becomes a subalgebra of $\M_\theta$ too. The corresponding conditional expectation is $\un_{\mathbb I^d}\cdot\mathbb E$, where $\mathbb E$ is the conditional expectation from $\N_\theta$ to $\wt{\T^d_\theta}$ given by Corollary~\ref{prop:TransLp}.

Now let us show the weak type $(1,1)$ inequality for the Fej\'er means.  Recall that $F_N$ is the Fej\'er kernel  on $\T^d$ given by \eqref{squareFejerKer} and that
 $$F_N(s_1, \cdots, s_d)=G_N(s_1)\cdots G_N(s_d),$$
where $G_N$ is the $1$-dimensional Fej\'er kernel. It is a well-known  elementary fact that
 $$G_N(s)\le\frac{\pi^2}2\,\frac{N+1}{1+(N+1)^2|s|^2}.$$
Thus
 $$F_N(s_1, \cdots, s_d)\lesssim\frac1{\e^d}\, \eta(\frac{s_1}\e)\cdots \eta(\frac{s_d}\e)=\eta_\e(s_1)\cdots \eta_\e(s_d),$$
 where $\eta (s) = (1 + |s|^2)^{-1}$ and  $\e=(N +1)^{-1}$. Let $x\in L_1(\T^d_\theta)$. Writing $x$ as a linear combination of four positive elements, we can assume $x\ge0$. Using transference, we have that $\tilde x\in L_1(\wt{\T^d_\theta})\subset L_1(\N_\theta)$ and
 \be \begin{split}
 \wt{F_N[x]}( s_1, \cdots, s_d)
 &=F_N*\tilde x(s_1, \cdots, s_d)\\
 & = \int_{\mathbb I^d} F_N (s_1-t_1, \cdots, s_d-t_d)\, \tilde x(t_1, \cdots, t_d) d t\\
 & =\int_{\mathbb{R}^d} F_N (s_1-t_1, \cdots, s_d-t_d) \un_{\mathbb I^d}(t_1, \cdots, t_d)\,\tilde x(t_1, \cdots, t_d) d t.
 \end{split}\ee
Therefore, we are in a situation of applying Theorem~\ref{le:VecSingularPhiMaxIneq}, so for any $\a>0$ there exists a projection $\tilde e\in\M_\theta$ such that
 $$\sup_N\|\tilde e \wt{F_N[x]}\tilde e)\|_\8\le\a\quad\text{and}\quad
 \nu(\tilde e^\perp)\lesssim\frac{ \|\un_{\mathbb I^d}\tilde x\|_{L_1(\M_\theta)}}\a=\frac{ \|x\|_{1}}\a.$$
Since $x\ge0$, $\wt{F_N[x]}\ge0$ for every $N$. Thus by Lemma~\ref{weak-sub}, we get the desired weak type $(1,1)$ inequality for $F_N$. Similarly, we show the type $(p,p)$ inequality. The same argument works equally for the square Poisson means $P_r$.

It remains  to show the part of the theorem concerning $\Phi^\e$ (which contains the circular Poisson mean $\mathbb P_r$ as a special case). We will use the convolution formula \eqref{eq:PhiFunConv}. Note that for maximal inequalities on  $\Phi^\e$ we do not need all conditions on $\Phi$ and $\f$ in \eqref{eq:PhiFuncCondition}. What we really need here is the last growth assumption on $\f$ there:
 $$|\f(s)|\le\frac{A}{(1+|s|)^{d+\delta}},\quad s\in\real^d.$$
 Then like in the proof of Theorem~\ref{le:VecRadialPhiMaxIneq} we can assume that $\f$ is nonnegative. In this case the kernel $K_\e$ is nonnegative too.  Moreover, replacing $\f$ by the function on the right hand side above, we can further suppose that $\f$ satisfies the assumption of Theorem~\ref{le:VecRadialPhiMaxIneq}.
 Now let $x\in L_1(\T^d_\theta)$. Without loss of generality, assume  again $x\ge0$. By  \eqref{eq:PhiFunConv}, for $s=(s_1, \cdots, s_d)\in \I^d$ we have
 \be\begin{split}
 \wt{\Phi^\e[x]}( s)
 &=\int_{\mathbb I^d} K_\e (s-t)\,\tilde x(t) d t\\
 &=\sum_{m \in \mathbb{Z}^d} \int_{\mathbb I^d} \varphi_{\varepsilon} (s -t + m) \,\tilde x (t) dt\\
 &= \int_{\mathbb I^d} \varphi_{\varepsilon} (s -t) \,\tilde x (t) dt
  + \sum_{m\neq\mathbf{0}} \int_{\mathbb I^d} \varphi_{\varepsilon} (s -t + m) \,\tilde x (t) dt\; .
 \end{split}\ee
The first term on the right can be dealt with in the same way as before for $F_N$:
 $$ \int_{\mathbb I^d} \varphi_{\varepsilon} (s -t) \,\tilde x (t) dt=
  \int_{\mathbb R^d} \varphi_{\varepsilon} (s -t) \un_{\mathbb I^d}(t)\,\tilde x (t) dt.$$
Then by Theorem~\ref{le:VecRadialPhiMaxIneq} for any $\a>0$ there exists a projection $\tilde e_1\in \M_\theta$ such that
 $$ \nu(\tilde e_1^\perp)\lesssim\frac{ \|x\|_{1}}\a \quad\text{and}\quad
  \big\|\tilde e_1 \big[\int_{\mathbb I^d} \varphi_{\varepsilon} (\cdot -t) \,\tilde x (t) dt\big]\tilde e_1\big\|_\8\le\a,\quad \forall\;\e>0.$$
On the other hand, for $s,t \in \mathbb I^d$ and $m\neq\mathbf0$ we have
 $$\varphi_{\varepsilon} (s -t + m) \lesssim\frac1{\e^d}\,(1+\frac{|m|}\e)^{- d - \delta}.$$
 Note that
 $$\sum_{m\not=0}\frac1{\e^d}\,(1+\frac{|m|}\e)^{- d - \delta}
 \approx\frac1{\e^d}\sum_{1\le|m|\le\e}+\e^\delta\sum_{\e<|m|}\frac1{|m|^{d +\delta}}\lesssim 1.       $$
 Hence (recalling that $x\ge0$),
 $$\sum_{m \not= 0} \int_{\mathbb I^d}\varphi_{\varepsilon} (s -t + m) \,\tilde x (t) dt
 \lesssim \sum_{m\not=0}\frac1{\e^d}\,(1+\frac{|m|}\e)^{- d - \delta} \int_{\mathbb I^d}\tilde x (t) dt
 \lesssim  \int_{\mathbb I^d} \tilde x (t) dt.$$
The last integral is an operator in $L_1(\wt{\T^d_\theta})$ and its $L_1$-norm is less than or equal to that of $x$. Thus there exists a projection $\tilde e_2\in \wt{\T^d_\theta}$ such that
 $$ \nu(\tilde e_2^\perp)\lesssim\frac{ \|x\|_{1}}\a \quad\text{and}\quad
  \big\|\tilde e_2 \big[ \int_{\mathbb I^d} \tilde x (t) dt\big]\tilde e_2\big\|_\8\le\a.$$
Let $\tilde e=\tilde e_1\vee \tilde e_2$. Then $\tilde e$ is a projection in $\M_\theta$, and combining the preceding two parts we get
 $$ \nu(\tilde e^\perp)\lesssim\frac{ \|x\|_{1}}\a \quad\text{and}\quad
  \big\|\tilde e\,\wt{\Phi^\e[x]}\,\tilde e\big\|_\8\le\a,\quad \forall\;\e>0.$$
We then deduce the weak type $(1,1)$ inequality for $\Phi^\e$ thanks to  Lemma~\ref{weak-sub}.  The type $(p,p)$ inequality is proved similarly. Therefore, the proof of Theorem \ref{th:FejerPoissonPhiMaxIneq} is complete.
 \hfill $\Box$


\section{Pointwise convergence}\label{PointwiseConvg}


In this section we apply the maximal inequalities proved in the
previous section to study the pointwise convergence of Fourier
series on quantum tori. To this end we first need an appropriate
analogue for the noncommutative setting of the usual almost
everywhere convergence. This is the almost uniform convergence
introduced by Lance \cite{Lance1976}.

Let $(x_{\lambda})_{\lambda \in \Lambda}$ be a family of elements in
$L_p(\M).$ Recall that
$(x_{\lambda})_{\lambda\in\Lambda}$ is said to converge almost
uniformly to $x,$ abbreviated as $x_{\lambda} \xrightarrow{a.u} x,$
if for every $\epsilon>0$ there exists a projection $e \in
\M$ such that
 \be \tau(1 - e) < \epsilon \quad\text{and} \quad
 \lim_{\lambda}\|( x_{\lambda} - x )e\|_{\8} =0.\ee
Also, $( x_{\lambda})_{\lambda \in \Lambda}$ is said to
converge bilaterally almost uniformly to $x ,$ abbreviated as
$x_{\lambda} \xrightarrow{b.a.u} x,$ if the limit above is replaced by
 $$ \lim_{\lambda} \|e ( x_{\lambda} - x )e\|_{\infty} = 0.$$
In the commutative case, both convergences are equivalent to the usual
almost everywhere convergence thanks to Egorov's theorem. However,
they are different in the noncommutative setting.

\begin{thm}\label{th:PWConver}
 Let $1\le p\leq\infty$ and $x \in L_p (\mathbb{T}^d_{\theta})$.
Then $F_N[x] \xrightarrow{b.a.u.} x$ as $N \to\infty$. Moreover, for $2\le p \leq \8$ the   b.a.u. convergence can
be strengthened to a.u. convergence.

Similar statements hold for the two Poisson means $P_r$, $\mathbb P_r$ as $r\to\8$ as well as for the mean $\Phi^\e$ as $\e\to0$.
 \end{thm}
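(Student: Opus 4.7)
The plan is to apply the standard noncommutative Banach principle, combining the maximal inequalities of Theorem~\ref{th:FejerPoissonPhiMaxIneq} with the trivial convergence on a dense class. For any polynomial $y=\sum_{|m|_\infty\le M}\hat y(m)U^m\in\mathcal{P}_\theta$, the Fej\'er sums stabilise ($F_N[y]=y$ once $N\ge M$), while $P_r[y]$, $\mathbb P_r[y]$, $\Phi^\e[y]$ are finite Fourier sums whose coefficients converge to those of $y$, so all four means tend to $y$ in $\|\cdot\|_\infty$. Since $\mathcal P_\theta$ is dense in $L_p(\T^d_\theta)$ for every $1\le p<\infty$ (and the $p=\infty$ case will be reduced to any finite exponent via $\T^d_\theta\subset L_p(\T^d_\theta)$, the trace being a state), this supplies the approximation base.

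For b.a.u.\ convergence of a general $x\in L_p(\T^d_\theta)$, given $\eta>0$ I would split $x=y+z$ with $y\in\mathcal P_\theta$ and $\|z\|_p<\delta$ to be tuned, reducing further to the case $z=z^*$ by decomposing into real and imaginary parts. When $1<p\le\infty$, Theorem~\ref{th:FejerPoissonPhiMaxIneq}~ii) together with Remark~\ref{rk:MaxFunct} provides $a\in L_p(\T^d_\theta)_+$ with $\|a\|_p\lesssim\|z\|_p$ and $-a\le F_N[z]\le a$ for every $N$; passing to the $L_p$-limit $F_N[z]\to z$ from Proposition~\ref{th:MeanConver} also yields $-a\le z\le a$. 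The spectral projection $e=\un_{[0,\lambda]}(a)$ then satisfies $\tau(e^\perp)\le(\|a\|_p/\lambda)^p$, together with $\sup_N\|eF_N[z]e\|_\infty\le\lambda$ and $\|eze\|_\infty\le\lambda$. Tuning $\delta$ and $\lambda$ makes $\tau(e^\perp)<\eta$ while keeping $\lambda$ arbitrarily small, and the decomposition
\[
e(F_N[x]-x)e=e(F_N[y]-y)e+eF_N[z]e-eze
\]
combined with $\|F_N[y]-y\|_\infty\to 0$ forces $\limsup_N\|e(F_N[x]-x)e\|_\infty\lesssim\lambda$. A diagonal argument over a sequence $\lambda_k\downarrow 0$ then produces a single projection witnessing b.a.u.\ convergence. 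The case $p=1$ is handled by part~i) of Theorem~\ref{th:FejerPoissonPhiMaxIneq} in the same fashion, the bound $\|eze\|_\infty\le\alpha$ being extracted as the weak-$*$ limit in $L_\infty$ of the uniformly controlled compressions $eF_N[z]e$ (which converge to $eze$ in $L_1$). The same scheme works verbatim for $P_r$, $\mathbb P_r$ and $\Phi^\e$.

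To strengthen b.a.u.\ to a.u.\ convergence when $2\le p\le\infty$, the bilateral compression $\|eF_N[z]e\|_\infty$ must be replaced by the one-sided quantity $\|F_N[z]e\|_\infty$, which requires a column version of the maximal inequality. In the range $p\ge 2$ such a column bound can be extracted from the bilateral $(p,p)$ estimate of Theorem~\ref{th:FejerPoissonPhiMaxIneq} via the standard Junge--Xu machinery of~\cite{JX2007}, whose applicability rests on $L_{p/2}(\M)$ being a genuine Banach space; once the column maximal bound is in hand, the Banach-principle argument of the previous paragraph carries over with $e(\cdots)e$ replaced by $(\cdots)e$, delivering a.u.\ convergence. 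The delicate technical point of the whole proof is precisely this bilateral-to-one-sided passage: the maximal inequalities of Theorem~\ref{th:FejerPoissonPhiMaxIneq} are intrinsically two-sided, the auxiliary exponent $p/2$ needs to be at least $1$ for the required interpolation/factorisation to apply, and for $1\le p<2$ the exponent $p/2<1$ breaks that machinery, which is the structural reason the a.u.\ statement is restricted to $p\ge 2$.
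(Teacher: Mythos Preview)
Your b.a.u.\ argument is correct and follows the paper's approach: both combine the maximal inequalities of Theorem~\ref{th:FejerPoissonPhiMaxIneq} with uniform convergence on polynomials via the standard Banach principle. The paper works entirely at $p=1$ (using $L_p\subset L_1$ for a state), but your case split is equally valid.

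The a.u.\ part, however, has a gap. Your appeal to ``Junge--Xu machinery'' hides the actual mechanism the paper uses: the bilateral-to-one-sided passage goes through Kadison's Cauchy--Schwarz inequality $(F_N[z])^2\le F_N[z^2]$, which requires $F_N$ to be a \emph{positive} map. One then applies the bilateral maximal inequality to $z^2\in L_{p/2}\subset L_1$ and reads off $\|F_N[z]e\|_\infty^2\le\|eF_N[z^2]e\|_\infty\le\varepsilon$. You correctly identify the constraint $p/2\ge 1$, but positivity is equally essential and you never invoke it. This becomes a concrete problem for $\Phi^\e$: the map $x\mapsto\Phi^\e[x]$ is \emph{not} positive in general (the kernel $\varphi$ need not be nonnegative), so your claim that the scheme works verbatim is unjustified. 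The paper handles this by reducing, as in the proof of Theorem~\ref{th:FejerPoissonPhiMaxIneq}, to the case $\varphi\ge 0$; then the periodised kernel $K_\e$ is nonnegative, the convolution map $\tilde z\mapsto K_\e*\tilde z$ on $\N_\theta$ is positive, and Kadison's inequality applies at that level to produce the required one-sided estimate.
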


\begin{proof}
 Let $x\in L_1(\mathbb{T}_{\theta}^d)$ and $\epsilon>0.$  Let $(\varepsilon_m)$ and $(\delta_m)$ be two sequences of small positive numbers. Then for each $m\geq 1$ choose  $y_m\in\A_\theta$ such that $\|x-y_m\|_1\le\delta_m$. Let $z_m=x-y_m$, so $x=y_m+z_m$.  Applying Theorem \ref{th:FejerPoissonPhiMaxIneq} to each $z_m$, we find a projection $e_m$ such that
 $$\sup_N \|e_m F_N[z_m] e_m\|_\8 \leq \varepsilon_m \quad \text{and} \quad \tau(e_m^\bot)\le C\|z_m\|_1\varepsilon_m^{-1}\le C \delta_m\varepsilon_m^{-1}.$$
The first inequality implies that
 $$ \|e_m z_m e_m\|_\8 \leq \varepsilon_m.$$
 Let $e=\bigwedge_m e_m.$ Then
  $$\tau(e^{\bot})\leq C\sum_m \delta_m \varepsilon_m^{-1}<\epsilon$$
 provided $\varepsilon_m$ and $\delta_m$ are appropriately chosen. On the other hand,
 \be\begin{split}
  \|e(F_N[x]-x)e\|_\8
 &\le  \|e(F_N[y_m]-y_m)e\|_\8 +\|eF_N[z_m]e\|_\8+\|ez_me\|_\8\\
  &\le  \|F_N[y_m]-y_m\|_\8 +2\e_m.
 \end{split}\ee
By Proposition \ref{th:MeanConver},
 $$\lim_{N\to\8} \|F_N[y_m]-y_m\|_\8=0$$
for $y_m\in\A_\theta$. It then follows that
 $$\limsup_{N\to\8}\|e(F_N[x]-x)e\|_\8\le2\e_m.$$
Whence $\lim_{N\to\8}\|e(F_N[x]-x)e\|_\8=0$. Therefore,  $F_N[x]$ converges to $x$ b.a.u. The b.a.u. convergence statements  for the other summation methods are proved exactly in the same way.

Let us turn to the a.u. convergence. Let $x\in L_2(\mathbb{T}_{\theta}^d)$  and $\epsilon>0.$ We can assume $x$ selfadjoint.  As in the preceding argument, let $x=y_m+z_m$ with $y_m\in\A_\theta$ and $\|z_m\|_2\le\delta_m$. Both $y_m$ and $z_m$ can be chosen selfadjoint. Now applying Theorem~\ref{th:FejerPoissonPhiMaxIneq} to $y_m^2$, we find a projection $e_m$ such that
$$\sup_N \|e_m F_N[z_m^2] e_m\| _\8\leq \varepsilon_m \quad \text{and} \quad
\tau(e_m^\bot)\le C\varepsilon_m^{-1} \tau(z_m^2)\le C \varepsilon_m^{-1} \delta_m^2.$$
Since the map $z\mapsto F_N[z]$ is positive, by Kadison's Cauchy-Schwarz inequality \cite{Kadison1952}, we have
 $$\big(F_N[z_m]\big)^2\le F_N[z_m^2].$$
Thus
 \beq\label{side one weak}
 \|F_N[z_m]e_m\|_\8^2\leq \|e_m F_N[z_m^2] e_m\|_\8\le \varepsilon_m.
 \eeq
Let $e=\bigwedge_m e_m$. Then $\tau(e^{\bot})\le\epsilon$ for appropriate $\e_m$ and $\delta_m$ and
 $\lim_N\|(F_N[x]-x)e\|_\8=0.$
Therefore,  $F_N[x] \xrightarrow{a.u.} x$. The proof of the corresponding statements for $P_r$ and $\mathbb P_r$ is the same.

However, a minor extra argument is required for the mean $\Phi^\e$ because the map $z\mapsto \Phi^\e[z]$ is not positive in general. So we cannot apply directly  Kadison's  inequality to this map. But what is really missing is the one-sided weak type $(1, 1)$ maximal inequality \eqref{side one weak} for $\Phi^\e$ instead of $F_N$. In order to show this latter inequality, we can assume, as in the proof of Theorem \ref{th:FejerPoissonPhiMaxIneq},  that $\f$ is nonnegative. Then  the kernel $K_\e$ in \eqref{eq:PhiFunConv} is nonnegative too. Thus the map $z\mapsto K_\e*\tilde z$ is positive, so we can apply Kadison's inequality to this map. Then as before for $F_N$, we get the desired inequality  \eqref{side one weak} with  $F_N$ replaced by $\Phi^\e$, and then deduce that   $\Phi^\e[x] \xrightarrow{a.u.} x$ as $\e\to0$.   Therefore, the theorem is completely proved.
  \end{proof}


\section{Bochner-Riesz means}\label{BRMean}


As pointed out in section~\ref{MeanConverg}, when $\alpha > (d-1)/2,$ the function $\Phi$ and
$\varphi$ associated with the Bochner-Riesz mean satisfy
\eqref{eq:PhiFuncCondition}. Therefore, by Proposition~\ref{th:MeanConver}, Theorems~\ref{th:FejerPoissonPhiMaxIneq} and \ref{th:PWConver}, we get the following

\begin{prop}\label{prop:BocherRieszMeanAbove}
Let $\alpha>(d-1)/2$ and $x\in L_p(\mathbb{T}^d_{\theta})$ with $1\le p\le\8$. Then
\begin{enumerate}[{\rm i)}]

\item $\displaystyle\lim_{R \to \8} B^{\alpha}_R[x] = x$ in $L_p (\mathbb{T}^d_{\theta})$ $($relative to the $w^*$-topology for $p=\8)$.

\item $\displaystyle\big \|{ \sup_{R>0}}^{+} B^{\alpha}_R[x] \big \|_p \lesssim\|x\|_p$ for $p>1$.

\item  $\displaystyle B^{\alpha}_R[x]\xrightarrow{b.a.u}x$ as $R \to \8.$

\end{enumerate}

\end{prop}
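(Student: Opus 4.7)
The plan is to realize the Bochner-Riesz mean $B^\alpha_R$ as a special instance of the general mean $\Phi^\e$ from Section \ref{MeanConverg} and then invoke the general results already proved. Setting $\e=1/R$ and
$$\Phi(s)=\begin{cases}(1-|s|^2)^\alpha,&|s|<1,\\ 0,&|s|\ge 1,\end{cases}$$
we have $B^\alpha_R[x]=\Phi^\e[x]$ by the very definition of $B^\alpha_R$, so that $R\to\8$ corresponds to $\e\to0$. Once we verify that this $\Phi$ satisfies \eqref{eq:PhiFuncCondition}, all three conclusions follow immediately: i) from Proposition~\ref{th:MeanConver}, ii) from Theorem~\ref{th:FejerPoissonPhiMaxIneq} ii), and iii) from Theorem~\ref{th:PWConver}.

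The verification of \eqref{eq:PhiFuncCondition} is where the hypothesis $\alpha>(d-1)/2$ enters. The normalization $\int_{\real^d}\f(s)\,ds=\Phi(0)=1$ is immediate from Fourier inversion. The decay of $\Phi$ is trivial since it is compactly supported and bounded, so $|\Phi(s)|\lesssim(1+|s|)^{-d-\delta}$ for every $\delta>0$. The decay of $\f$ uses the explicit formula
$$\f(s)=\frac{\Gamma(\alpha+1)J_{d/2+\alpha}(2\pi|s|)}{\pi^\alpha|s|^{d/2+\alpha}}$$
already recorded in the excerpt, together with the classical asymptotics $J_\lambda(t)=O(t^{-1/2})$ as $t\to\8$ and $J_\lambda(t)=O(t^\lambda)$ as $t\to0^+$ (the second handles the removable singularity at the origin). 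These yield $|\f(s)|\lesssim(1+|s|)^{-(d+1)/2-\alpha}$, and since $\alpha>(d-1)/2$ we may take $\delta=\alpha-(d-1)/2>0$, which gives $(d+1)/2+\alpha=d+\delta$, confirming the required bound.

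With \eqref{eq:PhiFuncCondition} in force, the three parts are now direct quotations of the corresponding results for $\Phi^\e$. Honestly there is no real obstacle: the only content beyond bookkeeping is the Bessel decay estimate, which is standard and produces the crucial positivity $\delta>0$ from the hypothesis $\alpha>(d-1)/2$. Note also that this scheme does \emph{not} address the more subtle range $(d-1)/2\ge\alpha>0$, for which $\Phi^\e$ lies outside the framework of Section~\ref{MeanConverg} and a separate argument (the quantum analogue of Stein's theorem announced in the introduction) will be required.
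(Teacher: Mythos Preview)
Your proposal is correct and follows exactly the same route as the paper: the paper simply observes (in the sentence preceding the proposition) that for $\alpha>(d-1)/2$ the pair $(\Phi,\varphi)$ associated with the Bochner--Riesz mean satisfies \eqref{eq:PhiFuncCondition}, and then invokes Proposition~\ref{th:MeanConver}, Theorem~\ref{th:FejerPoissonPhiMaxIneq} and Theorem~\ref{th:PWConver}. Your write-up merely spells out the Bessel-function decay verification that the paper leaves to Section~\ref{MeanConverg}.
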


If $\alpha$ is below the critical index $(d-1)/2,$ the above results usually fail even in the
scalar case, see for example  \cite[ VII.4]{SW1975}. However, we
have the following theorem, i.e., Theorem \ref{th:BocherRieszMeanBelow}, which is the noncommutative analogue of
Stein's theorem \cite{Stein1958} (see also \cite[VII.5]{SW1975}).

\begin{thm}\label{th:BocherRieszMeanBelow}
Let $1<p<\infty$ and $\alpha>(d-1)|\frac{1}{2}-\frac{1}{p}|.$ Then for any $x\in L_p(\mathbb{T}^d_{\theta})$

\begin{enumerate}[{\rm i)}]

\item $\displaystyle \big \|{\sup_{R>0}}^{+} B^{\alpha}_R[x] \big \|_p \lesssim\|x\|_p$ with the relevant constant depends only $p$, $d$ and $\a$.

\item $\displaystyle\lim_{R \to \8} B^{\alpha}_R[x] = x$ in $L_p (\mathbb{T}^d_{\theta}).$

\item $\displaystyle B^{\alpha}_R[x]\xrightarrow{b.a.u}x$ as $R \to \8.$

\end{enumerate}
 \end{thm}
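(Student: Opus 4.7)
The plan is to first derive parts (ii) and (iii) from part (i) by standard arguments, and then to tackle part (i) by adapting Stein's classical complex interpolation approach to the noncommutative maximal setting. For (ii), density of the polynomial algebra $\mathcal{P}_\theta$ in $L_p(\T^d_\theta)$ reduces the $L_p$-convergence to the case of polynomials $y$, for which $B^\alpha_R[y] \to y$ is immediate, since the coefficient $(1 - |m|_2^2/R^2)^\alpha$ tends to $1$ for each fixed $m \in \ent^d$; the type $(p,p)$ maximal inequality from (i) then handles the remainder $x - y$. For (iii), bilateral almost uniform convergence follows by the scheme in the proof of Theorem~\ref{th:PWConver}: split $x = y + z$ with $y$ a polynomial and $\|z\|_p$ small, apply the maximal inequality from (i) to $z$ (via Remark~\ref{rk:MaxFunct}) to obtain a projection $e$ with $\tau(e^\perp)$ small and $\sup_R \|e B^\alpha_R[z] e\|_\infty$ small, and use that $B^\alpha_R[y] \to y$ in operator norm.

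For part (i), I would first transfer the problem to the operator-valued setting on $\T^d$ via Corollary~\ref{prop:TransLp}, so that $\wt{B^\alpha_R[x]}$ becomes a convolution operator on $L_p(\T^d; L_p(\T^d_\theta))$. The main idea is then to introduce the analytic family $\{B^z_R\}_{R > 0}$ indexed by complex $z$, extended via the multiplier $(1-|m|_2^2/R^2)_+^z$, and to interpolate between two endpoint estimates. At the high endpoint $\mathrm{Re}(z) = \alpha_0 > (d-1)/2$, the associated convolution kernel lies in $L_1(\real^d)$ with $L_1$-norm of polynomial growth in $|\mathrm{Im}(z)|$, so Theorem~\ref{le:VecRadialPhiMaxIneq} furnishes a type $(\infty, \infty)$ maximal inequality with the same polynomial growth. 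At the low endpoint $\mathrm{Re}(z) = 0$, the goal is to prove a type $(2,2)$ maximal inequality with polynomial growth in $|\mathrm{Im}(z)|$. A noncommutative version of Stein's complex interpolation theorem on the $L_p(\M; \ell_\infty)$-scale, available through the operator-space structure of these spaces, should then deliver the desired type $(p,p)$ maximal inequality for $1 < p < \infty$ and $\alpha > (d-1)|\frac{1}{2} - \frac{1}{p}|$.

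The main obstacle is the $L_2$ endpoint maximal inequality with controlled polynomial growth in $|\mathrm{Im}(z)|$. In the commutative setting the standard tools are a Littlewood-Paley decomposition of frequency space into dyadic annuli, combined with a square function estimate that exploits the oscillation of the Bochner-Riesz kernel (the Carbery-Stein template). The noncommutative analog will likely require the noncommutative square function machinery of \cite{JLX2006} together with a careful linearization of the supremum adapted to the $L_2(\M; \ell_\infty)$-norm of Pisier-Junge. Ensuring polynomial (rather than super-polynomial) dependence on $|\mathrm{Im}(z)|$ through this analysis is the most delicate technical point, since any faster growth would destroy the interpolation and force the range of admissible $\alpha$ back up past the critical threshold.
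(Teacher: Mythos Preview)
Your overall strategy --- Stein's complex interpolation for the analytic family $\{B^z_R\}$, with (ii) and (iii) deduced from (i) --- matches the paper. But there are two concrete gaps in your treatment of (i). First, interpolating between a type $(\infty,\infty)$ estimate at $\mathrm{Re}(z) > (d-1)/2$ and a type $(2,2)$ estimate only reaches $2 \le p \le \infty$; for $1 < p < 2$ you need a type $(p_1,p_1)$ maximal estimate with $p_1$ close to $1$ at the high-order endpoint. The paper's Step~1 supplies this for \emph{every} $1 < q \le \infty$ by writing $B^{\delta+\beta}_R$ as a fractional integral of $B^\delta_R$ with $\delta > (d-1)/2$ fixed and invoking Proposition~\ref{prop:BocherRieszMeanAbove}. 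Second, and more seriously, placing the $L_2$ endpoint on the line $\mathrm{Re}(z) = 0$ asks for a maximal inequality for purely imaginary orders $B^{it}_R$, essentially the maximal operator of spherical partial sums. That is not known even classically in $d \ge 2$ (and for $d=1$ it is Carleson's theorem); the Carbery machinery you invoke does not reach that line. The correct low endpoint is any fixed line $\mathrm{Re}(z) = \alpha_0 > 0$: since the hypothesis $\alpha > (d-1)|1/2 - 1/p|$ is strict, a small enough $\alpha_0 > 0$ still closes the interpolation.

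The paper's proof of the $L_2$ estimate for $\alpha_0 > 0$ (its Step~2) is also far more elementary than what you propose --- no Littlewood-Paley decomposition, no noncommutative square-function theory from \cite{JLX2006}, and no transference to $\T^d$. It works directly in $L_2(\T^d_\theta)$ via Plancherel and the recurrence $B^{\delta+\beta}_R = C_{\beta,\delta} R^{-2(\delta+\beta)} \int_0^R (R^2-t^2)^{\beta-1} t^{2\delta+1} B^\delta_t \, dt$. One bounds $\|{\sup_R}^+ B^\alpha_R[x]\|_2$ by $\|{\sup_R}^+ M^\delta_R(x)\|_2$ with $M^\delta_R = R^{-1}\int_0^R B^\delta_r\,dr$ and $\delta > -1/2$, then compares $M^\delta_R$ to $M^{\delta+1}_R$ through the square function $G^\delta(x) = (\int_0^\infty |B^{\delta+1}_r[x] - B^\delta_r[x]|^2 \, dr/r)^{1/2}$, whose $L_2$-norm is controlled by $\|x\|_2$ through a direct Parseval computation. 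The maximal norm is linearized via duality with $L_2(\T^d_\theta;\ell_1)$ and the tracial Cauchy-Schwarz inequality $|\tau(ab)|^2 \le \tau(|a|b)\tau(|a^*|b)$ for $b \ge 0$; iterating $\delta \to \delta + 1 \to \cdots$ until the order exceeds $(d-1)/2$ and invoking Step~1 finishes the argument.
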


\begin{proof}
 The  hard part of the theorem is the maximal inequality i). Assuming this part, it is easy to show the two others. Indeed, i) implies that for any $R>0$
 $$\big\| B^{\alpha}_R [x]\big\|_p \le \big\|{\sup_{r>0}}^+ B^{\alpha}_r [x] \big\|_p\lesssim\|x\|_p,\quad \forall\; x \in L_p (\T_\theta^d).$$
Whence
 $$\sup_{R>0} \big\| B^{\alpha}_R \big\|_{L_p \to L_p} < \8.$$
Together with the density of polynomials in $L_p(\mathbb{T}^d_{\theta})$, this implies the mean convergence in ii).
The pointwise convergence iii) can be proved as Theorem \ref{th:PWConver}. The only thing to note is the fact that the type $(p,p)$ maximal inequality in i) implies the corresponding weak type $(p,p)$ inequality. The details are left to the reader.

\smallskip

The remainder of this section is devoted to the proof of i). We will follow the patten set up by Stein in the classical setting. The proof is quite technical and complicated, but essentially everything is based on two main ideas: estimate maximal function and square function by duality and interpolation.

We will frequently use the duality between $L_{p'}(\mathbb{T}^d_{\theta};\ell_1)$ and $L_{p}(\mathbb{T}^d_{\theta};\ell_\8)$ ($p'$ being the conjugate index of $p$). For the convenience of the reader we recall this duality.   $L_{p'}(\mathbb{T}^d_{\theta};\ell_1)$ is defined to be the space of all sequences $y=(y_n)$ in $L_{p'}(\mathbb{T}^d_{\theta})$ which can be decomposed as
 $$y_n=\sum_{k\geq 1}u_{kn}^{\ast}v_{kn}, \quad \forall n \geq 1$$
for two families $(u_{kn})_{k,n\geq1}$ and $(v_{kn})_{k,n\geq 1}$ in $L_{2p'}((\mathbb{T}^d_{\theta})$ such that
 $$\sum_{k,n\geq 1}u^{\ast}_{kn}u_{kn}\in L_{p'}(\mathbb{T}^d_{\theta})\;\; \text{and}\;\; \sum_{k,n\geq 1}v^{\ast}_{kn}v_{kn}\in L_{p'}(\mathbb{T}^d_{\theta}).$$
$L_{p'}(\mathbb{T}^d_{\theta};\ell_1)$ is equipped with the norm
$$\|y\|_{L_{p'}(\mathbb{T}^d_{\theta};\ell_1)}=\inf \|\sum_{k,n\geq 1}u^{\ast}_{kn}u_{kn}\|_{p'}^{1/2}\|\sum_{k,n\geq 1}v^{\ast}_{kn}v_{kn}\|_{p'}^{1/2},$$
where the infimum runs over all decompositions of $y$ as above.  It is easy to see that if $y_n\ge0$ for all $n$, then $(y_n)\in L_{p'}(\mathbb{T}^d_{\theta};\ell_1)$ iff $\sum_ny_n\in L_{p'}(\mathbb{T}^d_{\theta})$. In this case, we have
 $$\|y\|_{L_{p'}(\mathbb{T}^d_{\theta};\ell_1)}=\big\|\sum_ny_n\big\|_{p'}.$$
Let $1\leq {p'} <\infty$. Then the dual space of $L_{p'}(\mathbb{T}^d_{\theta};\ell_1)$ is $L_{p}(\mathbb{T}^d_{\theta};\ell_{\infty})$. The duality bracket is given by
 $$\la x,\;y\ra=\sum_n\tau(x_ny_n),\quad x=(x_n)\in L_{p}(\mathbb{T}^d_{\theta};\ell_\8),\;y=(y_n)\in L_{p'}(\mathbb{T}^d_{\theta};\ell_1).$$
We refer to \cite{Junge2002} and  \cite{JX2007} for more information.

\medskip

For clarity we divide the proof of i) into three steps.

\medskip\noindent {\it Step 1}. If $\alpha \in \mathbb{C}$ and $\mathrm{Re}(\alpha) >
\frac{d-1}{2},$ then for $1<p\leq\infty,$
 $$\big \|{\sup_{R>0}}^{+} B^{\alpha}_R [x ] \big \|_p \lesssim\|x\|_p,\quad \forall\; x \in  L_p(\mathbb{T}^d_{\theta}). $$
To this end, choose $\delta >0 $ and $\beta \in \mathbb{C}$ such
that $\mathrm{Re}( \alpha )> \delta> \frac{d-1}{2}$ and $\alpha =
\delta + \beta.$ We have the following identity
 \beq\label{recurrence BR}
 B^{\alpha}_R =C_{\beta, \delta} R^{-2\alpha} \int^R_0 (R^2 - t^2)^{\beta -1} t^{2\delta +1} B^{\delta}_t dt,
 \eeq
where $C_{\beta,\delta} = 2 \Gamma (\beta + \delta +1)/ [\Gamma(\delta +1) \Gamma(\beta)].$ Let $(R_n)$ be a sequence in $(0,\8)$ and $(y_n)$ an
element in the unit ball of $L_{p'} (\mathbb{T}^d_{\theta}; \el_1)$. Then, for any $x \in L_p(\mathbb{T}^d_{\theta})$ we have
 \be\begin{split}
 \Big | \tau \Big (  \sum_n B^{\alpha}_{R_n} [x] y_n \Big ) \Big |
 &=  |C_{\beta, \delta}| \Big | \sum_n R^{-2\alpha}_n \int^{R_n}_0 (R^2_n - t^2)^{\beta -1} t^{2 \delta +1} \tau \big ( B^{\delta}_t [x] y_n \big ) d t \Big |\\
 &\le|C_{\beta, \delta}|\int^1_0 |(1 - t^2)^{\beta -1} t^{2 \delta +1}| \Big |  \tau \Big ( \sum_n B^{\delta}_{t R_n} [x] y_n \Big ) \Big | d t \\
 &\le  |C_{\beta, \delta}| \int^1_0 |(1 - t^2)^{\beta -1} t^{2 \delta +1}| d t \big \| \mathrm{sup}^+_{R>0} B^{\delta}_R [x] \big \|_p\\
 & \lesssim \| x \|_p,
 \end{split}\ee
where we have used Proposition \ref{prop:BocherRieszMeanAbove} ii)
in the last inequality and the fact that
 $$ \int^1_0 |(1 -t^2)^{\beta -1} t^{2 \delta +1}| d t
 = \int^1_0 (1 -t^2)^{\mathrm{Re}(\beta) -1} t^{2 \delta +1} d t<\8$$
since $\mathrm{Re}(\beta) = \mathrm{Re} (\alpha) -\delta> 0$ and $\delta >0.$ By duality we then deduce the desired
maximal inequality.

\medskip

\noindent{\it Step 2}. If $\alpha>0,$ then
 \beq\label{eq:RieszMeanMaxInequL2}
 \big \| {\sup_{R>0}}^{+}B^{\alpha}_R [x]\big\|_2 \lesssim \|x\|_2,\quad \forall\; x \in L_2(\mathbb{T}^d_{\theta}).
 \eeq
We first consider the case of $\alpha >1/2.$ Choose $\beta > 1$ such that $\alpha = \beta +
\delta$ with $\delta > - 1/2.$ By \eqref{recurrence BR}
 \be\begin{split}
 B^{\beta + \delta}_R
 &=  - C_{\beta, \delta} R^{-2(\beta+ \delta)} \int^R_0 \Big ( \int^t_0 B^{\delta}_r d r \Big )
  \big [ (R^2 - t^2)^{\beta -1} t^{2 \delta +1} \big ]' d t\\
 &= C_{\beta, \delta} \int^1_0 \varphi (t) M^{\delta}_{R t} d t,
 \end{split}\ee
where 
 $$M^{\delta}_t = \frac{1}{t} \int^t_0 B^{\delta}_r d r\quad\text{and}\quad 
 \varphi (t)= 2 (\beta -1) (1 - t^2)^{\beta -2} t^{2 \delta + 3} - (2\delta +1) (1 - t^2)^{\beta -1} t^{2 \delta+1}. $$
Note that $\int^1_0 | \varphi (t) | d t < \8.$ We will  use the following fact that for any $(x_n) \in L_2 (\T_{\theta}^d; \el_{\8})$ one has
 $$ \big \|{\sup_n}^+ x_n \big \|_2 \approx \sup \Big \{ \Big | \sum_n \tau(x_n y_n ) \Big |\;:\;
 y_n \in L^+_2 (\T_{\theta}^d),\; \Big \| \sum_ny_n \Big \|_2 \le 1 \Big \}$$
with universal equivalence constants (see \cite{Junge2002, JX2007}).
In what follows, we fix  $x\in L_2(\mathbb{T}_{\theta}^d)$ and always assume that $(R_n)$ is a sequence in
$(0, \8)$ and $(y_n)$ a sequence of positive elements in $L_2(\T_{\theta}^d)$ with $\| \sum_n y_n \|_2 \le 1.$
Since
 \be\begin{split}
 \Big | \tau \Big (  \sum_n B^{\alpha}_{R_n} [x] y_n \Big ) \Big |
 & = |C_{\beta, \delta}| \Big | \tau \Big ( \sum_n \Big ( \int^1_0 \varphi (t) M^{\delta}_{R_n t} (x) d t \Big ) y_n \Big ) \Big |\\
 & \le |C_{\beta, \delta}| \int^1_0  | \varphi (t)|   \Big | \tau \Big ( \sum_n M^{\delta}_{R_n t} (x) y_n \Big ) \Big | dt\\
 & \lesssim \big \| \mathrm{sup}^+_{R >0} M^{\delta}_R(x) \big \|_2 \int^1_0 | \varphi (t) | d t,
 \end{split}\ee
where we have used duality in the last inequality. We then deduce that
 $$\big \| {\sup_{R>0}}^{+} B^{\alpha}_R[x ]\big\|_2 \lesssim\big \| {\sup_{R >0}}^+ M^{\delta}_R(x)\big\|_2 .$$
Now we must show that
 \beq\label{eq:RieszMeanMaxIntInequL2}
 \big \| {\sup_{R >0}}^+ M^{\delta}_R(x)\big\|_2 \lesssim \|x\|_2\quad  \text{if} \quad \delta> - 1/2.
 \eeq
To this end, we again use duality. We have
 \be\begin{split}
 \Big |\tau \Big ( \sum_n M^{\delta}_{R_n} (x) y_n \Big ) \Big |
 &\le \Big |\tau \Big ( \sum_n M^{\delta + 1}_{R_n} (x) y_n \Big ) \Big |
 + \Big |\tau \Big ( \sum_n \big [ M^{\delta+1}_{R_n} (x) - M^{\delta}_{R_n} (x) \big ] y_n \Big ) \Big |\\
 &\le \big \|{\sup_{R>0}}^{+} M^{\delta + 1}_R (x ) \big \|_2
 + \Big |\tau \Big ( \sum_n G^{\delta}_{R_n} (x) y_n \Big ) \Big |,
 \end{split}\ee
where $G^{\delta}_R (x) = M^{\delta+1}_R(x) - M^{\delta}_R(x).$
Using the following elementary inequality
 $$|\tau (ab)|^2 \le \tau(|a| b ) \tau (| a^*| b),\quad \forall\;a, b \in\T^d_{\theta}\;\text{with}\; b \ge 0,$$
we have
 $$ \Big |\tau \Big ( \sum_n G^{\delta}_{R_n} (x) y_n \Big ) \Big |^2
 \le\tau \Big ( \sum_n \big | G^{\delta}_{R_n} (x) \big | y_n \Big )
  \tau \Big ( \sum_n \big | G^{\delta}_{R_n} (x)^* \big | y_n \Big ).$$
Note that
 \be\begin{split}
 \big | G^{\delta}_R (x)\big |
 &= \Big | \frac{1}{R} \int^R_0 \big [ B^{\delta + 1}_r [x] - B^{\delta}_r [x] \big ] d r \Big |\\
 & \le \Big ( \int^R_0 \big | B^{\delta + 1}_r [x] - B^{\delta}_r [x] \big |^2 \frac{d r}{R} \Big )^{1/2}
 \le G^{\delta} (x),
 \end{split}\ee
where
 $$G^{\delta} (x) = \Big ( \int^{\8}_0 \big | B^{\delta+ 1}_r [x] - B^{\delta}_r [x] \big |^2 \frac{ d r}{r}\Big )^{1/2}.$$
It then follows that
 $$ \tau \big ( \sum_n \big | G^{\delta}_{R_n} (x)\big | y_n \big )
 \le \tau \big ( G^{\delta} (x) \sum_n y_n \big )\le \| G^{\delta} (x) \|_2  \big \| \sum_n y_n \big \|_2 \le \| G^{\delta} (x) \|_2.$$
Similarly,
 $$\tau \big(\sum_n \big| G^{\delta}_{R_n} (x)^*\big| y_n \big ) \le \| G^{\delta}_* (x) \|_2,$$
where
 $$G^{\delta}_* (x)=\Big( \int^{\8}_0 \big |\big(B^{\delta + 1}_r [x] - B^{\delta}_r [x]\big)^*\big |^2
  \frac{ d r}{r}\Big )^{1/2}.$$
Combining the preceding inequalities, we obtain
 $$\| {\sup_{R>0}}^{+} M^{\delta}_R (x) \big \|_2
 \le \big \|{\sup_{R>0}}^{+} M^{\delta + 1}_R (x) \big \|_2 + \|G^{\delta} (x) \|_2^{1/2} \| G^{\delta}_* (x) \|_2^{1/2}.$$
We now claim that
 $$\max\big(\| G^{\delta} (x) \|_2,\; \| G^{\delta}_*(x) \|_2 \big)\lesssim \| x \|_2,\quad \text{if}\quad \delta > -1/2.$$
Indeed, by Parseval's identity we have
 \be\begin{split}
 \| G^{\delta} (x) \|_2^2
 & =\int^{\8}_0  \tau \big(\big|B^{\delta + 1}_r[x]  - B^{\delta}_r[x] \big |^2\big) \frac{ d r}{r}\\
 & = \int^{\8}_0 \sum_{|m|_2 \le R} \Big | \Big ( 1 - \frac{|m|^2_2}{r^2} \Big )^{\delta +1} -
 \Big ( 1 - \frac{|m|^2_2}{r^2} \Big )^{\delta} \Big |^2|\hat x(m)|^2\frac{d r}{ r}\\
 & = \sum_{m \not= 0}|\hat x(m)|^2 \int^{\8}_{|m|_2} \frac{|m|^4_2}{r^4} \Big ( 1- \frac{|m|^2_2}{r^2} \Big )^{\delta} \frac{d r}{ r}\\
 &\lesssim\|x\|^2_2
 \end{split}\ee
because the integral
 $$\int^{\8}_{|m|_2} \frac{|m|^4_2}{r^4}\Big ( 1- \frac{|m|^2_2}{r^2} \Big )^{\delta} \frac{d r}{ r} =
 \int^{\8}_1 r^{-5} (1 - r^{-2})^{2 \delta} d r < \8$$
if $\delta > -1/2.$
In the same way, we have
 $$\| G^{\delta}_* (x) \|_2\lesssim \|x\|_2.$$
Hence our claim is proved. Consequently,
 $$\big \|{\sup_{R>0}}^{+} M^{\delta}_R(x ) \big \|_2 \lesssim\big \| {\sup_{R>0}}^{+} M^{\delta + 1}_R(x) \big \|_2 +\|x\|_2.$$
Then by iteration,  for any positive integer $k$ we have
 $$\big \| {\sup_{R>0}}^{+}M^{\delta}_R (x) \big \|_2 \lesssim \big\|{\sup_{R>0}}^{+}M^{\delta + k}_R (x) \big \|_2 + \| x \|_2. $$
Now, if we choose $k $ such that $\delta + k > (d-1)/2$, then using {\it Step 1}, we have
 $$\big \|{\sup_{R>0}}^+ M^{\delta + k}_R [x] \big \|_2\leq
 \big \| {\sup_{R>0}}^+ B^{\delta + k}_R [x] \big\|_2 \lesssim\| x \|_2.$$
Therefore, we deduce  \eqref{eq:RieszMeanMaxIntInequL2},  and hence
\eqref{eq:RieszMeanMaxInequL2} provided $\alpha > 1/2.$

We now deal with the general case of $\alpha > 0.$ Choose
$\beta > 1/2$ and $\delta > -1/2$ so that $\alpha = \beta + \delta.$
Then by \eqref{recurrence BR}
 \be\begin{split}
 B^{\beta + \delta}_R  - \frac{C_{\beta,\delta}}{C_{\beta, \delta +1}} B^{\beta + \delta +1}_R
 &=C_{\beta,\delta} R^{-2(\beta+\delta)}\Big [ \int^R_0 (R^2 - t^2)^{\beta -1} t^{2 \delta +1} B^{\delta}_t d t\\
 &\hskip .5cm - R^{-2}\int^R_0 (R^2 - t^2)^{\beta -1} t^{2 (\delta+1) +1} B^{\delta +1}_t d t\Big ]\\
 &= C_{\beta,\delta} R^{-2(\beta+\delta)}\Big [ \int^R_0 (R^2 - t^2)^{\beta -1} t^{2 \delta +1} (B^{\delta}_t- B^{\delta+1}_t )d t\\
 &\hskip .5cm + \int^R_0 (R^2 - t^2)^{\beta -1} t^{2 \delta + 1} (1 - R^{-2}t^2) B^{\delta +1}_t d t\Big ]\\
 \triangleq & {\rm I}_R + {\rm II}_R.
 \end{split}\ee
We first estimate ${\rm I}_R$. By the argument already used above
 $$ \big | \tau \big ( \sum_n {\rm I}_{R_n} (x) y_n \big )\big |^2
 \le \tau \big ( \sum_n |{\rm I}_{R_n} (x)| y_n \big ) \tau \big (\sum_n |{\rm I}_{R_n} (x)^*| y_n \big).$$
However,
 \be\begin{split}
 |{\rm I}_R (x) |
 &=|C_{\beta,\delta}| R^{-2(\beta+\delta)} \Big |\int^R_0 (R^2 - t^2)^{\beta -1} t^{2 \delta +1}
 \big(B^{\delta + 1}_t [x] - B^{\delta}_t [x] \big)d t \Big |\\
 &\le |C_{\beta,\delta}| R^{-2(\beta+\delta)} \Big ( \int^R_0 \big | (R^2 - t^2)^{\beta -1} t^{2 \delta +1}\big |^2 d t \Big )^{1/2}\\
 & \; \times R^{1/2} R^{-1/2} \Big ( \int^R_0 \big| B^{\delta + 1}_t [x] - B^{\delta}_t [x] \big|^2 d t\Big )^{1/2}\\
 &\lesssim G^{\delta} (x)
 \end{split}\ee
because the integral
 $$R^{1- 4(\beta+\delta)} \int^R_0 \big|(R^2 - t^2)^{\beta -1} t^{2 \delta +1}\big |^2 d t
 = \int^1_0 | (1 -t^2)^{\beta -1} t^{2 \delta +1} |^2 d t < \8$$
when $\beta >1/2.$ Similarly,
 $$| {\rm I}_R (x)^{\ast} | \lesssim G^{\delta}_* (x).$$
Hence, we deduce
 $$\big\| {\sup_{R>0}}^+ {\rm I}_R (x)\big\|_2 \lesssim \| G^{\delta} (x)\|^{1/2}_2 \|G^{\delta}_* (x)\|^{1/2}_2 \lesssim \|x\|_2. $$
Next, we estimate the second term ${\rm II}_R.$ Since
 \be\begin{split}
 {\rm II}_R
 & = C_{\beta,\delta} R^{-2(\beta+\delta)} \int^R_0 (R^2 - t^2)^{\beta -1} t^{2 \delta + 1} (1 - R^{-2}t^2) B^{\delta +1}_t d t\\
 & = C_{\beta,\delta} R^{-2(\beta+\delta)-2} \int^R_0 (R^2 -t^2)^{\beta} t^{2 \delta + 1} B^{\delta +1}_t d t
 \end{split}\ee
and $\beta > 1/2,$  ${\rm II}_R$ can be dealt with as $B^{\alpha}_R$ in the
case of $\alpha > 1/2$. So we conclude that
 $$\big\|{\sup_{R>0}}^+ B_R [x ]\big\|_2 \lesssim \|x\|_2.$$
Therefore, we have finally arrived at
 \be\begin{split}
 \big\|{\sup_{R>0}}^+B^{\beta + \delta}_R (x)\big\|_2
 &\le \frac{|C_{\beta,\delta}|}{|C_{\beta, \delta +1}|}
 \big\|{\sup_{R>0}}^+ B^{\beta + \delta + 1}_R [x]\big\|_2 \\
 & \hskip .5cm+ \big\|{\sup_{R>0}}^+ {\rm I}_R (x)\big\|_2 + \big\|{\sup_{R>0}}^+ {\rm II}_R [x ]\big\|_2\\
 &\lesssim \| x\|_2.
 \end{split}\ee
This completes the proof of {\it Step 2}.

\medskip\noindent{\it Step 3}. When $p$ is near $1$ or $\8,$ the announced result is
in fact already contained in {\it Step 1}. Moreover, {\it Step 2}
gives  the desired inequality in the special case of $p=2.$ The
general case can be deduced from these special ones by applying
Stein's complex interpolation. To this end, we need first a
strengthening of \eqref{eq:RieszMeanMaxInequL2} which allows the order
$\alpha$ to be complex, that is,
\beq\label{eq:RieszMeanMaxInequL2Complex}
 \big \|{\sup_{R>0}}^{+} B^{\alpha}_R [x]\big\|_2 \lesssim\|x\|_2,\quad\a\in\com,\;\mathrm{Re}(\alpha) >0.
 \eeq
This can be reduced to the case of $\a>0$ by using the argument in {\it Step 1}. We omit  the details.

Let $x \in L_p (\mathbb{T}^d_{\theta})$ with $\| x
\|_{p} < 1$ and $y =
(y_n)$ be a finite sequence in $L_{p'} (\T^d_{\theta})$  with $\| y \|_{L_{p'} (\T^d_{\theta};\el_1)} <1.$ Assume first that $p < 2.$ For any fixed $\alpha >
(d-1)(1/p - 1/2)$ we can always choose $p_1 > 1, \alpha_0 > 0$ and
$\alpha_1 > (d-1)/2$ such that \
 $$\alpha = (1-t) \alpha_0 + t\alpha_1\quad \text{and}\quad
 \frac{1}{p} = \frac{1-t}{2} +\frac{t}{p_1}$$
for some $0<t<1.$ Define
 $$ f(z) = u|x|^{\frac{p(1-z)}{2}+ \frac{p z}{p_1}},\quad z \in \mathbb{C},$$
where $x= u |x|$ is the polar decomposition of $x$.  On the other hand, by Proposition 2.5 of
\cite{JX2007}, there is a function $g =(g_n)_n$ continuous on the
strip $\{ z \in \mathbb{C}:\; 0 \le \mathrm{Re} (z) \le 1 \}$ and
analytic in the interior such that $g(t) = y$ and
 $$ \sup_{s \in \mathbb{R}} \max \big \{ \big \| g( \mathrm{i} s ) \big \|_{L_2(\T^d_{\theta};\el_1)},
 \big \| g( 1 + \mathrm{i} s ) \big\|_{L_{p_1'} (\T^d_{\theta}; \el_1)} \big \} < 1. $$
Fix a sequence $(R_n)\subset(0,\,\8)$ and $\delta>0$. We define
 $$F(z)= \exp\big(\delta(z^2 - t^2)\big )\sum_n \tau \big (B^{(1-z) \alpha_0 + z \alpha_1}_{R_n} [f(z)]g_n (z) \big ). $$
$F$ is a function analytic in the open
strip $\{z \in \mathbb{C}\;:\; 0 < \mathrm{Re} (z) < 1 \}.$ By
\eqref{eq:RieszMeanMaxInequL2Complex}, for any $s\in\real$ we have
 \be\begin{split}
 |F(\mathrm{i} s ) |
 &\le\exp\big ( -\delta( s^2 +t^2)\big) \big \| \big ( B^{\alpha_0 + \mathrm{i} s (\alpha_1-\alpha_0)}_{R_n}
 (f(\mathrm{i} s)) \big )_n \big \|_{L_2(\T^d_{\theta}; \el_{\8})}\big \| g (\mathrm{i} s) \big\|_{L_2 (\T^d_{\theta};\el_1)}\\
 &\lesssim \| f(\mathrm{i} s) \|_2\lesssim1.
 \end{split}\ee
Similarly, by {\it Step 1} we have
 $$|F(1 + \mathrm{i} s ) | \lesssim 1. $$
Therefore, by the maximum principle we get $| F(t)|  \lesssim 1$ i.e., \
 $$ \big|\tau \big ( \sum_n B^{\alpha}_{R_n} [x] y_n \big ) \big | \lesssim 1$$
if $\|x \|_{L_p(\N_{\theta})} <1.$ Then by duality  and homogeneity, we
deduce that
 $$ \big\|{\sup_{R>0}}^+ B^{\alpha}_R [x] \big\|_p\lesssim \|x\|_p, \quad \forall\; x \in  L_p (\mathbb{T}^d_{\theta}).$$
The argument for the case of $p>2$ is similar once we begin by setting $p_1 =\8.$
Thus the proof of Theorem \ref{th:BocherRieszMeanBelow} is complete.\end{proof}

\begin{rk}
The previous proof  gives a slightly more general result by allowing $\alpha$ to be complex. Namely, Theorem \ref{th:BocherRieszMeanBelow} remains true under the assumption that $\mathrm{Re} (\alpha)>(d-1)|\frac{1}{2}-\frac{1}{p}|$ with $\alpha \in\mathbb{C}$ and $1< p < \8.$
\end{rk}

\begin{rk}
 Let $\M$ be a semifinite von Neumann algebra. Then Theorem \ref{th:BocherRieszMeanBelow} admits the following analogue for the algebra $\T^d\overline{\ot}\M$ with the same proof:
Let $1<p \le \infty$ and $\mathrm{Re} (\alpha)>(d-1)|\frac{1}{2}-\frac{1}{p}|.$
Then
  $$ \big \| {\sup_{R>0}}^{+} B^{\alpha}_R [f ] \big \|_p \lesssim \| f \|_p,\quad
 \forall\; f \in L_p (\mathbb{T}^d; L_p (\M)).$$
Moreover, $B^{\alpha}_R [f ]$ converges b.a.u. to $f$ as $R\to\8$. Here
 $$
  B^{\alpha}_R[f] = \sum_{| m|_2 \leq R} \Big ( 1 -\frac{|m|_2^2}{R^2} \Big )^{\alpha} \hat{f}(m) z^m
  $$
 for $f \in L_p (\mathbb{T}^d; L_p (\M))$ with Fourier series expansion
 $$f\sim \sum_{m\in\ent^d}\hat f(m) z^m.$$
\end{rk}


\section{Fourier multipliers}\label{CBMultiplier}


It is  our intention in this section to study Fourier multipliers
in the quantum $d$-torus $\T^d_\theta$. We will compare (completely) bounded $L_p$ Fourier multipliers with those in the usual $d$-torus $\T^d$. The right framework for this investigation is the category of operator spaces.

We now recall some standard operator space notions and refer the reader to \cite{ER2000} and \cite{Pisier2003} for more information. A (concrete) operator space is a closed subspace $E$
of $\mathcal{B} (H)$ for some Hilbert space $H.$
Then $E$ inherits the matricial structure of $\mathcal{B}(
H)$ via the embedding $\mathbb{M}_n (E)
\subset \mathbb{M}_n (\mathcal{B} (H)).$ More precisely,
let $\mathbb{M}_n (E)$ denote the space of $n \times n$
matrices with entries in $E,$ equipped with the norm
induced by $\mathcal{B} (\el^n_2 (H))$.  An abstract matricial norm
characterization of operator spaces was given by Ruan. The morphisms in the category of operator spaces are
completely bounded maps. Let $H, K$ be two Hilbert
spaces. Suppose that $E \subset \mathcal{B} (H)$ and
$F \subset \mathcal{B} (K)$ are two operator
spaces. A map $u:\; E \rightarrow F$ is called
completely bounded (in short c.b.) if
 $$\sup_n\| \mathrm{id}_{\mathbb{M}_n}\ot u \|_{\mathbb{M}_n (E) \to \mathbb{M}_n(F)}<\8,$$
and the c.b. norm $\| u \|_{\mathrm{cb}}$
is defined to be the above supremum. We denote by $\mathrm{CB} (E,
F)$ the space of all c.b. maps from $E$ to
$F$,  equipped with the norm $\|\,\|_{\mathrm{cb}}.$ This is a Banach space.

For an operator space $E$ there exists a natural matricial
structure on the Banach dual $E^*$ of $E$ so that
$E^*$ becomes an operator space too. The norm of $\mathbb{M}_n
( E^* )$ is that of $\mathrm{CB} (E,
\mathbb{M}_n)$ ($\mathbb{M}_n=\mathbb{M}_n(\com)$). This is usually called the standard dual of
$E.$ We will simply say the dual of $\mathbb{E}$ since only standard duals are used in the sequel.

We will need the natural operator space structure on noncommutative $L_p$-spaces introduced by Pisier. Let $\M$ be a (semifinite) von Neumann algebra on a Hilbert space $H$. Then the embedding $\M\subset \mathcal{B} (H)$ gives to $\M$ an operator space structure.  To equip $L_1(\M)$ with an operator space structure, we view $L_1(\M)$ as the predual of the opposite algebra $\M^{\rm op}$ instead of $\M$ itself. In this way,  $L_1(\M)$ becomes a subspace of the dual operator space of $\M^{\rm op}$. This is the natural operator space structure of  $L_1(\M)$. Then for any $1<p<\8$ the operator space structure of $L_p(\M)$ is defined via the complex interpolation formula $L_p(\M)=\big(L_\8(\M),\, L_1(\M)\big)_{1/p}$.  We refer the reader to \cite{Pisier1998, Pisier2003} for more details.

We will use the following fundamental property of c.b. maps between two noncommutative $L_p$-spaces due to Pisier \cite{Pisier1998}. Let $\N$ be another (semifinite) von Nuemann algebra. Then a map $u: L_p(\M)\to L_p(\N)$ is c.b. iff ${\rm id}_{S_p}\ot u: L_p(\mathcal{B}(\el_2)\overline{\ot}\M)\to L_p(\mathcal{B}(\el_2)\overline{\ot}\N)$ is bounded. In this case, 
 $$\|u\|_{\rm cb}=\big\|{\rm id}_{S_p}\ot u: L_p(\mathcal{B}(\el_2)\overline{\ot}\M)\to L_p(\mathcal{B}(\el_2)\overline{\ot}\N)\big\|.$$
Here $S_p$ denotes the Schatten $p$-class, namely, the noncommutative $L_p$-space associated to $\mathcal{B}(\el_2)$ equipped with the usual trace. The readers who are not very familiar with operator space theory can take this property as the definition of c.b. maps between noncommutative $L_p$-spaces.

Now we turn to Fourier multipliers on quantum tori. Let $\phi = (\phi_m)_{m\in \mathbb{Z}^d}\subset\com$. We define $T_{\phi}$ by
 $$\widehat{T_{\phi} x} (m) = \phi_{m}\hat{x} (m), \; \forall m \in\mathbb{Z}^d,$$
for any polynomial $x \in \mathcal{P}_{\theta}.$ We call $\phi$ a bounded $L_p$ multiplier
(resp. c.b. $L_p$ multiplier) on the quantum torus $\mathbb{T}^d_{\theta}$ if $T_{\phi}$ extends to a bounded (resp. c.b.)
map on $L_p (\mathbb{T}^d_{\theta})$. The space of all $L_p$ multipliers (resp.
c.b. $L_p$ multipliers) on $\mathbb{T}^d_{\theta}$ is denoted
by $\mathrm{M} (L_p (\mathbb{T}^d_{\theta}))$ (resp. $\mathrm{M}_{\mathrm{cb}} (L_p
(\mathbb{T}^d_{\theta}))$), equipped with the natural norm (resp. c.b. norm). When $\theta=0$, we recover the Fourier multipliers on the usual $d$-torus $\T^d$. The corresponding multiplier spaces are denoted by $\mathrm{M} (L_p (\mathbb{T}^d))$ and  $\mathrm{M}_{\mathrm{cb}} (L_p(\mathbb{T}^d))$, respectively.

The following remark summarizes some  easily checked basic properties of quantum Fourier multipliers. We only state them for c.b. case, although all of them are equally valid for bounded multipliers.

\begin{rk}\label{prop:MuliplierElement}
 Let $1 \le p, p' \le \8$ with $\frac{1}{p}+\frac{1}{p'}=1$.

\begin{enumerate}[{\rm i)}]

\item $\mathrm{M}_{\mathrm{cb}} (L_p (\mathbb{T}^d_{\theta}))$ is a Banach algebra under pointwise multiplication.

\item $\mathrm{M}_{\mathrm{cb}} (L_p (\mathbb{T}^d_{\theta})) = \mathrm{M}_{\mathrm{cb}} (L_{p'} (\mathbb{T}^d_{\theta})).$

\item $\mathrm{M}_{\mathrm{cb}} (L_q (\mathbb{T}^d_{\theta}))\subset\mathrm{M}_{\mathrm{cb}} (L_p (\mathbb{T}^d_{\theta}))$, a contractive inclusion for $2\le p\le q\le\8$.

\item $\mathrm{M}_{\mathrm{cb}} (L_2 (\mathbb{T}^d_{\theta})) = \mathrm{M} (L_2 (\mathbb{T}^d_{\theta})) = \ell_{\infty}(\mathbb{Z}^d)$ with equal norms.    \end{enumerate}
 \end{rk}

It is well-known that in the classical case Fourier multipliers are closely related to Schur multipliers. We will exploit such a relation in the quantum case too. To this end we first recall the definition of Schur multipliers. Let $\Lambda$ be an index set. The elements of $\B(\ell_2(\Lambda))$ are represented by infinite matrices in the canonical basis of $\ell_2(\Lambda)$. A complex function $\psi=(\psi_{st})$  on $\Lambda\times\Lambda$ (or matrix indexed by $\Lambda$)  is called a bounded Schur multiplier on
$\B(\ell_2(\Lambda))$ if for every operator $a =(a_{st})\in\B(\ell_2(\Lambda))$, the matrix $(\psi_{st}a_{st})$ represents a bounded operator
on $\ell_2(\Lambda)$. We then denote
$M_{\psi}a=(\psi_{st}a_{st}).$ In this case, $M_{\psi}$ is necessarily bounded on $\B(\ell_2(\Lambda))$. More generally, for $1\le p\le\8$, if  $M_\psi$ induces a bounded map on the Schatten $p$-class $S_p(\ell_2(\Lambda))$ based on $\ell_2(\Lambda)$, we call $\psi$ a bounded Schur multiplier on  $S_p(\ell_2(\Lambda))$. Similarly, we  define the completely boundedness of $M_{\psi}$.

Fourier and Schur multipliers are linked together via  Toeplitz matrices. As usual, we represent $\mathbb{T}^d_{\theta}$ as a von Neumann algebra on
$L_2(\mathbb{T}^d_{\theta})$ by left multiplication. For every $x\in \mathbb{T}^d_{\theta},$ let $[x]$ denote the
representation matrix of $x$ on $\ell_2 (\mathbb{Z}^d)$ in the orthonormal basis $(U^m)_{m\in\ent^d}$. Namely,
 $$[x]=\big( \langle xU^n,\;U^m\rangle \big)_{m,n\in \mathbb{Z}^d}.$$
Let $\tilde{\theta}$ be the following $d\times d$-matrix deduced from the skew symmetric matrix $\theta$:
 \[\tilde{\theta}=-2\pi\begin{pmatrix}
 0 & \theta_{12} & \theta_{13} &\dots & \theta_{1d}\\
 0 & 0           & \theta_{23} &\dots & \theta_{2d}\\
 \hdotsfor{5}\\
 0 & 0           & 0           &\dots & \theta_{d-1,d}\\
 0 & 0           & 0           &\dots & 0
 \end{pmatrix}.\]
Then by the commutation relation \eqref{eq:CommuRelation}, we have
 $$xU^n=\sum_k\hat{x}(k)U^kU^n
 =\sum_k\hat{x}(k)U_1^{k_1}\cdots U_d^{k_d}U_1^{n_1}\cdots U_d^{n_d}
 =\sum_k\hat{x}(k)e^{\mathrm{i}n\tilde{\theta} k^t}U^{k+n},$$
where $n=(n_1,\dots,n_d)$, $k^t$ is the transpose of $k=(k_1,\dots,k_d)$ and $n \tilde{\theta} k^t$ denotes the
matrix product. Thus
 \begin{equation}\label{Toeplitz}
 [x]=\Big(\hat{x}(m-n)e^{\mathrm{i} n \tilde{\theta} (m-n)^t}\Big )_{m,n\in\mathbb{Z}^d}.
 \end{equation}
If $\theta=0$,   $[x]$ is a Toeplitz matrix. In the general case,  $[x]$ is a twisted Toeplitz
matrix.

For $\phi = (\phi_m)_{m \in \mathbb{Z}^d}\in\el_{\8}(\mathbb{Z}^d),$ we have
 \beq\label{F-S}
 \big[T_{\phi}x\big]=\big(\phi_{m-n}\hat{x}(m-n)e^{\mathrm{i} n\tilde{\theta}(m-n)^t}\big)_{m,n\in\mathbb{Z}^d}
 =M_{\tilde{\phi}}([x]),
 \eeq
where $\tilde{\phi}_{mn}=\phi_{m-n}.$ This is the link between the Fourier and Schur
multipliers associated to $\phi$. This link remains valid for operators $x$ in $\B(\el_2)\overline{\ot}\mathbb{T}^d_{\theta}$. In this case, the entries of the twisted Toeplitz matrix $[x]$ are operators in $\B(\el_2)$.

To illustrate the usefulness of the relationship above, let us show the following simple result.

\begin{prop}\label{prop:cbL8Multiplier}
We have
 $$\mathrm{M}_{\mathrm{cb}} (\mathbb{T}^d_{\theta})
 = \mathrm{M}_{\mathrm{cb}} (L_{\8} (\mathbb{T}^d))= \mathrm{M} (L_\8(\mathbb{T}^d))\quad \text{with equal norms}.$$
\end{prop}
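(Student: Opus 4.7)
The plan is to prove the two displayed equalities separately. The identification $\mathrm{M}_{\mathrm{cb}}(L_\8(\T^d))=\mathrm{M}(L_\8(\T^d))$ with equal norms is the general principle that any bounded linear map between commutative $C^*$-algebras is automatically c.b.\ with the same norm; concretely, for a Fourier multiplier $T_\phi$ on $\T^d$ one can write $\phi=\hat\mu$ for some $\mu\in M(\T^d)$ with $\|\mu\|=\|T_\phi\|_{L_\8\to L_\8}$ and observe that convolution by $\mu$ acts on each $L_\8(\T^d;M_n)$ with norm at most $\|\mu\|$ by the pointwise triangle inequality. The substantive content is $\mathrm{M}_{\mathrm{cb}}(\T^d_\theta)=\mathrm{M}_{\mathrm{cb}}(L_\8(\T^d))$ with equal c.b.\ norms, which I would split into two inequalities, each obtained by realizing the source algebra as an invariant subalgebra of a suitable tensor product.

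For the upper bound $\|T_\phi\|_{\mathrm{cb},\T^d_\theta}\le\|T_\phi\|_{\mathrm{cb},L_\8(\T^d)}$, I would invoke the transference of Subsection~\ref{Transference}. By Corollary~\ref{prop:TransLp}, the map $x\mapsto\tilde x$ is a complete isometric $\ast$-embedding of $\T^d_\theta$ into $\N_\theta=L_\8(\T^d)\overline{\ot}\T^d_\theta$. Since $\pi_z(U^m)=z^mU^m$, writing $\tilde x=\sum_m\hat x(m)\,z^m\ot U^m$ and comparing with $\widetilde{T_\phi x}=\sum_m\phi_m\hat x(m)\,z^m\ot U^m$ yields $\widetilde{T_\phi x}=(T_\phi\ot\mathrm{id}_{\T^d_\theta})\tilde x$, so $\widetilde{\T^d_\theta}$ is invariant under $T_\phi\ot\mathrm{id}$ and the restricted action is conjugate to $T_\phi$ on $\T^d_\theta$. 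The standard multiplicativity $\|T_\phi\ot\mathrm{id}\|_{\mathrm{cb},\N_\theta}=\|T_\phi\|_{\mathrm{cb},L_\8(\T^d)}$ of c.b.\ norms of tensor products of normal c.b.\ maps between von Neumann algebras then gives the desired bound.

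For the reverse inequality $\|T_\phi\|_{L_\8\to L_\8}\le\|T_\phi\|_{\mathrm{cb},\T^d_\theta}$, I would use a ``diagonal'' embedding into $\T^d_\theta\overline{\ot}\T^d_{-\theta}$. Set $V^m:=U^m\ot U^m$; since the commutation phase of $U^mU^n$ in $\T^d_\theta$ is the complex conjugate of the phase in $\T^d_{-\theta}$, the two phases cancel in the tensor product, and one gets $V^mV^n=V^{m+n}=V^nV^m$. Hence the von Neumann algebra $\mathcal{D}$ generated by $\{V^m:m\in\ent^d\}$ is commutative, and because $(\tau\ot\tau)(V^m)=\delta_{m,\mathbf 0}$, the GNS data identify $(\mathcal{D},\tau\ot\tau)$ with $(L_\8(\T^d),dm)$ via $V^m\leftrightarrow z^m$. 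The map $T_\phi\ot\mathrm{id}_{\T^d_{-\theta}}$ preserves $\mathcal{D}$ and acts on it by $V^m\mapsto\phi_mV^m$, which under the identification is exactly $T_\phi$ on $L_\8(\T^d)$; the same multiplicativity of c.b.\ tensor-product norms then yields $\|T_\phi\|_{L_\8\to L_\8}=\|T_\phi|_{\mathcal{D}}\|\le\|T_\phi\ot\mathrm{id}\|_{\mathrm{cb}}=\|T_\phi\|_{\mathrm{cb},\T^d_\theta}$.

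The main obstacle, and the step requiring the most care, is the identification of the diagonal subalgebra $\mathcal{D}\subset\T^d_\theta\overline{\ot}\T^d_{-\theta}$ as an untwisted copy of $L_\8(\T^d)$: one has to verify the cancellation of the opposite twists $\pm\theta$ in the product $V^mV^n$ and then match the restriction of the tensor trace on the diagonal generators with Haar measure. Once this tracial von Neumann algebra isomorphism is in hand, the rest of the argument is routine bookkeeping with c.b.\ norms of tensor products and restrictions to invariant subalgebras.
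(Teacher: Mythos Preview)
Your proof is correct and takes a genuinely different route from the paper's.

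The paper does not argue via tensor products of two quantum tori. Instead it passes through Schur multipliers on $\B(\ell_2(\ent^d))$: using the (twisted) Toeplitz representation $x\mapsto[x]$ in \eqref{Toeplitz}--\eqref{F-S}, it shows that $T_\phi$ is c.b.\ on $\T^d_\theta$ if and only if the Toeplitz Schur multiplier $M_{\tilde\phi}$ (with $\tilde\phi_{mn}=\phi_{m-n}$) is c.b.\ on $\B(\ell_2(\ent^d))$, and with equal c.b.\ norms. One implication is just restriction to the image $\Gamma_\8$ of $\T^d_\theta$; the converse is obtained by conjugating $a\ot 1_{\T^d_\theta}$ by the diagonal unitary $V=\mathrm{diag}(\cdots,U^n,\cdots)$ inside $\B(\ell_2(\ent^d))\overline\ot\T^d_\theta$. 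Specializing to $\theta=0$ gives the same equivalence for $L_\8(\T^d)$, whence $\mathrm{M}_{\mathrm{cb}}(\T^d_\theta)=\mathrm{M}_{\mathrm{cb}}(L_\8(\T^d))$; the equality with $\mathrm{M}(L_\8(\T^d))$ is then handled as you do, via $\phi=\hat\mu$.

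Your two embeddings play the role of the paper's two implications: your transference step $\T^d_\theta\hookrightarrow L_\8(\T^d)\overline\ot\T^d_\theta$ replaces the restriction $M_{\tilde\phi}|_{\Gamma_\8}$, and your diagonal embedding $L_\8(\T^d)\hookrightarrow\T^d_\theta\overline\ot\T^d_{-\theta}$ replaces the conjugation by $V$. The two arguments are in fact cousins: the commutant of $\T^d_\theta$ acting by left multiplication on $L_2(\T^d_\theta)$ is a copy of $\T^d_{-\theta}$, so the paper's unitary $V$ is essentially your $\T^d_{-\theta}$ tensor factor in disguise. What the paper's route buys is the explicit Fourier--Schur transference $T_\phi\leftrightarrow M_{\tilde\phi}$, which is reused in the proof of Theorem~\ref{th:cbLpMultiplier} and singled out in the remark following it. Your route is more self-contained for this proposition alone and avoids introducing Schur multipliers, at the cost of not setting up that machinery for later use. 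One small point worth making explicit in your write-up: to tensor $T_\phi$ with $\mathrm{id}_{\T^d_{-\theta}}$ on the von Neumann tensor product you need $T_\phi$ normal, which follows from the duality $\mathrm{M}_{\mathrm{cb}}(L_\8)=\mathrm{M}_{\mathrm{cb}}(L_1)$ in Remark~\ref{prop:MuliplierElement}(ii), or alternatively from embedding $\T^d_{-\theta}\subset\B(\ell_2(\ent^d))$ and using Pisier's characterization of c.b.\ maps on noncommutative $L_p$ recalled in the paper.
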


\begin{proof}
 The argument below is standard. Let  $\Gamma_{\infty}$ denote the subspace of  $\B(\ell_2(\mathbb{Z}^d))$  consisting of all twisted Toeplitz matrices of the form \eqref{Toeplitz}.
 By the preceding discussion, for any $x\in \mathbb{T}^d_{\theta}$ we have
$$\|T_{\phi}(x)\|_{\infty}=\|T_{\phi}(x)\|_{\B(L_2(\mathbb{T}^d_{\theta}))}=\|[T_{\phi}(x)]\|_{\B(\ell_2(\mathbb{Z}^d))}
=\|M_{\tilde{\phi}}[x]\|_{\B(\ell_2(\mathbb{Z}^d))}.$$
Consequently,
 $$T_{\phi}\; \text{is bounded} \; \text{on}\; \mathbb{T}^d_{\theta}\; \Longleftrightarrow\;
 M_{\tilde{\phi}}\big|_{\Gamma_{\infty}}: \Gamma_{\infty}\to \Gamma_{\infty}\; \text{is bounded}.$$
Moreover, in this case,
 $$ \|T_{\phi}\|=\big\|M_{\tilde{\phi}}\big|_{\Gamma_{\infty}}\big\|. $$
Considering the vector-valued case where $x\in\B(\el_2)\overline{\ot} \mathbb{T}^d_{\theta}$, we get the c.b. analogue of the above equivalence:
  $$T_{\phi}\; \text{is c.b.} \; \text{on}\; \mathbb{T}^d_{\theta}\; \Longleftrightarrow\;
 M_{\tilde{\phi}}\big|_{\Gamma_{\infty}}\; \text{is c.b.} \; \text{on}\; \Gamma_{\infty}
 \quad\text{and}\quad \|T_{\phi}\|_{\rm cb}=\big\|M_{\tilde{\phi}}\big|_{\Gamma_{\infty}}\big\|_{\rm cb}. $$
Thus, if $M_{\tilde{\phi}}$ is c.b. on $\B(\ell_2(\mathbb{Z}^d)),$ then
$M_{\tilde{\phi}}\big|_{\Gamma_{\infty}}$ is c.b. on $\Gamma_{\infty}$,  so is
$T_{\phi}$ on $\mathbb{T}^d_{\theta}.$

Conversely,  suppose $\phi\in \mathrm{M}_{\mathrm{cb}} (
\mathbb{T}^d_{\theta}).$  Let
$V= \mathrm{diag} (\cdots,U^n,\cdots)$ be the diagonal matrix with diagonal
entries $(U^n)_{n\in \mathbb{Z}^d}$.  $V$ is a unitary operator in
$\B(\ell_2(\mathbb{Z}^d))\overline{\otimes}\mathbb{T}^d_{\theta}$. For any $a=(a_{mn})_{m,n\in
\mathbb{Z}^d}\in\B (\ell_2(\mathbb{Z}^d)),$  let  $x=V(a\ot 1_{\mathbb{T}^d_{\theta}})V^*\in \B(\ell_2(\mathbb{Z}^d))\overline{\otimes}\mathbb{T}^d_{\theta}$, where $1_{\mathbb{T}^d_{\theta}}$ denotes the unit of $\T^d_\theta$. Then
 $$x=(U^ma_{mn}U^{-n})_{m,n \in\mathbb{Z}^d}
 =\sum_{m,n}a_{mn}e_{mn}\otimes U^mU^{-n}
 =\sum_{m,n}a_{mn}e_{mn}\otimes e^{- \mathrm{i} n\tilde{\theta}m^{t}}U^{m-n},$$
where $(e_{mn})$ are the canonical matrix units of $\B(\ell_2(\mathbb{Z}^d)).$ Since $V$ is unitary, we have
 $$\|x\|_{\B(\ell_2(\mathbb{Z}^d))\overline{\otimes}\mathbb{T}^d_{\theta}}=\|a\|_{\B(\ell_2(\mathbb{Z}^d))}.$$
On the other hand,
 $$(\mathrm{id}_{\B(\ell_2(\mathbb{Z}^d))}\otimes T_{\phi} ) (x)
 =\sum_{m,n}\phi_{m-n}a_{mn}e_{mn}\otimes
 e^{-\mathrm{i} n\tilde{\theta}m^{t}}U^{m-n}=V (M_{\tilde{\phi}}(a) \ot 1_{\mathbb{T}^d_{\theta}})V^{*}.$$
It then follows that
  \be\begin{split}
 \|M_{\tilde{\phi}}(a)\|_{\B(\ell_2(\mathbb{Z}^d))}
 &=\|(\mathrm{id}_{\B(\ell_2(\mathbb{Z}^d))}\otimes T_{\phi} ) (x)\|_{\B(\ell_2(\mathbb{Z}^d))\overline{\otimes}\mathbb{T}^d_{\theta}}\\
 &\leq \|T_{\phi}\|_{\mathrm{cb}}\|x\|_{\B(\ell_2(\mathbb{Z}^d))\overline{\otimes}\mathbb{T}^d_{\theta}}
 =\|T_{\phi}\|_{\mathrm{cb}}\|a\|_{\B(\ell_2(\mathbb{Z}^d))}.
 \end{split}\ee
Therefore, $\tilde{\phi}$ is a bounded Schur multiplier on $\B(\ell_2(\mathbb{Z}^d)).$
Considering matrices $a=(a_{mn})_{m,n\in\mathbb{Z}^d}$ with entries in $\B(\el_2)$, i.e.,
$a=(a_{mn})_{m,n\in\mathbb{Z}^d}\in\B(\el_2)\overline{\ot}\B (\ell_2(\mathbb{Z}^d)),$ we show in the same way that  $M_{\tilde{\phi}}$ is c.b. on
$\B(\ell_2(\mathbb{Z}^d))$, so $\tilde{\phi}$ is a c.b. Schur multiplier on $\B(\ell_2(\mathbb{Z}^d))$ and $\|M_{\tilde{\phi}}\|_{\rm cb}\le \|T_{\phi}\|_{\mathrm{cb}}$.

In summary, we have proved that
 $$T_{\phi}\;\text{is c.b. on}\;\;\mathbb{T}^d_{\theta}\; \Longleftrightarrow \;
 M_{\tilde{\phi}}\;\text{is c.b. on}\;\; \B(\ell_2(\mathbb{Z}^d)).$$
Applying this result to the commutative case ($\theta=0$), we get that
 $$T_{\phi}\; \text{is c.b. on}\;\; L_{\infty}(\mathbb{T}^d)\; \Longleftrightarrow\;
 M_{\tilde{\phi}}\;\text{is  c.b. on}\;\; \B(\ell_2(\mathbb{Z}^d)).$$
Therefore,
 $$\mathrm{M}_{\rm cb}(\T^d_\theta)=\mathrm{M}_{\rm cb}(L_\8(\T^d))\quad\text{with equal norms}.$$
However, it is well known that a Fourier multiplier $\phi$ is bounded on $L_\8(\T^d)$ iff it is the Fourier transform of a bounded Borel measure $\mu$ on $\T^d$. In this case, $T_\phi$ is the convolution operator by $\mu$ and its norm is equal to $\|\mu\|$. Then it is easy to check that $T_\phi$ c.b. on $L_\8(\T^d)$. Thus
 \beq\label{cb=b}
 \mathrm{M}_{\rm cb}(L_\8(\T^d))=\mathrm{M}(L_\8(\T^d))\quad\text{with equal norms}.
 \eeq
Combining the preceding results, we deduce the announced assertion. \end{proof}

The main result of this section is the following theorem, which extends the first equality in the previous proposition to all $1\le p\le\8$. We point out that the inclusion
$\mathrm{M}_{\mathrm{cb}} (L_{p} (\mathbb{T}^d))\subset\mathrm{M}_{\mathrm{cb}} (L_p (\mathbb{T}^d_{\theta}))$ was proved independently by  Junge, Mei and Parcet \cite{JMP2011}.

\begin{thm}\label{th:cbLpMultiplier}
 Let $1 < p < \8$. Then  $\mathrm{M}_{\mathrm{cb}} (L_p (\mathbb{T}^d_{\theta}))
 =\mathrm{M}_{\mathrm{cb}} (L_{p} (\mathbb{T}^d))$ with equal norms.
\end{thm}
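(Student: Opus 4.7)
My strategy is to route both c.b.\ norms through the c.b.\ Schur multiplier norm of $M_{\tilde\phi}$ on $S_p(\el_2(\ent^d))$, where $\tilde\phi_{mn}=\phi_{m-n}$ as in \eqref{F-S}. Since $\tilde\phi$ does not involve $\theta$, this intermediate quantity is manifestly $\theta$-independent. Concretely, I aim to establish the three estimates (A)~$\|T_\phi\|_{\mathrm{cb}(L_p(\T^d_\theta))}\le\|T_\phi\|_{\mathrm{cb}(L_p(\T^d))}$; (B)~$\|T_\phi\|_{\mathrm{cb}(L_p(\T^d))}\le\|M_{\tilde\phi}\|_{\mathrm{cb}(S_p(\el_2(\ent^d)))}$; and (C)~$\|M_{\tilde\phi}\|_{\mathrm{cb}(S_p(\el_2(\ent^d)))}\le\|T_\phi\|_{\mathrm{cb}(L_p(\T^d_\theta))}$. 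Chaining them forces equality of all four quantities and yields the theorem.

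For (A) I would invoke the transference of Section~\ref{Transference}. The identity $\tilde x(z)=\sum_m\hat x(m)z^mU^m$ shows that, under the isometric embedding $x\mapsto\tilde x$ of $L_p(\T^d_\theta)$ into $L_p(\N_\theta)\cong L_p(\T^d;L_p(\T^d_\theta))$, the multiplier $T_\phi$ is intertwined with the classical Fourier multiplier $T_\phi^{\T^d}$ acting in the $z$-variable with values in $L_p(\T^d_\theta)$; explicitly $\widetilde{T_\phi x}=T_\phi^{\T^d}\tilde x$. By hypothesis $T_\phi^{\T^d}$ is c.b.\ on $L_p(\T^d)$, so its tensor extension $T_\phi^{\T^d}\otimes\mathrm{id}_{L_p(\T^d_\theta)}$ is bounded on $L_p(\T^d;L_p(\T^d_\theta))$ with norm at most $\|T_\phi\|_{\mathrm{cb}(L_p(\T^d))}$. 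Amplifying the construction by tensoring $\T^d_\theta$ with $\B(\el_2)$ (equivalently, passing to $S_p[L_p(\T^d_\theta)]$) and using Pisier's characterization of the c.b.\ norm recalled above delivers (A).

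For (C) I plan to upgrade the $V$-conjugation argument from the proof of Proposition~\ref{prop:cbL8Multiplier} to the $L_p$-level. With $V=\mathrm{diag}(U^n)_{n\in\ent^d}$ a unitary in $\B(\el_2(\ent^d))\overline{\otimes}\T^d_\theta$, the map $a\mapsto V(a\otimes 1_{\T^d_\theta})V^*$ is an isometry from $S_p(\el_2(\ent^d))$ into $L_p(\B(\el_2(\ent^d))\overline{\otimes}\T^d_\theta)=S_p(\el_2(\ent^d))[L_p(\T^d_\theta)]$, and the identity $(\mathrm{id}\otimes T_\phi)(V(a\otimes 1)V^*)=V(M_{\tilde\phi}(a)\otimes 1)V^*$ (already verified at the operator level in Proposition~\ref{prop:cbL8Multiplier}) gives $\|M_{\tilde\phi}(a)\|_{S_p}\le\|T_\phi\|_{\mathrm{cb}(L_p(\T^d_\theta))}\|a\|_{S_p}$. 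Replacing $a$ by a matrix with entries in $S_p$ (i.e.\ amplifying once more by $\B(\el_2)$) upgrades this to the c.b.\ version.

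The main obstacle will be (B), a Bozejko--Fendler type transference for the amenable group $\ent^d$. The plan is to apply the $V$-conjugation argument in reverse at $\theta=0$: given a trigonometric polynomial $f\in L_p(\T^d)$ of degree $\le N$, one embeds the ``block'' of the Toeplitz matrix $[f]$ indexed by a F{\o}lner cube $\{-N,\dots,N\}^d\subset\ent^d$ into $S_p[L_p(\T^d)]$ so that, after normalization by the F{\o}lner factor, its norm approaches $\|f\|_{L_p(\T^d)}$ as $N\to\infty$. The same averaging applied to $M_{\tilde\phi}([f])=[T_\phi f]$ then converts the c.b.\ Schur multiplier action into the c.b.\ Fourier multiplier action, establishing (B). Combining (A)--(C) closes the cycle $\|T_\phi\|_{\mathrm{cb}(L_p(\T^d))}\le\|M_{\tilde\phi}\|_{\mathrm{cb}(S_p)}\le\|T_\phi\|_{\mathrm{cb}(L_p(\T^d_\theta))}\le\|T_\phi\|_{\mathrm{cb}(L_p(\T^d))}$, forcing all four terms to coincide.
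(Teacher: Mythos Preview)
Your proposal is correct and uses the same three ingredients as the paper: transference for (A), the $V$-conjugation argument for (C), and a F{\o}lner averaging argument for (B). The paper organizes these slightly differently---it builds explicit completely positive, trace-preserving maps $A_N:\T^d_\theta\to\B(\el_2^{|Z_N|})$ and $B_N:\B(\el_2^{|Z_N|})\to\T^d_\theta$ with $B_N\circ A_N\to\mathrm{id}$ and $A_N\circ T_\phi=M_{\tilde\phi}\circ A_N$, thereby proving the equivalence $T_\phi$ c.b.\ on $L_p(\T^d_\theta)\Leftrightarrow M_{\tilde\phi}$ c.b.\ on $S_p(\el_2(\ent^d))$ for \emph{every} $\theta$ at once, then specializes to $\theta=0$---but this is just your cycle (A)--(B)--(C) repackaged, and the F{\o}lner step you sketch for (B) is precisely what the maps $A_N,B_N$ formalize (and is also what the cited Neuwirth--Ricard transference provides).
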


\begin{proof}
  The inclusion $\mathrm{M}_{\mathrm{cb}} (L_{p} (\mathbb{T}^d))\subset\mathrm{M}_{\mathrm{cb}} (L_p (\mathbb{T}^d_{\theta}))$ can be easily proved by transference. Indeed, let $\phi\in \mathrm{M}_{\mathrm{cb}} (L_{p} (\mathbb{T}^d))$, and let $x\in L_p(\B(\el_2)\overline{\ot}\mathbb{T}^d_{\theta})$ be a polynomial in $U$:
    $$x=\sum_{m\in\ent^d} \hat x(m)\ot U^m,$$
where only a finite number of coefficients $\hat x(m)$  are nonzero operators in $S_p$. Let
 $$\tilde x(z)=\sum_{m\in\ent^d} \hat x(m)\ot U^mz^m,\quad z\in\T^d.$$
Then $\tilde x\in L_p(\T^d; L_p(\B(\el_2)\overline{\ot}\mathbb{T}^d_{\theta}))$ and
 $$T_\phi(\tilde x)=\wt{T_\phi(x)},$$
where the first $T_\phi$ is viewed as a multiplier on $\T^d$ and the second on $\T^d_\theta$. Recall that $\mathbb{T}^d_{\theta}$ is hyperfinite, so the algebra $\B(\el_2)\overline{\ot}\mathbb{T}^d_{\theta}$ can be approximated by matrix algebras. Therefore, the complete boundedness of $\T_\phi$ on $L_p(\T^d)$ implies
 $$\big\| \wt{T_\phi(x)}\big\|_{L_p(\T^d; L_p(\B(\el_2)\overline{\ot}\mathbb{T}^d_{\theta}))}
 \le\|\phi\|_{\mathrm{M}_{\mathrm{cb}} (L_{p} (\mathbb{T}^d))}\|\tilde x\|_{L_p(\T^d; L_p(\B(\el_2)\overline{\ot}\mathbb{T}^d_{\theta}))}.$$
However,  by Corollary \ref{prop:TransLp}
 $$\big\| \wt{T_\phi(x)}\big\|_{L_p(\T^d; L_p(\B(\el_2)\overline{\ot}\mathbb{T}^d_{\theta}))}
 =\|T_\phi(x)\|_{L_p(\B(\el_2)\overline{\ot}\mathbb{T}^d_{\theta})}$$
 and
 $$\|\tilde x\|_{L_p(\T^d; L_p(\B(\el_2)\overline{\ot}\mathbb{T}^d_{\theta}))}
 =\|x\|_{L_p(\B(\el_2)\overline{\ot}\mathbb{T}^d_{\theta})}.$$
Thus
 $$\|T_\phi(x)\|_p\le \|\phi\|_{\mathrm{M}_{\mathrm{cb}} (L_{p} (\mathbb{T}^d))}\|x\|_p.$$
Whence $T_\phi$ is c.b., so $\phi\in\mathrm{M}_{\mathrm{cb}} (L_p (\mathbb{T}^d_{\theta}))$  and
$\|\phi\|_{\mathrm{M}_{\mathrm{cb}} (L_p (\mathbb{T}^d_{\theta}))}\le \|\phi\|_{\mathrm{M}_{\mathrm{cb} }(L_p (\mathbb{T}^d))}$.

For the converse inclusion, note that the argument in the second part of the proof of Proposition~\ref{prop:cbL8Multiplier} works  equally at the level of $L_p$-spaces. Thus we get  that
 $$T_{\phi}\;\text{is c.b. on}\;\;L_p(\mathbb{T}^d_{\theta})\; \Longrightarrow \;
 M_{\tilde{\phi}}\;\text{is c.b. on}\;\; S_p(\ell_2(\mathbb{Z}^d)).$$
Then using Neuwirth and Ricard's transference theorem \cite{NR2011}, we deduce that $T_\phi$ is c.b. on $L_p(\T^d)$, so $\mathrm{M}_{\mathrm{cb}} (L_p (\mathbb{T}^d_{\theta}))\subset \mathrm{M}_{\mathrm{cb} }(L_p (\mathbb{T}^d))$ contractively.

However, for reason of completeness, we include a self-contained proof in the spirit of the proof of Proposition~\ref{prop:cbL8Multiplier} by adapting Neuwirth and Ricard's argument  to the present setting of twisted Toeplitz matrices. Moreover, this proof does not need the first part above.
 Let
 $$Z_N=\{-N,\dots,-1,0,1,\dots,N\}^d\subset\mathbb{Z}^d.$$
$(Z_N)$ is a F{\o}lner sequence of $\mathbb{Z}^d,$ that is,
 $$\lim_{N\rightarrow\infty}\frac{|Z_N\triangle(Z_N+n)|}{|Z_N|}=0,\quad \forall n\in \mathbb{Z}^d.$$
Define two maps $A_N$ and $B_N$ as follows:
 $$A_N:\; \mathbb{T}^d_{\theta}\to\B(\ell_2^{|Z_N|}) \quad \text{with}\quad x\mapsto P_N([x]),$$
where $P_N: \; \B(\ell_2(\mathbb{Z}^d))\rightarrow \B(\ell_2^{|Z_N|})$ with $(a_{mn})\mapsto
(a_{mn})_{m,n\in Z_N}$. And
 $$B_N:\; \B(\ell_2^{|Z_N|})\to \mathbb{T}^d_{\theta}\quad \text{with}\quad
  e_{mn}\mapsto \frac{1}{|Z_N|}e^{-\mathrm{i} n \tilde{\theta} (m-n)^t}U^{m-n}.$$
Here $\B(\ell_2^{|Z_N|})$ is endowed with the normalized trace. It is easy to check that both $A_N, B_N$ are  unital, completely positive
and trace preserving. Consequently, $A_N$  extends to a complete contraction from
$L_p(\mathbb{T}^d_{\theta})$ into $L_p(\B(\ell_2^{|Z_N|})),$ while $B_N$ a complete contraction from $L_p(\B(\ell_2^{|Z_N|}))$
into $L_p(\mathbb{T}^d_{\theta}).$

We now claim that
 $\lim_{N\rightarrow\infty}B_N\circ A_N(x)=x$ in
$L_p(\mathbb{T}^d_{\theta})$ for any $x\in
L_p(\mathbb{T}^d_{\theta}).$ It suffices to consider a monomial $x=U^k.$ Then
 $$A_N(U^{k})=\big(e^{\mathrm{i} n \tilde{\theta} (m-n)^t} \big)_{m,n\in Z_N,m-n=k},$$
which implies
 \be\begin{split}
 B_N\circ A_N(U^k)
 &=\frac{1}{|Z_N|}\sum_{m,n\in Z_N,m-n=k}U^{m-n}
 e^{-\mathrm{i} n \tilde{\theta}(m-n)^t} e^{i n \tilde{\theta}(m-n)^t}\\
 &=\frac{\big|Z_N\cap (Z_N+k) \big|}{|Z_N|}\,U^k.
 \end{split}\ee
Then by the F${\o}$lner property of
$Z_N$, we deduce that  $\lim_N B_N\circ A_N (U^k)=U^k$ in
$L_p(\mathbb{T}^d_{\theta}).$ So  the claim is proved.

Now assume that the Schur multiplier $M_{\tilde{\phi}}$ is $\mathrm{c.b.}$ on
$S_p(\ell_2(\mathbb{Z}^d))$. We want to prove that $T_{\phi}$ is
$\mathrm{c.b.}$ on $L_p(\mathbb{T}^d_{\theta}).$  For any $x\in L_p(\B(\ell_2)\overline{\otimes}\mathbb{T}^d_{\theta}),$
 $$\|{\rm id}\ot T_{\phi}(x)\|_{L_p(\B(\ell_2)\overline{\otimes}\mathbb{T}^d_{\theta})}
 =\lim_{N}\|\big({\rm id}\ot B_N\big)\circ\big({\rm id}\ot A_N\big)
 \big({\rm id}\ot T_{\phi}(x)\big)\|_{L_p(\B(\ell_2)\overline{\otimes}\mathbb{T}^d_{\theta})}.$$
Using \eqref{F-S}, we see that  ${\rm id}\ot A_N({\rm id}\ot T_{\phi}(x))={\rm id}\ot M_{\tilde{\phi}}({\rm id}\ot A_N(x)).$ Thus
 \be\begin{split}
 \|{\rm id}\ot T_\phi(x)\|_{L_p(\B(\ell_2)\overline{\otimes}\mathbb T^d_\theta)}
 &\le \limsup_N\|{\rm id}\ot M_{\tilde\phi}({\rm id}\ot A_N(x))\|_{L_p(\B(\ell_2)\overline{\otimes}\B(\ell_2^{|Z_N|}))}\\
 &\leq \limsup_N \|M_{\tilde\phi}\|_{\mathrm{cb}}\|{\rm id}\ot A_N(x)\|_{L_p(\B(\ell_2)\overline{\otimes}\B(\ell_2^{|Z_N|}))}\\
 & \leq \|M_{\tilde\phi}\|_{\mathrm{cb}}\|x\|_{L_p(\B(\ell_2)\overline{\otimes}\mathbb{T}^d_\theta)}.
 \end{split}\ee
This implies that $T_{\phi}$ is c.b. on $L_p(\mathbb{T}^d_{\theta})$ and $\|T_\phi\|_{\rm cb}\le \|M_{\tilde\phi}\|_{\mathrm{cb}}$, as desired.

In summary, we have proved that
 $$T_{\phi}\;\text{is c.b. on}\;\;L_p(\mathbb{T}^d_{\theta}) \;\Longleftrightarrow\;
 M_{\tilde{\phi}}\;\text{is c.b. on}\;\; S_p(\ell_2(\mathbb{Z}^d)).$$
Applying this result to the case of $\theta=0$, we get that
 $$T_{\phi}\; \text{is c.b. on}\;\; L_p(\mathbb{T}^d) \;\Longleftrightarrow\;
 M_{\tilde{\phi}}\;\text{is  c.b. on}\;\; S_p(\ell_2(\mathbb{Z}^d)).$$
Therefore,
 $$\mathrm{M}_{\mathrm{cb}} (L_p (\mathbb{T}^d_{\theta}))
 =\mathrm{M}_{\mathrm{cb}} (L_{p} (\mathbb{T}^d))\quad \text{with equal norms}.$$
Thus the theorem is proved.
 \end{proof}

\begin{rk}
 The preceding proof shows that $\phi$ is a c.b. Fourier multiplier on $L_p(\T^d_\theta)$ iff $\tilde\phi$ is a c.b. Schur multiplier on $S_p(\el_2(\ent^d))$. This is the extension of Neuwirth and Ricard's transference result to twisted Toeplitz matrices. We will pursue this subject elsewhere for more general groups.
 \end{rk}

\begin{rk}
 It would be interesting to study thin sets on $\T^d_\theta$, for instance, $\Lambda(p)$-sets and Sidon sets. At the level of complete boundedness, Theorem~\ref{th:cbLpMultiplier}  shows that the $\Lambda(p)_{\rm cb}$-sets on $\T^d_\theta$ are exactly those on $\T^d$. We refer to Harcharras' thesis \cite{Har1999} for related results.
 \end{rk}

Theorem~\ref{th:cbLpMultiplier} suggests the following problem:

\begin{problem}
 Let $2<p\le\8$. Does one have
 $$\mathrm{M}(L_p(\mathbb{T}_{\theta}^d))=\mathrm{M}(L_p(\mathbb{T}^d))\,?$$
 \end{problem}

We conjecture that the answer would be negative. Indeed, it is negative in the case of $p=\8$ if one allows the number of generators to be infinite, as shown by the following remark that is communicated to us by Eric Ricard.

\begin{rk}
 Let $\theta=(\theta_{kj})$ be the infinite skew matrix such that $\theta_{kj}=1/2$ for all $k<j$. Let $\T^\8_\theta$ be the associated quantum torus. Now the generators of $\T^\8_\theta$ is a sequence $U=(U_1, U_2, \cdots)$ of anticommuting unitary operators:
  $$U_kU_j=-U_jU_k,\quad\forall\; k\neq j.$$
Let $\phi$ be the indicator function of the subset $\Lambda=\{e_k\;:\;k\ge1\}$ of $\ent^\8$, where $e_k$ is the element of  $\ent^\8$ whose coordinates all vanish except the one on the k-th position which is equal to $1$. Then $\phi\in\mathrm{M}(L_\8(\mathbb{T}_{\theta}^\8))$ but $\phi\not\in\mathrm{M}(L_\8(\mathbb{T}^\8))$.
 \end{rk}

Let us check this remark. Let $\a=(\a_k)\subset\com$ be a finite sequence and set
 $$x=\sum_k\a_k U_k.$$
Then by the anticommuting relation we have
 $$x^*x+xx^*=2\sum_k|\a_k|^2+\sum_{j\neq k}\bar\a_j\a_k(U_j^*U_k+U_kU_j^*)=2\sum_k|\a_k|^2.$$
It then follows that
 $$\|x\|_\8\le\sqrt2\,\|\a\|_2.$$
On the other hand, it is clear that
 $$\|x\|_\8\ge\|x\|_2\ge \|\a\|_2.$$
We then deduce that for any $\a=(\a_k)\subset\com$ the series $\sum_k\a_kU_k$ converges in $\T^\8_\theta$  iff $\a\in\el_2$. In this case, we have
 $$\|\a\|_2\le\big\|\sum_k\a_kU_k\big\|_\8\le \sqrt2\,\|\a\|_2.$$
This clearly implies that  $\phi$ is a bounded $L_\8$ multiplier on $\mathbb{T}_{\theta}^\8$. However, $\phi$ is not a bounded $L_\8$ multiplier on $\mathbb{T}^\8$. Otherwise, the closed subspace of $L_\8(\T^\8)$ generated by the generators $(z_1, z_2, \cdots)$ would be complemented in $L_\8(\T^\8)$. But this subspace is isometric to $\el_1$. It is well known that $\el_1$ cannot be isomorphic to a complemented subspace of an $L_\8$-space. This contradiction yields that $\phi\not\in\mathrm{M}(L_\8(\mathbb{T}^\8))$. This example also shows that
 $$\mathrm{M}_{\rm cb}(L_\8(\mathbb{T}_{\theta}^\8))\subsetneqq\mathrm{M}(L_\8(\mathbb{T}_{\theta}^\8)),$$
in contrast with equality \eqref{cb=b} in the commutative case.

\smallskip

We end this section by showing the equality $\mathrm{M}_{\mathrm{cb}} (L_p (\mathbb{T}^d_{\theta}))
 =\mathrm{M}_{\mathrm{cb}} (L_{p} (\mathbb{T}^d))$ in Theorem~\ref{th:cbLpMultiplier} holds completely isometrically. To this end we first need to equip these spaces with an operator space structure. Recall that for two operator spaces $E$ and $F$ the space $\mathrm{CB} (E,
F)$ has a natural operator space structure by setting $\mathbb M_n(\mathrm{CB} (E,
F))=\mathrm{CB} (E,\mathbb M_n(F))$. Then $\mathrm{M}_{\mathrm{cb}} (L_p (\mathbb{T}^d_{\theta}))$ inherits the operator space structure of $\mathrm{CB} (L_p (\mathbb{T}^d_{\theta}),\, L_p (\mathbb{T}^d_{\theta}))$. Let $\mathrm{TM}_{\mathrm{cb}}(S_p(\el_2(\ent^d)))$ be the subspace of all c.b. Schur multipliers $\psi$ on $S_p(\el_2(\ent^d))$ which are of the Toeplitz form, i.e., $\psi_{mn}=\phi_{m-n}$ for some $\phi$. $\mathrm{TM}_{\mathrm{cb}}(S_p(\el_2(\ent^d)))$ is also an operator space via $\mathrm{TM}_{\mathrm{cb}}(S_p(\el_2(\ent^d)))\subset  \mathrm{CB} (S_p(\el_2(\ent^d)), S_p(\el_2(\ent^d)))$.

\begin{prop}
  Let $1\le p \le\8$. Then
  $$\mathrm{M}_{\mathrm{cb}} (L_p (\mathbb{T}^d_{\theta}))
  =\mathrm{M}_{\mathrm{cb}} (L_{p} (\mathbb{T}^d))\cong \mathrm{TM}_{\mathrm{cb}}(S_p(\el_2(\ent^d)))$$
completely isometrically, where the last identification is realized by $\phi\in\mathrm{M}_{\mathrm{cb}} (L_{p} (\mathbb{T}^d))\leftrightarrow\tilde\phi\in \mathrm{TM}_{\mathrm{cb}}(S_p(\el_2(\ent^d)))$ with $\tilde\phi_{mn}=\phi_{m-n}$.
\end{prop}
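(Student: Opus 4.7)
The plan is to upgrade each of the isometric identifications obtained in Proposition~\ref{prop:cbL8Multiplier} and Theorem~\ref{th:cbLpMultiplier} (together with the case $p=1$ obtained by duality) to completely isometric ones. By the operator space structure on $\mathrm{CB}(E,F)$ (with $\mathbb M_k(\mathrm{CB}(E,F))=\mathrm{CB}(E,\mathbb M_k(F))$), an element $\Phi=[\phi^{(ij)}]_{1\le i,j\le k}$ of $\mathbb M_k(\mathrm{M}_{\mathrm{cb}}(L_p(\T^d_\theta)))$ has norm equal to the c.b. norm of the matrix-valued Fourier multiplier
$$T_\Phi:L_p(\T^d_\theta)\to\mathbb M_k(L_p(\T^d_\theta)),\quad T_\Phi(x)=\sum_{i,j}T_{\phi^{(ij)}}(x)\ot e_{ij},$$
and analogous descriptions hold for $\mathbb M_k(\mathrm{M}_{\mathrm{cb}}(L_p(\T^d)))$ and $\mathbb M_k(\mathrm{TM}_{\mathrm{cb}}(S_p(\el_2(\ent^d))))$. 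It therefore suffices to prove that for every $k\ge 1$ the three c.b. norms associated with $T_\Phi$ and $M_{\tilde\Phi}$ (with $\tilde\Phi=[\tilde\phi^{(ij)}]$) coincide.

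This will be done by rereading each transference step in the earlier proofs with the matrix symbol $\Phi$ in place of the scalar $\phi$. First, the identity $T_\Phi(\tilde x)=\widetilde{T_\Phi(x)}$ from the proof of Theorem~\ref{th:cbLpMultiplier} holds entrywise, and since $x\mapsto\tilde x$ is a complete isometry $L_p(\T^d_\theta)\hookrightarrow L_p(\T^d;L_p(\T^d_\theta))$ by Corollary~\ref{prop:TransLp}, this yields
$$\|T_\Phi\|_{\mathrm{cb},\,L_p(\T^d_\theta)\to\mathbb M_k(L_p(\T^d_\theta))}\le \|T_\Phi\|_{\mathrm{cb},\,L_p(\T^d)\to \mathbb M_k(L_p(\T^d))}.$$
Second, the conjugation identity $(\mathrm{id}\ot T_\Phi)(V(a\ot 1)V^*)=V(M_{\tilde\Phi}(a)\ot 1)V^*$ of Proposition~\ref{prop:cbL8Multiplier} remains valid entrywise; since conjugation by the unitary $V$ is a complete isometry of $L_p$-spaces, this gives
$$\|M_{\tilde\Phi}\|_{\mathrm{cb},\,S_p(\el_2(\ent^d))\to \mathbb M_k(S_p(\el_2(\ent^d)))}\le \|T_\Phi\|_{\mathrm{cb},\,L_p(\T^d_\theta)\to\mathbb M_k(L_p(\T^d_\theta))}.$$
Third, the F{\o}lner maps $A_N$ and $B_N$ are unital, completely positive and trace preserving, hence complete contractions on $L_p$; because the intertwining $A_N\circ T_\Phi=M_{\tilde\Phi}\circ A_N$ is preserved for matrix symbols and $B_N\circ A_N\to\mathrm{id}$ pointwise in $L_p$, the telescoping argument of Theorem~\ref{th:cbLpMultiplier} yields
$$\|T_\Phi\|_{\mathrm{cb},\,L_p(\T^d_\theta)\to\mathbb M_k(L_p(\T^d_\theta))}\le \|M_{\tilde\Phi}\|_{\mathrm{cb},\,S_p(\el_2(\ent^d))\to \mathbb M_k(S_p(\el_2(\ent^d)))}.$$
Specializing the first and third inequalities to $\theta=0$ and then chaining all three, one obtains equality of the three c.b. norms at every matrix level, which is exactly the asserted complete isometries.

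The main obstacle is not conceptual but purely notational: one must verify that each of the intertwining identities used before (transference by $\tilde{(\cdot)}$, unitary conjugation by $V$, and the F{\o}lner intertwining) continues to hold componentwise when $\phi$ is upgraded to a matrix $\Phi=[\phi^{(ij)}]$, and that the relevant maps remain complete contractions when viewed as acting between noncommutative $L_p$-spaces valued in $\mathbb M_k$. Once this bookkeeping is in place, the chain of complete contractions produced by transference, unitary conjugation and F{\o}lner approximation, together with the specialization $\theta=0$, gives the three completely isometric identifications in a single stroke, covering also the endpoint cases $p\in\{1,\8\}$ via Proposition~\ref{prop:cbL8Multiplier} and duality.
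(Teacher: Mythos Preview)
Your approach is essentially the same as the paper's. The paper's proof consists of a single sentence saying ``modify the proof of Theorem~\ref{th:cbLpMultiplier}'' together with one highlighted fact: for a unitary $u\in\mathbb M_n(\M)$ and $x\in\mathbb M_n(L_p(\M))$ one has $\|uxu^*\|_{\mathbb M_n(L_p(\M))}=\|x\|_{\mathbb M_n(L_p(\M))}$, proved by interpolation from $p\in\{1,\infty\}$. You invoke precisely this fact when you assert that conjugation by $V$ is a complete isometry of $L_p$-spaces, and your three-step chain (transference, unitary conjugation, F{\o}lner approximation, then specialization to $\theta=0$) is exactly the modification the paper has in mind; you have simply written out the bookkeeping that the paper leaves to the reader.
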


\begin{proof}
 We require the following elementary fact: Let $\M$ be a von Neumann algebra and $u$ a unitary operator in $\mathbb M_n(\M)$. Then for any $x\in \mathbb M_n(L_p(\M))$
  $$\|uxu^*\|_{\mathbb M_n(L_p(\M))}=\|x\|_{\mathbb M_n(L_p(\M))}.$$
 Indeed, this is obvious for $p=\8$. Then by duality, it is also true for $p=1$. Finally, by interpolation, we deduce this equality for any $1<p<\8$. Armed with this fact, we can modify the proof of Theorem~\ref{th:cbLpMultiplier} to get the announced assertion. The details are left to the reader.
 \end{proof}


\section{Hardy spaces}\label{H1BMOLPT}


There exist several ways to define Hardy spaces on quantum tori. The resulting spaces may be different. The approach that we adopt in this section is based on the Littlewood-Paley theory and real variable method in Fourier analysis. Our Hardy spaces are defined by square functions in terms of the circular Poisson semigroup $\mathbb P_r$. This allows us to use the recent developments of operator-valued harmonic analysis and noncommutative Littlewood-Paley-Stein theory.

For any  $x\in\T^d_{\theta}$  define
 $$
 G_c(x) =  \Big ( \int_0^1 \Big | \frac{d}{dr}\mathbb{P}_r[x] \Big |^2(1-r)dr \Big )^{1/2}.
 $$
For $1\leq p <\infty$ let
 $$\|x\|_{\mathrm{H}^c_p}=|\hat x(\mathbf{0})| +\|G_c(x)\|_{L_p(\mathbb{T}^d_{\theta})}.$$
This is a norm on $\T^d_{\theta}$ (cf. e.g. \cite{JLX2006}). We define the column Hardy space
$\mathrm{H}^c_p(\mathbb{T}^d_{\theta})$ as the completion of
$\T^d_{\theta}$ with respect to this norm.  The row Hardy space  $\mathrm{H}^r_p(\mathbb{T}^d_{\theta})$ is defined to be the space of all $x$ such that $x^*\in \mathrm{H}^c_p(\mathbb{T}^d_{\theta})$ equipped with the natural norm. The mixture Hardy spaces are defined as follows: If $1\leq p<2$,
 $$\mathrm{H}_p(\mathbb{T}^d_{\theta}) = \mathrm{H}_p^c(\mathbb{T}^d_{\theta})+ \mathrm{H}_p^r(\mathbb{T}^d_{\theta})$$
equipped with the sum norm
 $$\|x\|_{\mathrm{H}_p}=\inf\{\|a\|_{\mathrm{H}^c_p} + \|b\|_{\mathrm{H}^r_p}\;:\;
  x=a+b, a \in \mathrm{H}_p^c(\mathbb{T}^d_{\theta}), b \in \mathrm{H}_p^r(\mathbb{T}^d_{\theta})\},$$
and if $2\leq p<\infty,$
 $$\mathrm{H}_p(\mathbb{T}^d_{\theta}) = \mathrm{H}_p^c(\mathbb{T}^d_{\theta}) \cap\mathrm{H}_p^r(\mathbb{T}^d_{\theta})$$
equipped with the intersection norm
 $$\|x\|_{\mathrm{H}_p} =\max\big\{\|x\|_{\mathrm{H}^c_p},\; \|x\|_{\mathrm{H}^r_p}\big\}.$$
We will also study the BMO spaces over $\mathbb{T}_{\theta}^d$. Set
 $$
 \mathrm{BMO}^c(\mathbb{T}_{\theta}^d) =  \big\{x\in L_2(\mathbb{T}_{\theta}^d)\;:\; \sup_r
  \big\|\mathbb{P}_r \big[ |x-\mathbb{P}_r[x]|^2\big] \big\|_\8<\infty \big\}
 $$
equipped with the norm
 $$\|x\|_{\mathrm{BMO}^c}=\max\big\{|\hat x(\mathbf{0})|,\;\sup_r  \big\|\mathbb{P}_r \big[ |x-\mathbb{P}_r[x]|^2 \big] \big\|_\8^{1/2}\big\}.$$
 $\mathrm{BMO}^r(\mathbb{T}_{\theta}^d)$ is defined as the space of all $x$
such that $x^*\in \mathrm{BMO}^c(\mathbb{T}_{\theta}^d)$ with the norm
$\|x\|_{\mathrm{BMO}^r}=\|x^*\|_{\mathrm{BMO}^c}.$ The mixture
$\mathrm{BMO} (\mathbb{T}_{\theta}^d)$ is  the intersection of these two
spaces:
 $$
 \mathrm{BMO} (\mathbb{T}_{\theta}^d) = \mathrm{BMO}^c(\mathbb{T}_{\theta}^d) \cap
 \mathrm{BMO}^r(\mathbb{T}_{\theta}^d)$$
with the intersection norm.

The above definitions are motivated by Hardy spaces of noncommutative martingales (\cite{JX2003, PX1997}) and  of quantum Markov semigroups (\cite{JLX2006, JM2011, Mei2007}). The main results of this section are summarized in the following statement which shows that the Hardy spaces on $\T^d_\theta$ possess  the properties of the usual Hardy spaces, as  expected. 

\begin{thm}\label{H1-BMO}
  \begin{enumerate}[\rm i)]

\item Let $1<p<\8$. Then $\mathrm{H}_p(\T^d_{\theta})=L_p(\T^d_{\theta})$ with equivalent norms.
   
\item The dual space of $\mathrm{H}_1^c(\T^d_{\theta})$ is equal to $\mathrm{BMO}^c(\T^d_{\theta})$ with equivalent norms via the duality bracket
  $$\la x,\; y\ra=\tau(xy^*),\quad x\in L_2(\T^d_\theta), \;y\in \mathrm{BMO}^c(\T^d_{\theta}).$$
The same assertion holds for the row and mixture spaces too.

\item  Let $1<p<\8$. Then
 $$(\mathrm{BMO}^c(\mathbb{T}_{\theta}^d),\; \mathrm{H}^c_1(\mathbb{T}^d_{\theta}))_{1/p}
 =\mathrm{H}^c_p(\mathbb{T}^d_{\theta})
 =(\mathrm{BMO}^c(\mathbb{T}_{\theta}^d),\; \mathrm{H}^c_1(\mathbb{T}^d_{\theta}))_{1/p, p}$$
with equivalent norms, where $(\,\cdot\,,\; \cdot\,)_{1/p}$ and $(\,\cdot\,,\; \cdot\,)_{1/p, p}$  denote respectively the complex and real interpolation functors.

\item  Let $1<p<\8$ and $X_0\in\{\mathrm{BMO}(\T^d_{\theta}),\; L_\8(\T^d_\theta)\}$, $X_1\in\{\mathrm{H}_1(\T^d_{\theta}),\; L_1(\T^d_\theta)\}$. Then
  $$(X_0,\; X_1)_{1/p}=L_p(\T^d_\theta)=(X_0,\; X_1)_{1/p, p}$$
 with equivalent norms. 
 \end{enumerate}
  \end{thm}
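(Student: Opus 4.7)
The plan is to prove the four parts of Theorem~\ref{H1-BMO} by transferring everything, via Corollary~\ref{prop:TransLp}, to the operator-valued setting on $\T^d$ (viewing $\T^d$-valued objects as periodic objects on $\real^d$), and then adapting Mei's arguments from \cite{Mei2007} developed in the $\real^d$ setting for an operator-valued Hardy-space/BMO theory based on the classical Poisson semigroup. Note first that since $(\mathbb{P}_r)_{0\le r<1}$ is a symmetric diffusion semigroup on $\T^d_\theta$, the general noncommutative Littlewood-Paley-Stein theory of \cite{JLX2006, JM2011} applies at once to yield part (i): both inequalities $\|G_c(x)\|_p\lesssim \|x\|_p$ and $\|x\|_p\lesssim \|x\|_{\mathrm{H}^c_p}+|\hat x(\mathbf 0)|$ are instances of the corresponding square-function inequalities in that theory, with dual formulations giving the row version and hence the mixture version.

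For part (ii), the $\mathrm{H}_1$-BMO duality, the natural approach is to first establish it at the level of $L_p(\T^d; L_p(\T^d_\theta))$-valued objects on $\T^d$ for the circular Poisson semigroup. One inclusion, namely $\mathrm{BMO}^c(\T^d_\theta)\subset \mathrm{H}_1^c(\T^d_\theta)^*$, follows from a Carleson-measure argument: given $y\in\mathrm{BMO}^c$ and $x\in \mathrm{H}_1^c$, one writes $\tau(xy^*)$ as an integral against $(\partial_r\mathbb{P}_r[x])(\partial_r\mathbb{P}_r[y])^*$ and applies the noncommutative Carleson embedding combined with the Cauchy-Schwarz inequality in the form $|\tau(ab^*)|\le \|G_c(x)\|_1\,\|y\|_{\mathrm{BMO}^c}$. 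The converse, that every bounded functional on $\mathrm{H}_1^c$ arises this way, is obtained by transferring via $\tilde{\cdot}$ to $\N_\theta$, representing the functional on the periodic $\real^d$ side, and invoking Mei's $\mathrm{H}_1$-BMO duality for the Poisson semigroup of $\real^d$ with values in $L_1(\T^d_\theta)$; the projection $\mathbb E$ from Corollary~\ref{prop:TransLp} allows one to push the resulting BMO representative back into $\T^d_\theta$. The row and mixture versions follow by passing to adjoints and by the usual sum-intersection duality.

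Parts (iii) and (iv) should now be harvested from (i), (ii) and general interpolation. For the complex interpolation identity $(\mathrm{BMO}^c,\mathrm{H}_1^c)_{1/p}=\mathrm{H}_p^c$, one combines the duality of (ii) with the reiteration theorem; the key technical input is an atomic-type decomposition (or its Poisson-semigroup substitute) which, together with (ii), interpolates to give both inclusions. The real interpolation identity $(\mathrm{BMO}^c,\mathrm{H}_1^c)_{1/p,p}=\mathrm{H}_p^c$ follows by Wolff-style interpolation once the complex case is established, or directly by the Calder\'on-Zygmund-type decomposition adapted to the noncommutative setting. Part (iv) is then immediate: the intersection $\mathrm{BMO}\cap L_\infty$-type endpoints together with the mixture definitions reduce via (i), (iii) and the standard interpolation of intersections and sums to $L_p(\T^d_\theta)$.

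The main obstacle will be step (ii), and more precisely the periodization procedure that lets one transfer Mei's $\real^d$-results to $\T^d$: the circular Poisson kernel on $\T^d$ for $d\ge2$ lacks a clean closed form, so one must carefully compare $\mathbb P_r$ on $\T^d_\theta$ (via $\tilde{\cdot}$) with the restriction to $[0,1]^d$ of the Poisson semigroup on $\real^d$ acting on periodic $L_1(\T^d_\theta)$-valued functions. This requires a delicate analysis of several equivalent BMO-norms and square functions—exactly the point the introduction warns about—where one checks that the Poisson BMO-norm on $\T^d_\theta$ is equivalent to the periodic version of Mei's $\real^d$ BMO-norm, uniformly in the operator-valued data. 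Once this equivalence is in hand, the remainder of the argument is a routine transcription of Mei's proof, together with the transference dictionary from Section~\ref{Transference}.
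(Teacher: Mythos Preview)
Your overall strategy---transfer to $\N_\theta=L_\infty(\T^d)\overline\otimes\T^d_\theta$, periodize to view $\T^d$-data as periodic functions on $\real^d$, then invoke Mei's operator-valued theory---is exactly the paper's strategy, and you correctly identify the real work as the comparison of the circular Poisson BMO/square-function norms on $\T^d$ with their Euclidean counterparts on periodic functions. That much is right.

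Where your plan diverges from the paper is in organization and, for part (iii), in substance. The paper does \emph{not} try to deduce (iii) and (iv) from (i) and (ii) by abstract interpolation (reiteration, Wolff, atomic decomposition). Instead it first proves a full operator-valued analogue of the theorem on $\T^d$ (their Theorem~\ref{H1-BMO bis}), including the interpolation identities, by reducing to the dyadic-martingale setting exactly as in \cite{Mei2007}; this reduction is what actually furnishes the interpolation, not duality plus reiteration. Then a short complementation lemma (the map $x\mapsto\tilde x$ is an isometric embedding with $1$-complemented image in both $\mathrm{H}_p^c$ and $\mathrm{BMO}^c$, via the conditional expectation $\mathbb E$) transfers \emph{all} parts (i)--(iv) at once, since interpolation functors respect complemented subspaces.

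Your sketch for (iii) has a genuine gap: the identity $(\mathrm{BMO}^c,\mathrm{H}_1^c)_{1/p}=\mathrm{H}_p^c$ does not follow from duality and reiteration alone---you need an independent identification of some intermediate space, which is precisely what the martingale reduction supplies. The phrases ``atomic-type decomposition (or its Poisson-semigroup substitute)'' and ``Calder\'on--Zygmund-type decomposition adapted to the noncommutative setting'' are placeholders, not arguments; making them precise would essentially reproduce the martingale route anyway. The cleaner path is the paper's: establish Theorem~\ref{H1-BMO bis} on $\T^d$ (this is where the periodization lemmas you anticipate are actually used), prove the complementation lemma, and conclude.
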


Some parts of this theorem can be deduced from existing results in literature. This is the case of i) and the complex interpolation equality $(\mathrm{BMO}(\mathbb{T}_{\theta}^d),\; L_1(\mathbb{T}^d_{\theta}))_{1/p}
 =L_p(\mathbb{T}^d_{\theta})$ in  iv). Let us explain these two points. 

According to the discussion following Theorem \ref{th:FejerPoissonPhiMaxIneq}, the circular Poisson semigroup $(\mathbb P_r)_{0\le r<1}$ on $\T^d_\theta$  is a noncommutative symmetric diffusion semigroup in the sense of \cite{JX2007}. We claim that  $(\mathbb P_r)_{0\le r<1}$ admits a Markov  dilation  (as well as a Rota dilation) in the sense of \cite{JLX2006}. Indeed, considering  the von Neumann subalgebra $\widetilde{\mathbb{T}_{\theta}^d}$ of $\N_{\theta}=L_\8(\T^d)\overline{\ot}\T^d_\theta$, which is the image of $\mathbb{T}_{\theta}^d$ under the map $x\mapsto \tilde{x}$, we see that  the circular Poisson semigroup on the usual torus $\T^d$  extends  to a semigroup by tensoring with ${\rm id}_{\T^d_\theta}$. By a slight abuse of notation, we will also use $(\mathbb P_r)_{0\le r<1}$  to denote the circular Poisson semigroup on the usual torus $\T^d$. It is clear that $\mathbb{P}_r\ot{\rm id}_{\T^d_\theta}[\tilde x]=\wt{\mathbb P_r[x]}$ for any $x\in\T^d_\theta$. Since every symmetric diffusion semigroup on a commutative von Neumann algebra can be dilated to a Markov unitary group as well as an inverse martingale, $(\mathbb{P}_r\ot{\rm id}_{\T^d_\theta})_{0\le r<1}$ admits a Markov/Rota dilation, so does its restriction to $\widetilde{\mathbb{T}_{\theta}^d}$. Our claim then follows.  Therefore, the semigroup $(\mathbb P_r)_{0\le r<1}$ on $\T^d_\theta$  satisfies the assumption of \cite{JLX2006} which insures the existence of an associated $H_\8$-functional calculus. Thus by \cite[Theorem~7.6]{JLX2006}, we get i).  On the other hand, the interpolation theorem of \cite{JM2011} yields $(\mathrm{BMO}(\mathbb{T}_{\theta}^d),\; L_1(\mathbb{T}^d_{\theta}))_{1/p}
 =L_p(\mathbb{T}^d_{\theta})$. We also point out that the duality result in part ii) could be deduced from a work in progress of Avsec and Mei \cite {AM2011}.

\medskip

To  prove the remaining parts  of Theorem~\ref{H1-BMO}  we will use transference to reduce the  problem to the corresponding one on $\N_{\theta}$ and then use Mei's results \cite{Mei2007}. An advantage of this proof  is that it also provides an alternative (more elementary) approach to the two parts already considered in the previous paragraph. Recall that the framework of \cite{Mei2007} is the Euclidean space $\real^d$, and the Hardy spaces there are defined by using the Poisson semigroup on $\real^d$. The geometry of $\real^d$ is simpler than $\T^d$. But what really renders matters more handy in $\real^d$ is the explicit compact formula of the Poisson kernel (or its growth estimates). The situation for $\T^d$ is harder. Although it is claimed in \cite{Mei2007} as remarks that all results there hold equally with essentially the same proofs in the $d$-torus setting, this claim is clearly true for $\T$ thanks to the explicit simple formula of the Poisson kernel of $\T$. However, it would not be so transparent whenever $d\ge2$. As a byproduct of our proof below of Theorem~\ref{H1-BMO}, we remedy this situation, which constitutes another advantage of our approach via transference. Finally,  it seems that even in the scalar case there does not exist published references  on Hardy space theory on $\T^d$ for $d\ge2$ via the Littlewood-Paley theory, although this theory is certainly known as folklore to many specialists. Our approach provides, in particular,  a complete  picture of the scalar-valued Hardy space theory on $\T^d$, exactly parallel to that on $\real^d$.

\medskip\n{\bf Convention.} For notational simplicity we will denote all circular Poisson semigroups considered in the sequel by $(\mathbb P_r)_{0\le r<1}$. Thus  $\mathbb{P}_r\ot{\rm id}_{\T^d_\theta}$ will be simply denoted by $\mathbb{P}_r$. This slight abuse of notation should not cause any confusion in concrete contexts. For instance, for $x\in\T^d_\theta$,  $\mathbb{P}_r[x]\in\T^d_\theta$ while for $f\in\N_\theta$, $\mathbb{P}_r[f]=\mathbb{P}_r\ot{\rm id}_{\T^d_\theta}[f]\in\N_\theta$.  On the other hand, $\mathbb{P}_r$ will also stands for the circular Poisson kernel on $\T^d$ given by \eqref{PoissonKer}. Thus for $f\in L_1(\N_\theta)$ we have (recalling that $dm$ denotes Haar measure on $\T^d$)
 $$\mathbb P_r[f](z)=\mathbb P_r*f(z)=\int_{\T^d}\mathbb P_r(z\cdot\overline w)f(w)dm(w),\quad z\in\T^d.$$

We will study several BMO norms as well as $\mathrm{H}_p^c$ norms. The notational system for these norms (or spaces) might look heavy; but everything should be clear in concrete contexts.   We start our analysis  with BMO spaces on $\T^d$ with values in a von Neumann algebra $\M$. For simplicity we will assume that $\M$ is equipped  with a normal faithful tracial state $\tau$ ($\M$ will be $\T^d_\theta$ in the proof of Theorem~\ref{H1-BMO}) .  We start with the BMO space. Let
 $$\mathrm{BMO}^c(\T^d;\M)=\{f\in L_2(\T^d;L_2(\M))\;: \;
 \sup_r\big\|\mathbb{P}_r\big[ |f-\mathbb{P}_r[f]|^2 \big]\big\|_{\infty}<\infty\},$$
equipped with the norm
 $$\|f\|_{\mathrm{BMO}^c}=\max\big\{\|\hat f(\mathbf 0)\|_\8,\;
 \sup_r\big\|\mathbb{P}_r\big[ |f-\mathbb{P}_r[f]|^2 \big]\big\|_{\infty}^{1/2}\big\}.$$
Here the first $L_\8$-norm is the one of $\M$ and the second that of $L_\8(\T^d)\overline\ot\M$.

We require the following lemma  which characterizes $\mathrm{BMO}^c(\T^d;\M)$ by the noncommutative analogue of the usual Garsia norm. This lemma is a special case of \cite[Theorem 2.9]{JM2011}. But we prefer to present the following elementary proof which was communicated to us by Tao Mei.

\begin{lem}\label{garsia lem}
 For any $f\in L_2(\T^d;L_2(\M))$ we have
 \beq\label{2bmo}
  \sup_r\big\|\mathbb{P}_r\big[ |f-\mathbb{P}_r[f]|^2 \big]\big\|_{\infty}\approx
  \sup_r\big\|\mathbb{P}_r[|f|^2]-|\mathbb{P}_r[f]|^2\big\|_{\infty}
  \eeq
 with universal equivalence constants.
  \end{lem}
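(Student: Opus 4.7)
My plan rests on the algebraic identity, valid for any $h \in \M$ independent of the integration variable $w$:
$$
\int_{\T^d} \mathbb{P}_r(z\bar w)\,|f(w) - h|^2\,dm(w) = \bigl(\mathbb{P}_r[|f|^2](z) - |g_r(z)|^2\bigr) + |g_r(z) - h|^2,
$$
where $g_r := \mathbb{P}_r[f]$, obtained by direct expansion of the square (using that $\mathbb{P}_r$ is unital, $\ast$-preserving and that $h$ commutes out of the integral). Choosing $h = g_r(z)$ (the value at $z$, which is a constant relative to $w$) shows that the Garsia side of \eqref{2bmo} equals $\int \mathbb{P}_r(z\bar w) |f(w) - g_r(z)|^2\,dm(w)$, while the BMO side $A_r(z) := \mathbb{P}_r[|f - g_r|^2](z)$ instead uses the variable centering $g_r(w)$. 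The task is thus to compare these two centerings.

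To bound $A_r$ by the Garsia sup, I would apply the operator inequality $(c+d)^*(c+d) \le (1+\epsilon)c^*c + (1+\epsilon^{-1})d^*d$ with $c = f(w) - g_r(z)$ and $d = g_r(z) - g_r(w)$, yielding
$$
A_r(z) \le (1+\epsilon)B_r(z) + (1+\epsilon^{-1})\bigl[B_r^{g_r}(z) + |g_r(z) - \mathbb{P}_r[g_r](z)|^2\bigr],
$$
where $B_r := \mathbb{P}_r[|f|^2] - |g_r|^2$ and $B_r^{g_r} := \mathbb{P}_r[|g_r|^2] - |\mathbb{P}_r[g_r]|^2$ is the analogous Garsia expression for $g_r$, obtained by applying the same algebraic identity to $g_r$ in place of $f$. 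By Kadison--Schwarz applied to the unital c.p.~map $\mathbb{P}_r$ together with the semigroup identity $\mathbb{P}_r\circ\mathbb{P}_r = \mathbb{P}_{r^2}$, I have $\mathbb{P}_r[|g_r|^2] \le \mathbb{P}_{r^2}[|f|^2]$, hence $B_r^{g_r} \le B_{r^2}$; and by Kadison--Schwarz again, $|g_r - \mathbb{P}_r[g_r]|^2 = |\mathbb{P}_r[f-g_r]|^2 \le A_r$. The reverse direction $B_r \lesssim \sup_s \|A_s\|_\infty$ is obtained symmetrically by rearranging the identity
$$
B_r - A_r = \mathbb{P}_r[|g_r|^2] - |g_r|^2 + \mathbb{P}_r\bigl[(f-g_r)^* g_r + g_r^*(f-g_r)\bigr]
$$
and estimating each term by the same AM--GM plus Kadison--Schwarz toolkit.

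The main obstacle is that the Kadison--Schwarz bound $|g_r - \mathbb{P}_r[g_r]|^2 \le A_r$ reintroduces $A_r$ on the right-hand side of the first chain with coefficient $1 + \epsilon^{-1} > 1$, so direct absorption fails. To close the estimate I expect one must exploit the semigroup structure at multiple scales: both sides of \eqref{2bmo} are invariant under translation $f \mapsto f - c$ by a constant, so one may assume $\hat f(\mathbf 0) = 0$; the bound may then be iterated with $r$ replaced by $r^{1/2}, r^{1/4}, \dots$, and the resulting tail terms $A_{r^{1/2^n}}$ controlled telescopically via $\mathbb P_{r^{1/2^n}}[f] \to 0$ as $n \to \infty$, leaving only Garsia contributions on the right-hand side. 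Alternatively, one can proceed by passing to the integral representation via $\partial_r\mathbb{P}_r[f]$, which links both sides to a common square-function quantity and yields the desired equivalence directly.
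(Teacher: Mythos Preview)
Your proposal has genuine gaps in both directions.

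For the direction $A_r \lesssim \sup_s B_s$ (your ``bound $A_r$ by the Garsia sup''), the absorption obstacle you identify is real, but the fix is not iteration. The paper's remedy is a Poisson-specific kernel inequality you are missing: $\mathbb{P}_{r^2}[g] \le 2\,\mathbb{P}_r[g]$ for positive $g$ (from subordination, or directly from the Poisson summation formula). Using the Hilbert $C^*$-module triangle inequality at the square-root level rather than your AM--GM splitting, and writing $d = -\mathbb{P}_r[h]$ with $h = f - g_r(z)$, one gets $\mathbb{P}_r[|d|^2](z) \le \mathbb{P}_{r^2}[|h|^2](z) \le 2\mathbb{P}_r[|h|^2](z) = 2B_r(z)$ directly, and the estimate closes as $\|A_r(z)\|^{1/2}\le(1+\sqrt2)\|B_r(z)\|^{1/2}$ with no $A_r$ reappearing. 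Your iteration suggestion does not work as stated: as $n\to\infty$ one has $r^{1/2^n}\to 1$, so $\mathbb{P}_{r^{1/2^n}}[f]\to f$, not $0$.

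For the reverse direction $B_r \lesssim \sup_s A_s$, your claim that it follows ``symmetrically'' by the same AM--GM/Kadison--Schwarz toolkit is where the main gap lies. The paper shows this is the \emph{harder} inequality; it requires the convexity-type estimate
\[
2\,\mathbb{P}_r\bigl[|\mathbb{P}_r[f]|^2\bigr] \le \mathbb{P}_{r^2}[|f|^2] + |\mathbb{P}_{r^2}[f]|^2,
\]
which gives $\|\mathbb{P}_r[|g_r|^2]-|\mathbb{P}_{r^2}[f]|^2\|_\infty \le \tfrac12\|B_{r^2}\|_\infty$ and hence allows absorption after taking the sup in $r$. The proof of this inequality is not an AM--GM/Kadison--Schwarz manipulation: it is obtained by writing both sides as integrals of $t$-derivatives of $\mathbb{Q}_{\varepsilon\pm t}$-expressions and using that the square of the Poisson generator is (a multiple of) the Laplacian on $\mathbb{T}^d$, so that the second-order Leibniz rule produces positive gradient terms. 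Your vague reference to ``passing to the integral representation via $\partial_r\mathbb{P}_r[f]$'' points in the right direction, but the actual computation depends essentially on $A^2 = -4\pi^2\Delta$ and is the heart of the lemma; it cannot be replaced by the elementary operator inequalities you list.
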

  
\begin{proof}
 First note that 
 $$\mathbb{P}_r[|f|^2](z)-|\mathbb{P}_r[f]|^2 (z)
 =\mathbb{P}_r\big[ |f-\mathbb{P}_r[f](z)|^2 \big](z),\quad\forall\; z\in\T^d.$$
Thus
 \beq\label{garsia}
 \sup_{0\le r<1}\big\|\mathbb{P}_r[|f|^2]-|\mathbb{P}_r[f]|^2 \big\|_{\infty}
 =\sup_{0\le r<1}\sup_{z\in\T^d}\big\|\mathbb{P}_r\big[ |f-\mathbb{P}_r[f](z)|^2 \big](z)\big\|_{\M}.
 \eeq
The right hand side is exactly the analogue of the usual Garsia norm (cf. \cite[Corollary~VI.2.4]{Gar2007}). For any fixed $r$ and $z$ we have
 $$
 \big\|\mathbb{P}_r\big[ |f-\mathbb{P}_r[f]|^2 \big](z)\big\|_{\M}^{1/2}
 \le \big\|\mathbb{P}_r\big[ |f-\mathbb{P}_r[f](z)|^2 \big](z)\big\|_{\M}^{1/2}
 + \big\|\mathbb{P}_r\big[\big|\mathbb{P}_r[f-\mathbb{P}_r[f](z)]\big|^2 \big](z)\big\|_{\M}^{1/2}.$$
 By Kadison's Cauchy-Schwarz inequality, 
  $$\mathbb{P}_r\big[\big|\mathbb{P}_r[f-\mathbb{P}_r[f](z)]\big|^2 \big]
  \le \mathbb{P}_{r^2}\big[\big|f-\mathbb{P}_r[f](z)\big|^2\big].$$
On the other hand, since $\mathbb{P}_r$ is subordinated to the heat semigroup on $\T^d$, by the subordination formula, one has
 $\mathbb{P}_{r^2}[g]\le 2\mathbb{P}_{r}[g]$ for positive $g\in L_1(\T^d;L_1(\M))$. Alternatively, this inequality can be easily checked by \eqref{poisson} below. 
Then we deduce that
 $$\sup_{r}\sup_{z}\big\|\mathbb{P}_r\big[ |f-\mathbb{P}_r[f]|^2 \big](z)\big\|_{\M}^{1/2}
 \le(1+\sqrt2\,)\sup_{r}\sup_{z}\big\|\mathbb{P}_r\big[ |f-\mathbb{P}_r[f](z)|^2 \big](z)\big\|_{\M}^{1/2}.$$
This is the upper estimate of \eqref{2bmo}.

The converse inequality is harder. Fix $f\in L_2(\T^d;L_2(\M))$. By triangle inequality, we have
 \be\begin{split}
 \big\|\mathbb{P}_r[|f|^2]-|\mathbb{P}_r[f]|^2\big\|_{\infty}^{1/2}
 \le &\big\|\mathbb{P}_r[|f-\mathbb{P}_r[f]|^2]-
 |\mathbb{P}_r[f-\mathbb{P}_r[f]]|^2\big\|_{\infty}^{1/2}\\
 &+\big\|\mathbb{P}_r[|\mathbb{P}_r[f]|^2]-
 |\mathbb{P}_r[\mathbb{P}_r[f]]|^2\big\|_{\infty}^{1/2}\\
 \le &\big\|\mathbb{P}_r[|f-\mathbb{P}_r[f]|^2]\big\|_{\infty}^{1/2}\\
 &+\big\|\mathbb{P}_r[|\mathbb{P}_r[f]|^2]-
 |\mathbb{P}_r[\mathbb{P}_r[f]]|^2\big\|_{\infty}^{1/2}.
 \end{split}\ee
Assuming for the moment the following inequality
 \beq\label{conv}
 2\mathbb{P}_r[|\mathbb{P}_r[f]|^2]
 \le \mathbb{P}_{r^2}[|f|^2]+|\mathbb{P}_{r^2}[f]|^2,
 \eeq
we get
 $$2\big(\mathbb{P}_r[|\mathbb{P}_r[f]|^2]-
 |\mathbb{P}_{r^2}[f]|^2\big)
 \le \mathbb{P}_{r^2}[|f|^2]-|\mathbb{P}_{r^2}[f]|^2.$$
Combining the preceding inequalities, we then deduce that
 $$\sup_r\big\|\mathbb{P}_r[|f|^2]-|\mathbb{P}_r[f]|^2\big\|_{\infty}^{1/2}
 \le \sup_r\big\|\mathbb{P}_r\big[ |f-\mathbb{P}_r[f]|^2 \big]\big\|_{\infty}^{1/2} +
 \frac1{\sqrt2}\sup_r\big\|\mathbb{P}_r[|f|^2]-|\mathbb{P}_r[f]|^2\big\|_{\infty}^{1/2}.$$
Whence the lower  estimate of \eqref{2bmo} with $2+\sqrt2$ as constant.

It remains to prove \eqref{conv}. To this end, it is more convenient to work with $\mathbb Q_\e=\mathbb{P}_r$ for $r=e^{-2\pi\e}$.  Then we must show
 \beq\label{convbis}
 \mathbb Q_\e[|\mathbb Q_\e[f]|^2]-|\mathbb Q_{2\e}[f]|^2\le 
 \mathbb Q_{2\e}[|f|^2] -\mathbb Q_\e[|\mathbb Q_\e[f]|^2],\quad\forall\;\e>0.
 \eeq
Let us write
 $$\mathbb Q_\e[|\mathbb Q_\e[f]|^2]-|\mathbb Q_{2\e}[f]|^2
 =-\int_0^\e\frac{d}{dt}\mathbb Q_{\e-t}[|\mathbb Q_{\e+t}[f]|^2]dt.$$
Let $A$ be the negative generator of   $\mathbb Q_\e$: $\mathbb Q_\e=e^{-\e A}$. Then 
 \be\begin{split}
 \frac{d}{dt}\mathbb Q_{\e-t}[|\mathbb Q_{\e+t}[f]|^2]
 =&A\mathbb Q_{\e-t}[|\mathbb Q_{\e+t}[f]|^2] \\
 &-\mathbb Q_{\e-t}\big[(A\mathbb Q_{\e+t}[f]^*)(\mathbb Q_{\e+t}[f])
 +(\mathbb Q_{\e+t}[f]^*)(A\mathbb Q_{\e+t}[f])\big].
 \end{split}\ee
For $s>0$ let
 $$F_s(g)=-A\mathbb Q_{s}[|\mathbb Q_{s}[g]|^2] \\
 +\mathbb Q_{s}\big[(A\mathbb Q_{s}[g]^*)(\mathbb Q_{s}[g])
 +(\mathbb Q_{s}[g]^*)(A\mathbb Q_{s}[g])\big].$$
Then for $g=\mathbb Q_{\e+t}[f]$ we have
 \beq\label{int}
 \mathbb Q_\e[|\mathbb Q_\e[f]|^2]-|\mathbb Q_{2\e}[f]|^2
 =\lim_{s\to0}\int_0^\e\mathbb Q_{\e-t}[F_s(g)]dt.
 \eeq
It is easy to check that $\lim_{s\to\8}F_s(g)=0$ (one can use, for instance, \eqref{poisson} below). Then
 \beq\label{Fg-h}
 F_s(g)=-\int_s^\8\frac{d}{du}F_u(g)du.
 \eeq
Elementary calculations lead to
 \be\begin{split}
  \frac{d}{du}F_u(g)
  &=A^2\mathbb Q_{u}[|\mathbb Q_{u}[g]|^2] 
 -\mathbb Q_{u}\big[(A^2\mathbb Q_{u}[g]^*)(\mathbb Q_{u}[g])
 +(\mathbb Q_{u}[g]^*)(A^2\mathbb Q_{u}[g])\big]-2\mathbb Q_{u}[|A\mathbb Q_{u}[g]|^2]\\
 &=\mathbb Q_{u}\big[A^2|\mathbb Q_{u}[g]|^2 
 -(A^2\mathbb Q_{u}[g]^*)(\mathbb Q_{u}[g])
 -(\mathbb Q_{u}[g]^*)(A^2\mathbb Q_{u}[g])-2|A\mathbb Q_{u}[g]|^2\big].
 \end{split}\ee
Note that
 $$A=2\pi\sqrt{-\D}\,,$$
where $\D$ is the Laplacian of $\T^d$:
 $$\D=\sum_{k=1}^d\frac{\partial^2}{\partial z_k^2}\,.$$
So $A^2=-4\pi^2\D$ and
 \be
 A^2|\mathbb Q_{u}[g]|^2=(A^2\mathbb Q_{u}[g]^*)(\mathbb Q_{u}[g])
 +(\mathbb Q_{u}[g]^*)(A^2\mathbb Q_{u}[g])-8\pi^2\sum_{k=1}^d\Big|\frac{\partial}{\partial z_k}\mathbb Q_{u}[g]\Big|^2.
 \ee
Therefore,
 \be 
 \frac{d}{du}F_u(g)=
 -8\pi^2\sum_{k=1}^d\mathbb Q_{u}\big[\big|\frac{\partial}{\partial z_k}\mathbb Q_{u}[g]\big|^2\big]
 -2\mathbb Q_{u}\big[|A\mathbb Q_{u}[g]|^2\big].
 \ee 
Recall that $g=\mathbb Q_{\e+t}[f]$. By Kadison's Cauchy-Schwarz inequality and using the above equality twice, we obtain
 $$
 -\frac{d}{d u}F_u(g)\le 
 \mathbb Q_{\e}\Big[8\pi^2\sum_{k=1}^d\mathbb Q_{u}\Big[\Big|\frac{\partial}{\partial z_k}\mathbb Q_{u}[h]\Big|^2\Big]
 +2\mathbb Q_{u}\big[|A\mathbb Q_{u}[h]|^2\big]\Big]\le -\mathbb Q_{\e}\Big[\frac{d}{d u}F_u(h)\Big],$$
where $h=\mathbb Q_{t}[f]$. Thus by \eqref{Fg-h},
 $$F_s(g)\le \mathbb Q_{\e}[F_s(h)].$$
Hence by \eqref{int} and inverting the procedure leading to \eqref{int}, we obtain
 \be\begin{split}
 \mathbb Q_\e[|\mathbb Q_\e[f]|^2]-|\mathbb Q_{2\e}[f]|^2
 &\le\lim_{s\to0}\int_0^\e\mathbb Q_{2\e-t}[F_s(h)]dt\\
 &=-\int_0^\e\frac{\partial}{\partial t}\mathbb Q_{2\e-t}[|\mathbb Q_{t}[f]|^2]dt
 =\mathbb Q_{2\e}[|f|^2]-\mathbb Q_\e[\mathbb Q_\e[f]|^2].
 \end{split}\ee
This yields \eqref{convbis}, and \eqref{conv} too. Thus the lemma is proved.
 \end{proof}

Although this is not really necessary, it is more convenient to work with the cube $\mathbb{I}^d=[0,\, 1]^d$ instead of $\T^d$. Another reason is that the case of $\I^d$ is closer to that of $\real^d$.  So we will identify $\T^d$ with $\I^d$, as in the proof of Theorem \ref{th:FejerPoissonPhiMaxIneq}. The addition in $\mathbb{I}^d$ is modulo $1$ coordinatewise, which corresponds to the multiplication in $\T^d$ under the identification $(e^{2 \pi \mathrm{i} s_1},\cdots , e^{2 \pi \mathrm{i} s_d})\;\leftrightarrow\; (s_1, \cdots, s_d)$.  Accordingly, functions on $\T^d$ and $\mathbb{I}^d$ are identified too. Thus  $L_p(\T^d;L_p(\M))= L_p(\mathbb{I}^d; L_p(\M))$. 

We will use the following Poisson summation formula (see \cite[Corollary~VII.2.6]{SW1975}):
 \beq\label{poisson}
 \mathbb P_r(z)=\sum_{m\in\ent^d}\f_\e(s+m)\quad \text{with}\quad z=(e^{2\pi{\rm i}s_1},\cdots, e^{2\pi{\rm i}s_d})\quad\text{and}\quad r=e^{-2\pi\e},
 \eeq
 where $\f_\e$ is the Poisson kernel on $\real^d$:
 $$\f_\e(s)=c_d\,\frac{\e}{(\e^2+|s|^2)^{(d+1)/2}}\,,\quad s=(s_1,\cdots, s_d)\in\real^d.$$
In the sequel, we will always assume that $z$ and $s$, $r$ and $\e$ are related as in \eqref{poisson}. Let
 \beq\label{kernel Q}
 \mathbb Q_\e(s)=\sum_{m\in\ent^d}\f_\e(s+m),\quad s\in\mathbb{I}^d.
 \eeq
This notation is consistent with that introduced during the proof of Lemma~\ref{garsia lem} since 
   \beq\label{2poissons}
   \mathbb P_r[f](z)=\mathbb Q_\e[f](s)=\mathbb Q_\e*f(s)=\int_{\mathbb{I}^d }\mathbb Q_\e(s-t)f(t)dt.
   \eeq
An interval of $\mathbb{I}$ is either a subinterval of $\mathbb{I}$ or a union  $[b,\, 1]\cup[0,\, a]$ with $0<a<b<1$. The latter union is the interval $[b-1,\, a]$ by the addition modulo $1$ of $\I$. So the intervals of $\mathbb{I}$ correspond exactly to the arcs of $\T$. A cube of $\mathbb{I}^d$ is a product of $d$ intervals.  For $f\in L_1(\mathbb{I}^d; L_1(\M))$ and a cube $Q\subset \mathbb{I}^d$ let
 $$f_Q=\frac1{|Q|}\int_Q fds,$$
 where $|Q|$ denotes the volume of $Q$. Then we define $\mathrm{BMO}^c(\mathbb{I}^d; \M)$ as the space of all $f\in L_2(\mathbb{I}^d; L_2(\M))$ such that
 $$\sup_{Q\subset \mathbb{I}^d  \text{cube}} \Big\|\frac1{|Q|}\int_Q\big|f-f_Q\big|^2ds\Big\|_\8<\8,$$
equipped with the norm
 $$\|f\|_{\mathrm{BMO}^c}=\max\big\{\big\|f_{\mathbb{I}^d}\big\|_\8,\quad
 \sup_{Q\subset \mathbb{I}^d  \text{cube}} \Big\|\frac1{|Q|}\int_Q\big|f-f_Q\big|^2ds\Big\|_\8^{1/2}\big\}.$$
Here $\|\,\|_\8$ denotes, of course, the norm of $\M$.

\begin{lem}\label{BMO cube}
 $\mathrm{BMO}^c(\T^d; \M)=\mathrm{BMO}^c(\mathbb{I}^d; \M)$ with equivalent norms.
   \end{lem}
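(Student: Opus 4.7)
My plan is to compare the two BMO definitions via the Garsia characterization from Lemma~\ref{garsia lem} together with the Poisson summation formula~\eqref{poisson}, adapting the classical Euclidean Poisson-vs-cube comparison to the torus. Since $\hat f(\mathbf 0)=f_{\mathbb I^d}$, the zero-frequency parts of the two norms coincide automatically, so I focus on the oscillation parts. By Lemma~\ref{garsia lem} and \eqref{garsia}, the Poisson BMO-norm is equivalent to the pointwise Garsia quantity $\sup_{s_0,\varepsilon}\|\mathbb Q_\varepsilon[|f-\mathbb Q_\varepsilon[f](s_0)|^2](s_0)\|_{\mathcal M}^{1/2}$, which will be compared to mean oscillations over cubes of comparable scale. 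The direct algebraic identity
\[
\mathbb Q_\varepsilon[|f-c|^2](s_0)=\mathbb Q_\varepsilon[|f-\mathbb Q_\varepsilon[f](s_0)|^2](s_0)+|\mathbb Q_\varepsilon[f](s_0)-c|^2,\qquad c\in\mathcal M,
\]
identifying $\mathbb Q_\varepsilon[f](s_0)$ as the minimizer, will be used repeatedly.

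For $\mathrm{BMO}^c(\mathbb T^d;\mathcal M)\subset \mathrm{BMO}^c(\mathbb I^d;\mathcal M)$: given a cube $Q\subset\mathbb I^d$ centered at $s_0$ with small sidelength $\ell$, I choose $\varepsilon\sim\ell$. From \eqref{poisson} and the pointwise bound $\varphi_\varepsilon(s)\gtrsim_d\varepsilon^{-d}$ for $|s|\lesssim\varepsilon$, positivity of each summand yields $\mathbb Q_\varepsilon(s_0-t)\gtrsim 1/|Q|$ uniformly for $t\in Q$. Combined with the minimization identity and the $L_2$-minimizing property of $f_Q$,
\[
\frac1{|Q|}\int_Q|f-f_Q|^2dt\le \frac1{|Q|}\int_Q|f-\mathbb Q_\varepsilon[f](s_0)|^2dt\lesssim \mathbb Q_\varepsilon\big[|f-\mathbb Q_\varepsilon[f](s_0)|^2\big](s_0),
\]
whose supremum over $(s_0,\varepsilon)$ is $\lesssim \|f\|^2_{\mathrm{BMO}^c(\mathbb T^d;\mathcal M)}$ by Lemma~\ref{garsia lem}. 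Cubes of large sidelength (above a fixed universal constant) are handled by a finite subdivision into small ones.

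For the converse $\mathrm{BMO}^c(\mathbb I^d;\mathcal M)\subset\mathrm{BMO}^c(\mathbb T^d;\mathcal M)$: fix $s_0$ and $\varepsilon\le 1$ and let $Q_k$ be the cube centered at $s_0$ with sidelength $2^k\varepsilon$ (truncated at $\mathbb I^d$ once $2^k\varepsilon>1$). The minimization identity gives $\mathbb Q_\varepsilon[|f-\mathbb Q_\varepsilon[f](s_0)|^2](s_0)\le \mathbb Q_\varepsilon[|f-f_{Q_0}|^2](s_0)$, and I split the right side into a dyadic annular sum over $Q_k\setminus Q_{k-1}$. On each annulus the decay $\mathbb Q_\varepsilon(s_0-t)\lesssim 2^{-k(d+1)}/\varepsilon^d$ holds: the $m=\mathbf 0$ term in \eqref{poisson} dominates, while the tail $\sum_{m\neq\mathbf 0}\varphi_\varepsilon(\cdot+m)$ is controlled by $\varepsilon\sum_{m\neq\mathbf 0}|m|^{-d-1}\lesssim 1$ and contributes only a lower-order error. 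Expanding $|f-f_{Q_0}|^2\le 2|f-f_{Q_k}|^2+2|f_{Q_k}-f_{Q_0}|^2$, together with the operator telescoping bound $|f_{Q_k}-f_{Q_0}|^2\lesssim k^2\|f\|^2_{\mathrm{BMO}^c(\mathbb I^d;\mathcal M)}\cdot 1_{\mathcal M}$ (obtained from Kadison's Cauchy--Schwarz applied to $g\mapsto |Q_{k+1}|^{-1}\int_{Q_{k+1}}g\,dt$, the nesting $Q_k\subset Q_{k+1}$, and the operator inequality $|\sum_{j<k} a_j|^2\le k\sum_{j<k}|a_j|^2$), the annular sum converges through $\sum_k 2^{-k}(1+k^2)<\infty$, giving $\mathbb Q_\varepsilon[|f-\mathbb Q_\varepsilon[f](s_0)|^2](s_0)\lesssim \|f\|^2_{\mathrm{BMO}^c(\mathbb I^d;\mathcal M)}$.

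The main technical obstacle will be carrying out the operator-valued telescoping estimate for $|f_{Q_k}-f_{Q_0}|^2$ rigorously, together with the bookkeeping of the dyadic annuli when cubes wrap around or are truncated near the boundary of $\mathbb I^d$; this last point is handled transparently by working modulo $\mathbb Z^d$ in \eqref{poisson}, which provides the required translation invariance on the torus.
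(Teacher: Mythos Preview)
Your proposal is correct and follows essentially the same route as the paper: reduce via Lemma~\ref{garsia lem} to the pointwise Garsia quantity, dominate the cube indicator by $\mathbb Q_\varepsilon$ for one direction, and for the converse run a dyadic annular decomposition of $\mathbb Q_\varepsilon[|f-f_{Q_0}|^2](s_0)$ together with the telescoping bound $\|f_{Q_k}-f_{Q_0}\|_\infty\lesssim k\|f\|_{\mathrm{BMO}^c(\mathbb I^d;\M)}$ (your operator-level version via $|\sum a_j|^2\le k\sum|a_j|^2$ is a harmless variant of the paper's norm-level triangle inequality). The only loose end is that you restrict to $\varepsilon\le 1$ without saying a word about $\varepsilon>1$; the paper disposes of $\varepsilon\ge 1/2$ in one line using $\mathbb Q_\varepsilon(t)\approx 1$ on $\mathbb I^d$, and your ``finite subdivision'' remark for large cubes in the first direction is in fact unnecessary since the Poisson lower bound $\mathbb Q_\varepsilon(s_0-t)\gtrsim|Q|^{-1}$ already holds for all cubes in $\mathbb I^d$.
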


\begin{proof}
 Fix $f\in L_2(\T^d; L_2(\M))$. Without loss of generality, assume that $\hat f(\mathbf 0)=f_{\mathbb{I}^d}=0$. By Lemma~\ref{garsia lem} and \eqref{2poissons}, we need to show 
 \beq\label{classical bmo}
 \sup_{\e>0}\sup_{s\in\I^d}\big\|\mathbb{Q}_\e\big[ |f-\mathbb{Q}_\e[f](s)|^2 \big](s)\big\|_{\8}
 \approx  \sup_{Q\subset \mathbb{I}^d  \text{cube}} \Big\|\frac1{|Q|}\int_Q\big|f-f_Q\big|^2dt\Big\|_\8.
 \eeq
Let $Q$ be a cube of $\I^d$. Let $s$ and $\e$ be the center and half of the side length of $Q$, respectively. It is clear that
 $$\frac1{|Q|}\,\un_{Q}(t)\le C_d\,\f_\e(s-t)\le C_d\,\Q_\e(s-t).$$
Thus
$$\frac1{|Q|}\int_Q\big|f(t)-\mathbb{Q}_\e[f](s)\big|^2dt\le C_d\, \mathbb{Q}_\e\big[ |f-\mathbb{Q}_\e[f](s)|^2 \big](s).$$
Then
 $$\frac1{|Q|}\int_Q\big|f-f_Q\big|^2dt
 \le 4\frac1{|Q|}\int_Q\big|f-\mathbb{Q}_\e[f](s)\big|^2dt
 \le 4 C_d\, \mathbb{Q}_\e\big[ |f-\mathbb{Q}_\e[f](s)|^2 \big](s).$$
This yields one inequality of \eqref{classical bmo}. 

To show the converse inequality  fix $s\in\I^d$ and $\e>0$. Consider first the case $\e\ge1/2$.  
 Then $\mathbb{Q}_\e(t)\approx 1$ for any $t\in\I^d.$
It follows that 
 $$\mathbb{Q}_\e\big[|f-\mathbb{Q}_\e[f](s)|^2 \big](s)
 \approx \int_{\I^d}|f-\mathbb{Q}_\e[f](s)|^2\lesssim \int_{\I^d}|f|^2.$$
Whence
 $$\big\|\mathbb{Q}_\e\big[ |f-\mathbb{Q}_\e[f](s)|^2 \big](s)\big\|_{\8}
 \lesssim \Big\|\int_{\I^d}|f|^2\Big\|_\8\le\|f\|^2_{\mathrm{BMO}^c(\mathbb{I}^d; \M)}\,.$$
Now assume $\e<1/2$. By the proof of Theorem \ref{th:FejerPoissonPhiMaxIneq},  for any $t\in\I^d$
 $$\sum_{m\neq\mathbf{0}}\f_\e(t+m)\lesssim \e\lesssim\f_\e(t)\,.$$
Consequently, 
 $$\mathbb{Q}_\e\big[ |f-\mathbb{Q}_\e[f](s)|^2 \big](s)\lesssim \int_{\I^d}\f_\e(s-t)|f(t)-\mathbb{Q}_\e[f](s)|^2dt.$$
Let $Q=\{t\in\I^d\;:\; |t-s|\le\e\}$ and  $Q_k=\{t\in\I^d\;:\; |t-s|\le 2^{k+1}\e\}$.
Then
 \be\begin{split}
 \int_{\I^d}\f_\e(s-t)|f(t)-f_Q|^2dt
 &=\int_{Q}\f_\e(s-t)|f(t)-f_Q|^2dt\\
 &\hskip .2cm+\sum_{k\ge0}\int_{2^k\e<|t-s|\le 2^{k+1}\e}\f_\e(s-t)|f(t)-f_Q|^2dt\\
 & \lesssim  \frac1{|Q|}\int_{Q}|f(t)-f_Q|^2+ \sum_{k\ge0} \frac1{2^{k}|Q_k|}\int_{Q_k}|f(t)-f_Q|^2dt.
 \end{split}\ee
The above sums on $k$ are in fact finite sums. By triangle inequality (with $Q_{-1}=Q$), 
 $$\Big\|\frac1{|Q_k|}\int_{Q_k}|f-f_Q|^2\Big\|_\8^{1/2}\le 
 \Big\|\frac1{|Q_k|}\int_{Q_k}|f-f_{Q_k}|^2\Big\|_\8^{1/2}+\sum_{j=0}^k\|f_{Q_j}-f_{Q_{j-1}}\|_\8.$$
However,
 \be\begin{split}
 \|f_{Q_j}-f_{Q_{j-1}}\|_\8^2
 &\le \Big\|\frac1{|Q_{j-1}|}\int_{Q_{j-1}}|f-f_{Q_j}|^2\Big\|_\8\\
 &\le 2^d\Big\|\frac{1}{|Q_{j}|}\int_{Q_{j}}|f-f_{Q_j}|^2\Big\|_\8
 \le 2^d \|f\|^2_{\mathrm{BMO}^c(\mathbb{I}^d; \M)}.
 \end{split}\ee
Combining the preceding inequalities, we obtain
 $$\big\|\mathbb{Q}_\e\big[ |f-f_Q|^2 \big](s)\big\|_\8
 \lesssim \sum_{k\ge0}\frac{k+1}{2^k}\,\|f\|^2_{\mathrm{BMO}^c(\mathbb{I}^d; \M)}
 \lesssim\|f\|^2_{\mathrm{BMO}^c(\mathbb{I}^d; \M)}.$$ 
Finally, 
 $$\big\|\mathbb{Q}_\e\big[ |f-\mathbb{Q}_\e[f](s)|^2 \big](s)\big\|_\8^{1/2}
 \le 2\big\|\mathbb{Q}_\e\big[ |f-f_Q|^2 \big](s)\big\|_\8^{1/2}
 \lesssim\|f\|_{\mathrm{BMO}^c(\mathbb{I}^d; \M)}.$$
This implies the missing inequality of \eqref{classical bmo}. 
 \end{proof}

\begin{rk}
 The previous proof shows implicitly that the supremum on $\e$ in \eqref{classical bmo} can be restricted to $0<\e<1$. In fact, only small values of $\e$ are important for this supremum. Accordingly, only values of $r$ close to $1$ matter in the two suprema in \eqref{2bmo}. This property can be also verified by the argument in the proof of Lemma~\ref{Hardy cube} below.
 \end{rk}

Functions on $\T^d$ are $1$-periodic functions on $\real^d$, or equivalently, functions on $\I^d$ can be extended to  $1$-periodic functions to $\real^d$. For a function $f$ on $\T^d$ (or $\I^d$) $\tilde f$ will denote the corresponding $1$-periodic function on $\real^d$. Then \eqref{kernel Q} implies that $\Q_\e[f]$ is equal to the Poisson integral of $\tilde f$ on $\real^d$ that will be denoted by $\f_\e[\tilde f]$. Let us record this useful fact here for later reference:
 \beq\label{periodic}
 \Q_\e[f]=\f_\e[\tilde f]=\f_\e*\tilde f\quad\text{on}\quad\I^d.
 \eeq
Recall that $\mathrm{BMO}^c(\real^d; \M)$ is defined as the space of all locally square integrable functions $\psi$ from $\real^d$ to $L_2(\M)$ such that 
 $$\|\psi\|_{\mathrm{BMO}^c}=\max\big\{\big\|\psi_{\mathbb{I}^d}\big\|_\8,\quad
 \sup_{Q\subset \real^d  \text{cube}} \Big\|\frac1{|Q|}\int_Q\big|\psi-\psi_Q\big|^2ds\Big\|_\8^{1/2}\big\}.$$
The following lemma shows that the map $f\mapsto\tilde f$ establishes an isomorphic embedding of $\mathrm{BMO}^c(\T^d; \M)$ into $\mathrm{BMO}^c(\real^d; \M)$.

\begin{lem}\label{periodic BMO}
For any $f\in \mathrm{BMO}^c(\T^d; \M)$ we have
 $$\|f\|_{\mathrm{BMO}^c(\T^d; \M)}\approx \|\tilde f\|_{\mathrm{BMO}^c(\real^d; \M)}$$
with equivalence constants depending only on $d$.
 \end{lem}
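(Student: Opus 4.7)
The plan is to reduce, via Lemma \ref{BMO cube}, to the cube-based characterization on $\I^d$ and then compare cubes in $\I^d$ (possibly with wrap-around) to genuine cubes in $\real^d$ using the $1$-periodicity of $\tilde f$. The constant part of the two norms matches trivially since $\tilde f_{\I^d}=f_{\I^d}$, so the real work concerns the mean-oscillation seminorm.

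For the inequality $\|f\|_{\mathrm{BMO}^c(\I^d;\M)}\lesssim \|\tilde f\|_{\mathrm{BMO}^c(\real^d;\M)}$, every cube $Q\subset\I^d$ (in the wrap-around sense) is the image modulo $\ent^d$ of an axis-parallel cube $\widehat Q\subset\real^d$ with $|\widehat Q|=|Q|$. Periodicity of $\tilde f$ yields $f_Q=\tilde f_{\widehat Q}$ and $\int_Q|f-f_Q|^2=\int_{\widehat Q}|\tilde f-\tilde f_{\widehat Q}|^2$, giving the bound at once.

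For the reverse direction, fix a cube $Q\subset\real^d$ of side length $\ell$. When $\ell\le 1$, the projection $Q\bmod\ent^d$ is a single wrap-around cube $Q'\subset\I^d$ with $|Q'|=|Q|$, and the same periodicity argument transfers the oscillation bound. When $\ell>1$, $Q$ no longer projects to a single cube, so instead I would use the Cauchy-Schwarz-type identity
\[
\frac{1}{|Q|}\int_Q|\tilde f-\tilde f_Q|^2=\frac{1}{|Q|}\int_Q|\tilde f-c|^2-|\tilde f_Q-c|^2\le\frac{1}{|Q|}\int_Q|\tilde f-c|^2,
\]
valid for any $c\in L_2(\M)$, applied with $c=f_{\I^d}$. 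Setting $g=\tilde f-f_{\I^d}$ (which is $1$-periodic with $g_{\I^d}=0$), I would cover $Q$ by the at most $(\ell+2)^d\le(3\ell)^d$ integer translates of $\I^d$ meeting it; by periodicity and operator positivity,
\[
0\le\int_Q g^*g\le (3\ell)^d\int_{\I^d}g^*g\le (3\ell)^d\,\|f\|_{\mathrm{BMO}^c(\I^d;\M)}^2,
\]
and division by $|Q|=\ell^d$ produces the dimensional bound $3^d\|f\|_{\mathrm{BMO}^c(\I^d;\M)}^2$.

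The only substantive point — and hence the main obstacle — is the large-cube case $\ell>1$, and even this is mild: the two operator-level ingredients (the identity $\frac{1}{|Q|}\int_Q|h|^2=|h_Q|^2+\frac{1}{|Q|}\int_Q|h-h_Q|^2$ for Bochner averages of operator-valued functions, and the monotonicity $0\le\int_A g^*g\le \int_B g^*g$ for $A\subset B$) are immediate from the positivity of integration in $L_1(\M)$, so the scalar argument transfers without modification.
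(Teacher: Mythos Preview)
Your proof is correct and follows essentially the same route as the paper's direct argument: reduce via Lemma~\ref{BMO cube} to the cube characterization, identify cubes of side $\le 1$ in $\real^d$ with wrap-around cubes in $\I^d$ by periodicity, and for large cubes bound the oscillation by the full-period mean oscillation using the identity $\frac1{|Q|}\int_Q|h|^2=|h_Q|^2+\frac1{|Q|}\int_Q|h-h_Q|^2$ together with a covering by at most $C_d\ell^d$ unit cells. The only cosmetic difference is that the paper normalizes to $f_{\I^d}=0$ at the outset and passes through a containing cube of integer side length, whereas you subtract $c=f_{\I^d}$ explicitly and count unit translates directly.
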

 
\begin{proof} 
 Let $f\in L_2(\T^d; L_2(\M))$ with $f_{\I^d}=0$. By \eqref{classical bmo} and \eqref{periodic}, we have
 $$\|f\|^2_{\mathrm{BMO}^c(\T^d; \M)}\approx 
 \sup_{\e>0}\sup_{s\in\real^d}\big\|\f_\e\big[|\tilde f-\f_\e[\tilde f](s)|^2 \big](s)\big\|_{\8}.$$
Then the proof of Lemma~\ref{BMO cube} shows that the right hand side above is equivalent to $\|\tilde f\|^2_{\mathrm{BMO}^c(\real^d; \M)}$. Alternately, one can directly prove that the supremum on the right hand side in \eqref{classical bmo} is equivalent to  $\|\tilde f\|^2_{\mathrm{BMO}^c(\real^d; \M)}$. Namely,
 $$ \sup_{Q\subset \mathbb{I}^d  \text{cube}} \Big\|\frac1{|Q|}\int_Q\big|\tilde f-\tilde f_Q\big|^2ds\Big\|_\8
 \approx  \sup_{Q\subset \real^d  \text{cube}} \Big\|\frac1{|Q|}\int_Q\big|\tilde f-\tilde f_Q\big|^2ds\Big\|_\8.$$
Indeed, let $Q$ be a cube in $\real^d$. If $|Q|\le 1$, then by the definition of cubes in $\I^d$ and the periodicity of $\tilde f$, $Q$ can be considered as a cube in $\I^d$. So assume $|Q|>1$. Take another cube $R$ such that $Q\subset R$, $|R|\le 2^d|Q|$ and the side length of $R$ is an integer $k$. Then $R$ is a union of $k^d$ cubes of side length $1$. Thus by the periodicity of $\tilde f$
 \be
 \frac1{|Q|}\int_Q\big|\tilde f-\tilde f_Q\big|^2ds
 \le 4\frac1{|Q|}\int_Q|\tilde f|^2ds
 \le\frac{2^{d+2}}{|R|}\int_R|\tilde f|^2ds=2^{d+2}\int_{\I^d}|\tilde f|^2ds.
 \ee
Therefore, we get the desired equivalence.
 \end{proof}

Now we turn to  the discussion of Hardy spaces. Let  $1\leq p <\infty$.  For $f\in L_\8(\T^d)\overline\ot\M$ define
 \beq\label{torus g}
 G_c(f) (z)=  \Big ( \int_0^1 \Big | \frac{d}{dr}\mathbb{P}_r[f](z)\Big |^2(1-r)dr \Big )^{1/2}\,,\quad z\in\T^d
 \eeq
 and
 $$\|f\|_{\mathrm{H}^c_p}=\|\hat f(\mathbf{0})\|_{p} +\|G_c(f)\|_{p}.$$
Here the first $L_p$-norm is the one of $L_p(\M)$ and the second that of  $L_p(\T^d; L_p(\M))$.  Completing $L_\8(\T^d)\overline\ot\M$ under the norm $\|\,\|_{\mathrm{H}^c_p}$, we get $\mathrm{H}^c_p(\T^d;\M)$.
Like in the $\mathrm{BMO}$ case, we wish to reduce these Hardy spaces to those on $\I^d$. Using the kernel $\Q_\e$ in \eqref{kernel Q},  for $f\in L_\8(\I^d)\overline\ot \M$ let
 \beq\label{tg}
 \wt G_c(f) (s)=  \Big ( \int_0^\8 \Big | \frac{d}{d\e}\mathbb{Q}_\e[f](s) \Big |^2\e d\e \Big )^{1/2}\,,\quad s\in\I^d.
 \eeq
Let $\tilde f$ be the periodic extension of $f$ to $\real^d$. Let $\wt G_c(\tilde f)$ be  the $g$-function  of $\tilde f$ defined by the Poisson kernel $\f_\e$:
 \beq\label{tg periodic}
 \wt G_c(\tilde f) (s)=  \Big ( \int_0^\8 \Big | \frac{d}{d\e}\f_\e[\tilde f](s) \Big |^2\e d\e \Big )^{1/2}\,,\quad s\in\real^d.
 \eeq
Thanks to \eqref{periodic}, we have
 \beq\label{tg=tg periodic}
 \wt G_c(f) =\wt G_c(\tilde f) \quad\text{on }\;\I^d.
 \eeq
Thus $\wt G_c(\tilde f) $ is  the periodic extension to $\real^d$ of $\wt G_c(f) $. Let
 $$\|f\|_{\mathrm{H}^c_p}=\|f_{\I^d}\|_{p} +\|\wt G_c(f)\|_{p}.$$
Here the first $L_p$-norm is the one of $L_p(\M)$ and the second that of  $L_p(\I^d; L_p(\M))$. Define $\mathrm{H}^c_p(\I^d; \M)$ to be the  completion of $(L_\8(\I^d)\overline\ot \M,\|\,\|_{\mathrm{H}^c_p})$.

\begin{lem}\label{Hardy cube}
 Let $1\le p<\8$. Then $\mathrm{H}_p^c(\T^d;\M)=\mathrm{H}^c_p(\I^d; \M)$ with equivalent norms.   
  \end{lem}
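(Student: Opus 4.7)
The plan is to rewrite $G_c(f)$ in the common parameter $\e$ via the change of variable $r=e^{-2\pi\e}$. Using \eqref{2poissons}, a direct computation gives
\begin{equation*}
G_c(f)(z)^2 \;=\; \int_0^\infty \Bigl|\frac{d}{d\e}\mathbb Q_\e[f](s)\Bigr|^2\;\frac{e^{2\pi\e}-1}{2\pi}\,d\e,
\end{equation*}
while $\wt G_c(f)(s)^2 = \int_0^\infty |\frac{d}{d\e}\mathbb Q_\e[f](s)|^2\,\e\, d\e$. Thus $G_c(f)^2$ and $\wt G_c(f)^2$ are integrals of the same operator-valued integrand $|\frac{d}{d\e}\mathbb Q_\e[f]|^2$ against two different weights, namely $w_2(\e)=(e^{2\pi\e}-1)/(2\pi)$ and $w_1(\e)=\e$. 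These two weights agree up to a bounded constant on any bounded $\e$-interval and differ by an exponentially growing factor at infinity.

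Since $e^{2\pi\e}-1\ge 2\pi\e$ for all $\e\ge 0$, one has $G_c(f)^2\ge\wt G_c(f)^2$ pointwise as positive operators, which gives at once $\|f\|_{\mathrm{H}^c_p(\I^d;\M)}\le \|f\|_{\mathrm{H}^c_p(\T^d;\M)}$. For the converse, the integral defining $G_c(f)^2$ is split at $\e=1$. On $(0,1]$ the ratio $w_2/w_1$ is bounded by $(e^{2\pi}-1)/(2\pi)$, so that part is pointwise controlled by a constant multiple of $\wt G_c(f)^2$. For the tail on $[1,\infty)$, I would use the semigroup decomposition
$\frac{d}{d\e}\mathbb Q_\e[f]=\mathbb Q_{\e-1/2}[h]$ where $h:=\frac{d}{ds}\mathbb Q_s[f]|_{s=1/2}$; since $h$ has vanishing Fourier coefficient at $\mathbf 0$, $\mathbb Q_{\e-1/2}[h]$ enjoys the exponential decay $e^{-2\pi(\e-1/2)}$ coming from the spectral gap of $\mathbb Q$ on the mean-zero subspace. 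This decay beats the exponentially growing weight $e^{2\pi\e}$ and delivers an $L_p$-bound of the tail by a constant multiple of $\|h\|_p$. Finally, the $L_p$-contractivity of $\mathbb Q$ on the mean-zero subspace makes $\e\mapsto\|\frac{d}{d\e}\mathbb Q_\e[f]\|_p$ non-increasing, from which $\|h\|_p$ is dominated by an average over $\e\in[1/4,1/2]$ and hence by a multiple of $\|\wt G_c(f)\|_p$.

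The main obstacle will be transporting the tail estimate from $L_2$, where the spectral-gap decay of $\mathbb Q_\tau[h]$ is immediate, to a genuine estimate in the column-$L_p$ setting $L_p(\I^d;L_p(\M))$. Kadison's Cauchy-Schwarz inequality, applied to the upper bound $|\mathbb Q_{\e-1/2}[h]|^2\le\mathbb Q_{\e-1/2}[|h|^2]$, loses the mean-zero property of $h$ (because $|h|^2$ is positive), so the raw exponential-decay bound against $e^{2\pi\e}$ would not even be integrable. One must therefore either argue via the subordination of $\mathbb{Q}$ to the heat semigroup to access an $L_\infty$-bound and interpolate, or work directly with the column $L_p$-structure (as in \cite{JLX2006, Mei2007}) to keep the mean-zero cancellation alive through the Cauchy-Schwarz step. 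These subtleties are precisely the ones that the remaining noncommutative Littlewood-Paley-Stein machinery of this section is designed to handle, so the adaptation is technical but routine once that framework is in place.
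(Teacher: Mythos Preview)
Your change-of-variables formula and the trivial direction $\wt G_c(f)^2\le G_c(f)^2$ are correct. But two steps in the hard direction do not close.

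The more serious gap is the last step, where you claim $\|h\|_p\lesssim\|\wt G_c(f)\|_p$ via monotonicity plus averaging. Monotonicity of $\e\mapsto\|\frac{d}{d\e}\mathbb Q_\e[f]\|_p$ does give $\|h\|_p\le 4\int_{1/4}^{1/2}\|\frac{d}{d\e}\mathbb Q_\e[f]\|_p\,d\e$, but the passage from this to $\big\|(\int_{1/4}^{1/2}|\frac{d}{d\e}\mathbb Q_\e[f]|^2\,\e\,d\e)^{1/2}\big\|_p$ fails for $p>2$: in that range one has in general
\[
\Big(\int\|g(\e)\|_p^2\,d\e\Big)^{1/2}\;\ge\;\Big\|\Big(\int|g(\e)|^2\,d\e\Big)^{1/2}\Big\|_p,
\]
which is the wrong direction. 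So the averaging does not bound $\|h\|_p$ by the column square function. The second gap is the tail $\e\ge1$: you identify the obstacle (Kadison loses the mean-zero of $h$) but do not resolve it; the appeal to ``the remaining machinery of this section'' is circular, since this lemma is part of that machinery.

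The paper handles both issues with a single device you are missing: Fourier-coefficient extraction. One shows that the piece of $G_c$ over any compact $[r_0,r_1]\subset(0,1)$ has $L_p$-norm controlled by $\sup_{m\neq\mathbf 0}\|\hat f(m)\|_p$ (termwise triangle inequality in the column space $L_p(\N;L_2^c)$, using the summable coefficients $|m|\,r^{|m|}$), and conversely each $\|\hat f(m)\|_p$ is recovered from the piece of $G_c$ on $[r_0,1)$ with a constant \emph{uniform in $m$}, since $\int_{r_0}^1|m|^2r^{2|m|-2}(1-r)\,dr$ is bounded below independently of $m$. The same holds for $\wt G_c$. Hence both square functions are equivalent to their truncations near the boundary, and on a bounded $\e$-range the two weights $(e^{2\pi\e}-1)/(2\pi)$ and $\e$ are comparable, which finishes the proof. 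If you attempt to repair your argument---say by bounding the tail term-by-term in Fourier, or by writing $h=-2\pi\sum_{m\neq\mathbf 0}|m|e^{-\pi|m|}\hat f(m)e^{2\pi im\cdot s}$ and using $\|\hat f(m)\|_p\le 2\|\wt G_c(f)\|_p$---you are led straight to this extraction trick.
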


\begin{proof}  
 We first show that in the definition of the Littlewood-Paley function $G_c(f)$ in \eqref{torus g} only values of $r$ close to $1$ matter. More precisely, for any $0<r_0<1$ setting
   $$
 G_{c,r_0}(f) (z)=  \Big ( \int_{r_0}^1 \Big | \frac{d}{dr}\mathbb{P}_r[f](z)\Big |^2(1-r)dr \Big )^{1/2},
 $$
we have
  $$\|G_c(f)\|_{p}\approx \|G_{c,r_0}(f)\|_{p},$$
 where the equivalence constants depend only on $d$ and $r_0$. To this end take any $0\le r_0<r_1<1$ and let
 $$
 G'_{c}(f) (z)=  \Big ( \int_{r_0}^{r_1} \Big | \frac{d}{dr}\mathbb{P}_r[f](z)\Big |^2(1-r)dr \Big )^{1/2}.
 $$
 We claim that
 \beq\label{small values}
 \|G'_c(f)\|_{p}\approx \sup_{n\in\ent^d,n\neq\mathbf{0}}\|\hat f(n)\|_{p}.
 \eeq
Writing the Fourier series expansion of $\mathbb{P}_r[f]$:
 $$\mathbb{P}_r[f](z)=\sum_{n\in\ent^d}r^{|n|_2}\hat f(n)z^n,$$
 we have
 $$ \frac{d}{dr}\mathbb{P}_r[f](z)=\sum_{n\in\ent^d, n\neq\mathbf{0}}|n|_2r^{|n|_2-1}\hat f(n)z^n.$$ 
We then easily get the upper estimate of \eqref{small values}. To show the lower one, for $n\in\ent^d$, $n\neq\mathbf{0}$ we have
 $$|n|_2r^{|n|_2-1}\hat f(n)z^n=\int_{\T^d} \frac{d}{dr}\mathbb{P}_r[f](z\cdot w)w^{-n}dm(w).$$
Let $H=L_2((r_0,\,r_1); (1-r)dr)$. It is clear that for any $z\in\T^d$ we have
 $$\big(\int_{r_0}^{r_1}\big||n|_2r^{|n|_2-1}\hat f(n)z^n\big|^2(1-r)dr\big)^{1/2}\approx |\hat f(n)|.$$
Then by the triangle inequality in the column $L_p$-space $L_p(L_\8(\T^d)\overline\ot\M; H^c)$, we deduce 
 $$\|\hat f(n)\|_{L_p(\M)}\lesssim  
 \int_{\T^d} \Big( \int_{\T^d}\|G'_c(f)(z\cdot w)\|^p_{L_p(\M)}dm(z)\Big)^{1/p}dm(w)=\|G'_c(f)\|_{L_p(\T^d;L_p(\M))}.$$
 Thus the claim is proved. Using \eqref{small values} twice, we get
  \be\begin{split}
  \|G_c(f)\|_{p}
  &\le \Big\| \Big ( \int_{0}^{r_0} \Big | \frac{d}{dr}\mathbb{P}_r[f]\Big |^2(1-r)dr \Big )^{1/2}\Big\|_{p}
  +\|G_{c,r_0}(f)\|_{p}\\
  &\lesssim \sup_{n\in\ent^d,n\neq\mathbf{0}}\|\hat f(n)\|_{p} +\|G_{c,r_0}(f)\|_{p}
  \lesssim \|G_{c,r_0}(f)\|_{p}.
  \end{split}\ee
  Similarly, we show that only small values of $\e$ matter in \eqref{tg} and \eqref{tg periodic}. Namely for $0<\e_0<\8$ letting
  $$\wt G_{c,\e_0}(f) (s)=  
  \Big ( \int_0^{\e_0} \Big | \frac{d}{d\e}\mathbb{Q}_\e[f](s) \Big |^2\e d\e \Big )^{1/2},\quad s\in\I^d,$$
 we have
   $$\|\wt G_{c}(f) \|_{p}\approx \|\wt G_{c,\e_0}(f) \|_{p}.$$
 Now it is easy to finish the proof of the lemma. Indeed, using the change of variables $r=e^{-2\pi\e}$, we get 
 \be\begin{split}
 G_{c,r_0}(f)(z) 
 &=  \frac1{2\pi}\Big ( \int_0^{\e_0} \Big | \frac{d}{d\e}\Q_\e[f] (s)\Big |^2e^{2\pi\e}(1-e^{-2\pi\e})d\e \Big )^{1/2}\\
 &\approx \Big ( \int_0^{\e_0} \Big | \frac{d}{d\e}\Q_\e[f] (s)\Big |^2\e d\e \Big )^{1/2}=\wt G_{c, \e_0}[f](s).
 \end{split}\ee
Together with the previous equivalences, this implies the desired assertion.
\end{proof}

We will also need the Lusin area integral function. For $\a>1$ and  $z\in\T^d$, let $\D_\a(z)$ be the Stoltz domain with vertex $z$ and aperture $\a$ (recalling that $|w|$ denotes the Euclidean norm):
 $$\D_\a(z)=\{w\in\com^d\;:\; |z-w|\le\a(1-|w|)\}.$$
 For $f\in L_\8(\T^d)\overline\ot\M$ define
 \beq\label{torus S}
 S^\a_c(f) (z)=  \Big (\int_{\D_\a(z)} \Big | \frac{d}{dr}\mathbb{P}_r[f](rw)\Big |^2\frac{dm(w)dr}{(1-r)^{d-1} }\Big )^{1/2}\,,
 \quad z\in\T^d,
 \eeq
where the integral is taken  over $\D_\a(z)$ with respect to $rw\in\D_\a(z)$ with $0\le r<1$ and $w\in\T^d$ (recalling that $dm$ is Haar measure of $\T^d$).

Like for the $g$-function, we will transfer $S^\a_c(f) $ to the usual area integral function on $\real^d$.  For $\b>0$ and  $s\in\real^d$ let 
 $$\Ga_\b(s)=\{(t,\e)\in\real^d\times\real_+\;:\; |t-s|\le\b\e\}.$$
 Let  $f\in L_\8(\I^d)\overline\ot \M$ and $\tilde f$ be its periodic extension to $\real^d$. Define
 \begin{eqnarray}\label{tS}
 \begin{array}{ccl}\begin{split}
 \wt S^\b_c(f) (s)
 &= \begin{displaystyle}
  \Big ( \int_{\Ga_\b(s)} \Big | \frac{d}{d\e}\mathbb{Q}_\e[f](t) \Big |^2 \frac{dtd\e}{\e^{d-1}} \Big )^{1/2}
  \end{displaystyle}\\
 &=\begin{displaystyle}
   \Big ( \int_{\Ga_\b(s)} \Big | \frac{d}{d\e}\f_\e[\tilde f](t) \Big |^2 \frac{dtd\e}{\e^{d-1}} \Big )^{1/2}
 =\wt S^\b_c(\tilde f) (s) \end{displaystyle}.
  \end{split}\end{array}
 \end{eqnarray}

The following is the analogue of Lemma~\ref{Hardy cube} for the Lusin square functions.

\begin{lem}\label{Hardy S}
 Let $\a>1$ and $\b>0$. Let $f\in L_\8(\T^d)\overline\ot\M$. Then
 $$\big\|S^\a_c(f) \big\|_{L_p(\T^d;L_p(\M))}\approx 
 \big\|\wt S^\b_c(f) \big\|_{L_p(\I^d;L_p(\M))}$$
with equivalence constants depending only on $d$ and $\a, \b$. Moreover, the norms above are independent of $\a$ and $\b$ up to equivalence.
 \end{lem}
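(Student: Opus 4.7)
The plan is to follow the pattern of Lemma~\ref{Hardy cube}, augmented by a change of variables between torus Stoltz regions and Euclidean cones, plus an aperture-independence argument.

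First I would show, exactly as in the proof of Lemma~\ref{Hardy cube}, that in both $S^\a_c(f)$ and $\wt S^\b_c(f)$ only values of $r$ close to $1$ (resp.\ small $\e$) matter: a Fourier-series calculation gives that the contribution from any compact subinterval $[r_0,r_1]\subset[0,1)$ is dominated in $L_p$ by $\sup_{n\neq\mathbf 0}\|\hat f(n)\|_p$, hence by the full Lusin norm. This lets us truncate the integration to $r\ge r_0$ on the torus side and to $\e\le\e_0$ on the cube side, losing only equivalence constants.

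Next I would perform the change of variables $r=e^{-2\pi\e}$ and $w=e^{2\pi\mathrm i t}$, identifying $\T^d$ with $\I^d$ via $z=e^{2\pi\mathrm i s}$. For $r$ close to $1$ one has $1-r\approx 2\pi\e$ and $dr\approx 2\pi\,d\e$; the Poisson derivatives satisfy $\frac{d}{dr}\mathbb P_r[f]=-\frac{1}{2\pi r}\frac{d}{d\e}\Q_\e[f]$; and the Stoltz condition $|z-rw|\le\a(1-r)$ linearizes, using $|e^{2\pi\mathrm i s}-e^{2\pi\mathrm i t}|\approx|s-t|$ for $s,t$ close, so that the truncated torus Stoltz region $\D_\a(z)$ corresponds to the truncated Euclidean cone $\Ga_{\b(\a)}(s)$ with $\b(\a)\approx\a$. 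Combined with the Jacobian identity $dm(w)\,dr/(1-r)^{d-1}\approx dt\,d\e/\e^{d-1}$, this would yield the pointwise equivalence $S^\a_{c,r_0}(f)(z)\approx\wt S^{\b(\a)}_{c,\e_0}(f)(s)$ and hence the $L_p$-equivalence asserted in the lemma.

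It remains to show that $\|\wt S^\b_c(f)\|_p$ is independent of $\b$ up to equivalence. Monotonicity gives the easy direction $\wt S^{\b_1}_c(f)\le\wt S^{\b_2}_c(f)$ for $\b_1\le\b_2$. For the reverse I would use \eqref{tg=tg periodic}: on $\I^d$, $\wt S^\b_c(f)$ equals the restriction of the Euclidean Lusin area function applied to the periodic extension $\tilde f$. After the first-step truncation to small $\e$, only local values of $\tilde f$ contribute (thanks to the decay of the Poisson kernel), and the question reduces to aperture independence for the noncommutative Lusin function on $\real^d$, which is established in Mei~\cite{Mei2007} via a tangential maximal inequality combined with noncommutative Doob-type bounds. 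Aperture independence for $\|S^\a_c(f)\|_p$ then follows from the change-of-variables equivalence. I expect this last step to be the main obstacle: in the scalar case aperture independence is an elementary Fubini calculation at $L_2$, but in the operator-valued noncommutative setting the $L_2$ argument alone is insufficient and one must genuinely invoke the noncommutative maximal tools of \cite{Mei2007}.
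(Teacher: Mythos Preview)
Your three-step plan (truncate, change variables, aperture independence) is exactly the paper's route, and your truncation argument via Fourier coefficients is the same as in Lemma~\ref{Hardy cube}. Two points deserve comment.

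First, in your change-of-variables step you aim for a single pointwise equivalence $S^\a_{c,r_0}(f)(z)\approx\wt S^{\b(\a)}_{c,\e_0}(f)(s)$. This is slightly too optimistic: the map $(r,w)\mapsto(\e,t)$ does not carry a truncated Stoltz domain to a truncated cone of one fixed aperture. What the paper does instead is observe that for any $\a>1$ and $r_0$ close to $1$ one can find $\b_1,\b_2,\e_1,\e_2$ with
\[
\Ga_{\b_1,\e_1}(s)\subset \D_{\a,r_0}(z)\subset \Ga_{\b_2,\e_2}(s),
\]
giving the pointwise sandwich $\wt S^{\b_1}_{c,\e_1}(f)(s)\lesssim S^\a_{c,r_0}(f)(z)\lesssim \wt S^{\b_2}_{c,\e_2}(f)(s)$. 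This is why aperture independence on the $\I^d$ side is genuinely needed to close the argument, not just a cosmetic add-on. Your plan would converge to this once you tried to make ``$\b(\a)\approx\a$'' precise.

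Second, your localization step (``only local values of $\tilde f$ contribute'') is unnecessary: by \eqref{tS} one already has $\wt S^\b_c(f)=\wt S^\b_c(\tilde f)$ on $\I^d$ identically, so aperture independence for the Euclidean operator-valued Lusin function applies directly to the periodic extension. On the justification of that independence, the paper simply cites \cite{CMS} as a standard harmonic-analysis fact, whereas you flag it as the main obstacle and appeal to Mei's noncommutative machinery. Your caution is not unreasonable---the classical CMS proof uses scalar level sets $\{A^{(\a)}F>\lambda\}$, which have no direct operator-valued analogue---but in the paper this is treated as routine, and the result is in any case available.
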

 
 \begin{proof}
  This proof is similar to that of Lemma~\ref{Hardy cube}. For $0<r_0<1$  we introduce the truncated Stoltz domain:
  $$\D_{\a, r_0}(z)=\{w\in\com^d\;:\; |z-w|\le\a(1-|w|),\, r_0<|w|<1\}.$$
 Also for  $\e_0>0$ set
  $$\Ga_{\b,\e_0}(s)=\{(t,\e)\in\real^d\times\real_+\;:\; |t-s|\le\b\e,\,\e<\e_0\}.$$
 The corresponding truncated square functions are
  $$S^\a_{c, r_0}(f) (z)=  
  \Big (\int_{\D_{\a,r_0}(z)} \Big | \frac{d}{dr}\mathbb{P}_r[f](rw)\Big |^2\frac{dm(w)dr}{(1-r)^{d-1} }\Big )^{1/2}$$
 and 
  $$\wt S^\b_{c,\e_0}(f) (s)=\Big ( \int_{\Ga_{\b,\e_0}(s)} \Big | \frac{d}{d\e}\f_\e[\tilde f](t) \Big |^2 \frac{dtd\e}{\e^{d-1}} \Big )^{1/2}.$$
Then by the reasoning in the proof of  Lemma~\ref{Hardy cube}, we have
  $$\big\|S^\a_c(f) \big\|_{p}\approx 
 \big\|S^\a_{c, r_0}(f) \big\|_{p}$$
and a similar equivalence for $\wt S^\b_c(f)$. On the other hand, it is easy to see that for any $\a>1$ and $0<r_0<1$ there exist $\b_1, \b_2>0$ and $\e_1, \e_2>0$ such that under the change of variables $r=e^{-2\pi\e}$ and $w=e^{-2\pi{\rm i}t}$
 $$\Ga_{\b_1,\e_1}(s)\subset \D_{\a, r_0}(z)\subset\Ga_{\b_2,\e_2}(s),\quad\forall\; z=e^{-2\pi{\rm i}s}\in\T^d.$$
Conversely, every truncated cone $\Ga_{\b,\e_0}(s)$ is located between two truncated Stoltz domains. Then the argument at the end of the proof of  Lemma~\ref{Hardy cube} implies 
 $$\wt S^{\b_1}_{c,\e_1}(f) (s)\lesssim S^\a_{c, r_0}(f) (z)\lesssim \wt S^{\b_2}_{c,\e_2}(f) (s);$$
 whence
 $$\big\|\wt S^{\b_1}_{c,\e_1}(f)\big\|_{L_p(\I^d;L_p(\M))}
 \lesssim \big\|S^\a_{c, r_0}(f)\big\|_{L_p(\T^d;L_p(\M))}\lesssim 
 \big\|\wt S^{\b_2}_{c,\e_2}(f)\big\|_{L_p(\I^d;L_p(\M))}.$$
However, standard arguments in harmonic analysis  show that
 $$\big\|\wt S^{\b_1}_{c}(f)\big\|_{L_p(\I^d;L_p(\M))}\approx \big\|\wt S^{\b_2}_{c}(f)\big\|_{L_p(\I^d;L_p(\M))},$$
where the equivalence constants depend on $d$ and $\b_1,\b_2$ (cf. e.g., \cite{CMS}).  Therefore, we deduce the first equivalence assertion of the lemma. The second part then follows too.
 \end{proof}

Now we can show that the results of \cite{Mei2007} remain valid for $\T^d$ too. We state only those  relevant to Theorem~\ref{H1-BMO}. In the following statement, the row and mixture Hardy/BMO spaces  are defined in the usual way, and
 $ S_{c}(f)=S^2_{c}(f)$, $\wt S_{c}(f)=\wt S^1_{c}(f).$

\begin{thm}\label{H1-BMO bis}
  \begin{enumerate}[\rm i)]

\item The dual space of $\mathrm{H}_1^c(\T^d; \M)$  coincides isomorphically with $\mathrm{BMO}^c(\T^d; \M)$ with the natural duality bracket. The same assertion holds for the row and mixture spaces.

\item Let $1\le p<\8$. Then for any $f\in L_\8(\T^d)\overline\ot\M$
 $$\big\|G_c(f) \big\|_{p}\approx \big\|S_c(f) \big\|_{p}$$
with relevant constants depending only on $d$. Consequently, the two square functions $G_c$ and $S_c$ define a same Hardy space.
   
\item  Let $1<p<\8$. Then $\mathrm{H}_p(\T^d; \M)=L_p(\T^d; L_p(\M))$ with equivalent norms.

\item  Let $1<p<\8$. Then
 $$(\mathrm{BMO}^c(\T^d; \M),\; \mathrm{H}^c_1(\T^d; \M))_{1/p}
 =\mathrm{H}^c_p(\T^d; \M)
 =(\mathrm{BMO}^c(\T^d; \M),\; \mathrm{H}^c_1(\T^d; \M))_{1/p, p}\,.$$

\item  Let $X_0\in\{\mathrm{BMO}(\T^d; \M),\; L_\8(\T^d; L_p(\M))\}$, $X_1\in\{\mathrm{H}_1(\T^d; \M),\; L_1(\T^d; \M)\}$. Then for any $1<p<\8$
  $$(X_0,\; X_1)_{1/p}=L_p(\T^d; \M)=(X_0,\; X_1)_{1/p, p}\,.$$
 \end{enumerate}
  \end{thm}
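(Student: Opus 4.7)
The plan is to transfer each assertion back to the corresponding result of Mei \cite{Mei2007} on $\real^d$ by means of the periodization identifications established in Lemmas \ref{BMO cube}--\ref{Hardy S}. Given $f$ on $\T^d\cong\I^d$ with $1$-periodic extension $\tilde f$ to $\real^d$, the key point \eqref{periodic} says that $\Q_\e[f]$ on $\I^d$ coincides with the usual Poisson integral $\f_\e*\tilde f$ on $\real^d$. Consequently the $g$-function and Lusin area integral of $f$ on $\T^d$ are, up to constants, the restrictions to $\I^d$ of the corresponding Poisson-based square functions of $\tilde f$, and the $\mathrm{BMO}^c$-norm of $f$ on $\T^d$ is equivalent to that of $\tilde f$ on $\real^d$ by Lemma \ref{periodic BMO}. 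Each assertion about $f$ on $\T^d$ thus becomes, modulo harmless constants, the analogous assertion for the periodic $\tilde f$ on $\real^d$ --- either a direct consequence of Mei's theorems, or a localized variant of his proof that respects periodicity.

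The easier parts are (ii) and (iii). For (ii), the $g$-function vs.\ Lusin-area equivalence is a local statement on $\real^d$ (the integrals can be truncated to $0<\e<1$, by the argument in the proofs of Lemmas \ref{Hardy cube} and \ref{Hardy S}), hence transfers intact to periodic functions and then back to $\T^d$ via those lemmas. For (iii) with $1<p<\infty$, one combines Mei's $L_p$-boundedness of the vector-valued Littlewood-Paley $g$-function for the Poisson semigroup on $\real^d$ with the identity \eqref{tg=tg periodic} and the observation that a $1$-periodic function has the same $L_p$-norm on any fundamental domain as the restriction does in $L_p(\I^d;L_p(\M))$. The upper bound $\|\wt G_c(f)\|_p\lesssim\|f\|_p$ is then immediate, and the reverse follows by the standard duality argument at the conjugate index; the row and mixture versions come by taking adjoints and combining column/row inequalities.

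The heart of the proof is (i), the $\mathrm{H}_1^c$--$\mathrm{BMO}^c$ duality. One direction, that $y\in\mathrm{BMO}^c$ induces a bounded functional on $\mathrm{H}_1^c$ via $\langle x,y\rangle=\tau(xy^*)$, is obtained by polarizing the standard $g$-function identity to write the pairing as an integral of $\tfrac{d}{d\e}\Q_\e[x]$ against $\tfrac{d}{d\e}\Q_\e[y]$ over $\I^d\times(0,1)$ with measure $\e\,d\e\,ds$, and estimating this integral through the Carleson-measure characterization of $\mathrm{BMO}^c$ supplied by Lemma \ref{garsia lem}, in the spirit of Mei's tent-space pairing. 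The converse direction requires constructing an atomic decomposition of $\mathrm{H}_1^c(\T^d_\theta)$ with atoms supported in cubes of $\I^d$; this is the main technical obstacle, which I would resolve by adapting Mei's tent-space duality to the truncated strip $\I^d\times(0,1)$. All relevant estimates are local --- only $0<\e<1$ matters by Lemmas \ref{Hardy cube} and \ref{Hardy S} --- so the compact geometry of $\T^d$ ultimately causes no new difficulty once the periodization reductions of Lemmas \ref{BMO cube}--\ref{Hardy S} are in hand. Finally, (iv) and (v) follow from (i) and (iii) together with the standard interpolation machinery for noncommutative $L_p$, Hardy, and $\mathrm{BMO}$ spaces: the complex case via Wolff's reiteration theorem together with the known interpolation $(L_\infty,L_1)_{1/p}=L_p$ and the isomorphism $\mathrm{H}_p=L_p$ from (iii), and the real case via the $K$-functional together with the atomic decomposition just produced.
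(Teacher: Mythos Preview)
Your overall strategy --- periodize and transfer to Mei's $\real^d$ theory via Lemmas~\ref{BMO cube}--\ref{Hardy S} --- is exactly the route the paper takes, and your handling of parts (i) and (ii) is essentially the same as the paper's: it likewise invokes the validity of Mei's Theorem~2.4 argument for periodic functions integrated over $\I^d$, and Mei's Theorem~4.4 for the $g$--$S$ equivalence.

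There is, however, a gap in your approach to (iv). Part (iv) concerns the \emph{column} scale $(\mathrm{BMO}^c,\mathrm{H}_1^c)_{1/p}=\mathrm{H}_p^c$, and $\mathrm{H}_p^c$ is not isomorphic to $L_p$ for $p\ne2$; hence the mixture identity $\mathrm{H}_p=L_p$ from (iii) together with $(L_\infty,L_1)_{1/p}=L_p$ gives you nothing here. Wolff reiteration would require interpolation information \emph{within} the column scale itself, which you have not supplied. The paper instead handles (iii)--(v) uniformly by reducing to dyadic martingales on $\I^d$ (equivalently $\T^d$): one compares $\mathrm{H}_p^c(\T^d;\M)$ and $\mathrm{BMO}^c(\T^d;\M)$ with the corresponding dyadic martingale Hardy and BMO spaces, exactly as Mei does on $\real^d$, and then invokes the known interpolation theorems for noncommutative martingale Hardy/BMO spaces. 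This dyadic comparison is what makes the column interpolation (iv) accessible; your atomic-decomposition route for the real case could in principle be made to work for the column spaces, but it would require building the column atomic theory on $\T^d$ first, which is essentially the same amount of work.

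A smaller point on (iii): a $1$-periodic $\tilde f$ is never in $L_p(\real^d)$, so Mei's $L_p$-boundedness of the $g$-function cannot be applied to $\tilde f$ directly. The paper is explicit that one must \emph{adapt} Mei's proof --- the $L_p$-norms are now taken over $\I^d$ rather than $\real^d$ --- and again the dyadic martingale reduction on $\I^d$ is the mechanism that makes this adaptation go through.
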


\begin{proof}
 By the identification $\T^d\cong\I^d$ and Lemmas~\ref{BMO cube}, \ref{Hardy cube}  and \ref{Hardy S}, it suffices to prove this theorem with $\I^d$ instead of $\T^d$. The geometry of $\I^d$ is closer to that of $\real^d$. However, what makes our arguments parallel to those of \cite{Mei2007}  is the use of periodic functions. This periodization puts the arguments of \cite{Mei2007} directly at our disposal.  For any function $f$ on $\I^d$ with periodic extension $\tilde f$ to $\real^d$, by \eqref{tg=tg periodic} and  \eqref{tS}, we have
  $$\wt G_c(f)=\wt G_c(\tilde f)\quad\text{and}\quad \wt S_c(f)=\wt S_c(\tilde f)\;\text{ on }\; \I^d.$$ 
Note that the two square functions on the right are exactly those introduced in \cite{Mei2007} by using the Poisson kernel on $\real^d$. The only difference compared with \cite{Mei2007} is that the $L_p$-norm of these square functions are now taken in $L_p(\I^d; L_p(\M))$ instead of $L_p(\real^d; L_p(\M))$. In other words, the integral is now taken on $\I^d$ instead of $\real^d$.  On the other hand, by Lemmas~\ref{BMO cube} and \ref{periodic BMO}, the map $f\mapsto\tilde f$ is an isomorphic embedding of $\mathrm{BMO}^c(\I^d; \M)$ into $\mathrm{BMO}^c(\real^d; \M)$. It is now easy to see that the proof of \cite[Theorem~2.4]{Mei2007} is valid for periodic functions and integration on $\I^d$. Hence, we get the duality result in part i) and the equivalence for $p=1$ in part ii). In the same way, we prove the periodic analogue of \cite[Theorem~4.4]{Mei2007}, which implies the remaining case of  ii). The reduction to dyadic martingales of  \cite{Mei2007} is clearly available in our present setting. The dyadic decomposition is now made in $\I^d$ (or equivalently, $\T^d$). In this way, we reduce parts iii)-v) to the martingale case as in  \cite{Mei2007}.   The  verification of all details is, however,   tedious and  lengthy, so it is more reasonable to skip it here.
\end{proof}

\begin{rk}
 It is stated in the final remark of \cite[Chapter~2]{Mei2007} that the relevant constants in part i) above for $\real^d$ are independent of $d$. This does not seem true. In fact, all constants appearing in Theorem~\ref{H1-BMO bis} depend on $d$ (and on $p$ too), except those in part iii) since the semigroup argument described in the paragraph following Theorem~\ref{H1-BMO} yields equivalence constants depending only on $p$. The same comment applies to Theorem~\ref{H1-BMO} too. However, the constants there are independent of the given skew matrix $\theta$. 
 \end{rk}

 \begin{rk}
 The $\mathrm{H}_1$-BMO duality in Theorem~\ref{H1-BMO bis} and Lemma~\ref{BMO cube} imply that $\mathrm{H}_1^c(\T^d; \M)$ admits an atomic decomposition like in the case of $\real^d$. We refer the interested reader to \cite{Mei2007} for more details.
 \end{rk}

Armed with Theorem~\ref{H1-BMO bis} and transference, it is easy to prove Theorem~\ref{H1-BMO}. To this end we still  require the following simple lemma.

\begin{lem}\label{complementation}
 The map $x\mapsto\tilde x$ in Corollary~\ref{prop:TransLp} extends to an isometric embedding from $\mathrm{H}^c_p(\mathbb{T}^d_{\theta})$ into  $\mathrm{H}^c_p(\T^d; \T^d_{\theta})$ for any $1\le p<\8$ and from  $\mathrm{BMO}_c(\mathbb{T}_{\theta}^d)$ into  $\mathrm{BMO}_c(\T^d; \T^d_{\theta})$. Moreover, the images of this embedding are $1$-complemented in their respective spaces.
 \end{lem}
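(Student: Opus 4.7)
The plan splits into two claims: first that $x\mapsto\tilde x$ preserves norms, then that the conditional expectation $\mathbb E$ of Corollary~\ref{prop:TransLp} serves as the norm-one projection onto the image.

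For the isometric embedding, the crucial observation is that the Poisson mean intertwines with $\pi_z$: since both the circular Poisson semigroup on $\T^d$ and the one on $\T^d_\theta$ multiply the frequency $m$ by $r^{|m|_2}$, one checks on Fourier series that $\mathbb P_r[\tilde x]=\wt{\mathbb P_r[x]}$, or equivalently $\mathbb P_r[\tilde x](z)=\pi_z(\mathbb P_r[x])$ for every $z\in\T^d$. Differentiating in $r$ and using that $\pi_z$ is a trace-preserving $*$-isomorphism (hence commutes with the modulus squared), I obtain $G_c(\tilde x)(z)=\pi_z(G_c(x))$. Combining Fubini with the $L_p$-isometry of $\pi_z$ then gives $\|G_c(\tilde x)\|_{L_p(\N_\theta)}=\|G_c(x)\|_{L_p(\T^d_\theta)}$, and since $\hat{\tilde x}(\mathbf 0)=\hat x(\mathbf 0)\,1_{\T^d_\theta}$ I conclude $\|\tilde x\|_{\mathrm H^c_p(\T^d;\T^d_\theta)}=\|x\|_{\mathrm H^c_p(\T^d_\theta)}$. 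The same intertwining yields $\mathbb P_r[|\tilde x-\mathbb P_r[\tilde x]|^2](z)=\pi_z\bigl(\mathbb P_r[|x-\mathbb P_r[x]|^2]\bigr)$, so that the $L_\infty$-norm over $\N_\theta$ equals the one over $\T^d_\theta$ uniformly in $r$; this gives the BMO isometric embedding.

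For complementation, I first verify that $\mathbb E$ commutes with $\mathbb P_r$ on $\N_\theta$: expanding $f\in\N_\theta$ as $\sum_{n,m}a_{n,m}(z^n\otimes U^m)$, $\mathbb E$ keeps only the diagonal $n=m$ terms, and on this diagonal the Poisson semigroups on $\T^d$ and on $\T^d_\theta$ act by the same factor $r^{|m|_2}$. The Cauchy-Schwarz inequality for conditional expectations, applied pointwise in $r$, then gives
\[
\Bigl|\tfrac{d}{dr}\mathbb P_r[\mathbb E f]\Bigr|^2\le\mathbb E\Bigl(\bigl|\tfrac{d}{dr}\mathbb P_r[f]\bigr|^2\Bigr)\quad\text{and}\quad\mathbb P_r[|\mathbb E f-\mathbb P_r[\mathbb E f]|^2]\le\mathbb E\bigl(\mathbb P_r[|f-\mathbb P_r[f]|^2]\bigr).
\]
The second inequality, combined with the fact that $\mathbb E$ is contractive on $L_\infty$, immediately gives $\|\mathbb E(f)\|_{\mathrm{BMO}^c}\le\|f\|_{\mathrm{BMO}^c}$. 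For the first, integrating against $(1-r)dr$ produces $G_c(\mathbb E f)^2\le\mathbb E(G_c(f)^2)$, which I promote to $\|G_c(\mathbb E f)\|_p\le\|G_c(f)\|_p$ by viewing $r\mapsto\tfrac{d}{dr}\mathbb P_r[f]$ as an element of the column Bochner space $L_p(\N_\theta;L_2^c([0,1),(1-r)dr))$ whose norm is precisely $\|G_c(f)\|_p$, and invoking the standard contractivity of $\mathbb E\otimes\mathrm{id}$ on such column $L_p$-spaces (valid because $\mathbb E$ is unital, completely positive and trace preserving). A final direct check shows $\mathbb E(\tilde x)=\tilde x$ for every $x$ (the double expansion of $\tilde x$ is already diagonal in $n=m$), so $\mathbb E$ is a projection of norm one onto the image of the embedding.

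The most delicate point is the last step in the Hardy case when $p<2$: operator monotonicity of $t\mapsto t^{p/2}$ no longer converts the operator inequality $G_c(\mathbb E f)^2\le\mathbb E(G_c(f)^2)$ into an $L_p$-estimate via the trace (for $p\ge 2$ this path works through $L_{p/2}$-contractivity of $\mathbb E$). The column-operator-space viewpoint sidesteps this obstruction cleanly, at the cost of invoking Pisier's machinery of noncommutative vector-valued $L_p$-spaces, where the contractivity of $\mathbb E\otimes\mathrm{id}$ on $L_p(\cdot;H^c)$ follows by interpolation between its obvious boundedness at $p=\infty$ and its dual at $p=1$.
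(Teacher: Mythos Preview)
Your proof is correct and follows the same strategy as the paper: the identity $\mathbb P_r[\tilde x]=\wt{\mathbb P_r[x]}$ gives the isometric embedding, and the conditional expectation $\mathbb E$ (which commutes with $\mathbb P_r$) furnishes the contractive projection. The paper's own proof is extremely terse on the second point---it simply asserts that $\mathbb E$ extends contractively to the Hardy and BMO spaces---whereas you have supplied the justification, including the Kadison--Schwarz step and the column-space argument needed to handle $\mathrm H^c_p$ for $1\le p<2$; this last point is a genuine subtlety that the paper leaves implicit.
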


\begin{proof}
 The first part  follows immediately from the identity $\mathbb{P}_r[\tilde x]=\wt{\mathbb P_r[x]}$ for any $x\in L_1(\T^d_\theta)$.  Identifying $\wt{\T^d_\theta}$ with $\T^d_\theta$, we see that  the conditional expectation $\mathbb E$ from $L_\8(\T^d)\overline\ot\T^d_\theta$ onto $\T^d_\theta$ extends to a contractive projection from $\mathrm{H}^c_p(\T^d; \T^d_{\theta})$ onto $\mathrm{H}^c_p(\T^d_{\theta})$ and from  $\mathrm{BMO}_c(\T^d; \T^d_{\theta})$ onto  $\mathrm{BMO}_c(\T^d_{\theta})$.  This yields the second part.
 \end{proof}

\noindent{\it The proof of Theorem~\ref{H1-BMO}}. It is now clear that Theorem~\ref{H1-BMO} follows immediately from Theorem~\ref{H1-BMO bis} (with $\M=\T^d_{\theta}$) and Lemma~\ref{complementation}. \hfill $\Box$

\begin{rk}
 Since $\T^d_\theta\subset \mathrm{BMO}(\mathbb{T}_{\theta}^d)$, part ii) of  Theorem~\ref{H1-BMO} implies that $\mathrm{H}_1(\mathbb{T}^d_{\theta})\subset L_1(\T^d_\theta)$ and
  $$\|x\|_1\le C\|x\|_{\mathrm{H}_1},\quad \forall\; x\in \mathrm{H}_1(\mathbb{T}^d_{\theta}).$$
\end{rk}

\medskip\n{\bf Acknowledgements.} We are grateful to Tao Mei and Eric Ricard for useful discussions.

\bigskip



\begin{thebibliography}{99}

\bibitem{AM2011} S. Avsec and T. Mei.
\newblock $\mathrm{H}_1$-BMO duality on group von Neumann algebras.
\newblock  In progress.

\bibitem{Bek2008} T.N. Bekjan. 
\newblock Noncommutative maximal ergodic theorems for positive contractions.
\newblock {\it J. Funct. Anal.} 254 (2008), 2401-2418.

\bibitem{BCPY2010} T.N. Bekjan, Z.  Chen, M. Perrin, and Z. Yin.
\newblock  Atomic decomposition and interpolation for Hardy spaces of noncommutative martingales.
\newblock {\it J. Funct. Anal.} 258 (2010), 2483-2505.


\bibitem{CMS} R.R.Coifman, Y. Meyer and E. M. Stein. 
\newblock Some new function sapces and their applications to harmonic analysis.
\newblock {\it J. Funct. Anal.} 62 (1985), 304-335.

\bibitem{Connes1980} A. Connes.
\newblock  C*-alg\`{e}bres et g\'{e}om\'{e}trie differentielle.
\newblock {\it C. R. Acad. Sci. Paris.} 290 (1980), 599-604.

\bibitem{Connes1994} A. Connes.
\newblock {\it Noncommutative Geometry}.
\newblock Academic Press, San Diego, CA, 1994.

\bibitem{CM2011} A. Connes and H. Moscovic.
\newblock Modular curvature for noncommutative two-tori.
\newblock  arXiv: 1110.3500.


\bibitem{ER2000} E. Effros and Z. J. Ruan.
\newblock  {\it Operator Spaces}.
\newblock  Oxford University Press, Oxford, 2000.

\bibitem{EL2007} G. A. Elliott and H. Li.
\newblock  Morita equivalence of smooth noncommutative tori.
\newblock {\it Acta Math.} 199 (2007), 1-27.

\bibitem{Gar2007} J. B. Garnett.
{\it Bounded Analytic Functions, Revised First Edition,}
Graduate Texts in Mathematics 236. Springer, New York, 2007.

\bibitem{Gra2008} L. Grafakos.
\newblock {\it Classical Fourier analysis}, Second Edition.
\newblock Springer-Verlag, New York, 2008.

\bibitem{Har1999} A. Harcharras.
\newblock Fourier analysis, Schur multipliers on $S^p$ and non-commutative $\Lambda (p)$-sets.
\newblock {\it Studia Math.} 137 (1999), 203-260.

\bibitem{Junge2002} M. Junge.
\newblock Doob's inequalities for noncommutative martingales.
\newblock {\it J. Reine Angew. Math.} 549 (2002), 149-190.

\bibitem{Junge2005} M. Junge.
\newblock Embedding of the operator space OH and the logarithmic `little Grothendieck inequality'.
\newblock  {\it Invent. Math.} 161 (2005), 225-286.

\bibitem{JLX2006} M. Junge, C. Le Merdy, and Q. Xu.
\newblock $H^{\infty}$-functional calculus and square functions on noncommutative $L^p$ spaces.
\newblock {\it Ast\'{e}risque}. 305 (2006), vi + 138 pp.

\bibitem{JM2010} M. Junge and T. Mei.
\newblock Noncommutative Riesz transforms - a probabilistic approach.
\newblock  {\it Amer. J. Math.} 132 (2010), 611-681.

\bibitem{JM2011} M. Junge and T. Mei.
\newblock BMO spaces associated with semigroups of operators.
\newblock {\it Math. Ann.} 352 (2012), 691-743.

\bibitem{JMP2011} M. Junge, T. Mei and J. Parcet.
\newblock Smooth Fourier multipliers on group von Neumann algebras.
\newblock arXiv: 1010.5320v4.

\bibitem{JP2008} M. Junge and J. Parcet.
\newblock  Operator space embedding of Schatten $p$-classes into von Neumann algebra preduals.
\newblock {\it Geom. Funct. Anal.}  18 (2008), 522-55.

\bibitem{JP2010} M. Junge and J. Parcet.
\newblock   Mixed-norm inequalities and operator space $L_p$ embedding theory.
\newblock  {\it  Mem. Amer. Math. Soc.} 203 (2010), vi+155 pp.

\bibitem{JP2010b} M. Junge and J. Parcet.
\newblock    Maurey's factorization theory for operator spaces.
\newblock {\it Math. Ann.} 347 (2010), 299-338.

\bibitem{JX2003}M. Junge and Q. Xu.
\newblock Noncommutative Burkholder/Rosenthal inequalities.
\newblock Part I: {\it Ann. Probab.} 31 (2003), 948-995; Part II: {\it Israel J. Math.} 167 (2008), 227-282.

\bibitem{JX2007} M. Junge and Q. Xu.
\newblock Noncommutative maximal ergodic theorems.
\newblock {\it J. Amer. Math. Soc.} 20 (2007), 385-439.

\bibitem{JX2010} M. Junge and Q. Xu. Representation of certain homogeneous Hilbertian operator spaces and applications. 
{\it Invent. Math.} 175 (2010), 75-118.

\bibitem{Kadison1952} R. V. Kadison.
\newblock A generalized Schwarz inequality and algebraic invariants for operator algebras.
\newblock {\it Ann. Math.} 56 (1952), 494-503.

\bibitem{Lance1976} E. C. Lance.
\newblock Ergodic theorems for convex sets and operator algebras.
\newblock {\it Invent. Math.} 37 (1976), 201-214.

\bibitem{Mei2007} T. Mei.
\newblock Operator valued Hardy spaces.
\newblock {\it Memoir Amer. Math. Soc.} 881 (2007), vi+64 pp.

\bibitem{Mei2008} T. Mei.
\newblock Tent spaces associated with semigroups of operators.
\newblock {\it J. Funct. Anal.} 255 (2008), 3356-3406.

\bibitem{MP2009} T. Mei and J. Parcet.
\newblock Pseudo-localization of singular integrals and noncommutative Littlewood-Paley inequalities.
\newblock {\it Intern. Math. Res. Not.} 9 (2009), 1433-1487.

\bibitem{NR2011} S. Neuwirth and \'{E}. Ricard.
\newblock Transfer of Fourier multipliers into Schur multipliers and sumsets in a discrete group.
\newblock {\it Canad. J. Math.} 63 (2011), 1161-1187.

\bibitem{Parcet2009} J. Parcet.
Pseudo-localization of singular integrals and noncommutative Calder\'{o}n-Zygmund theory.
\newblock {\it J. Funct. Anal.} 256 (2009), 509-593.

\bibitem{PR2006} J. Parcet and N. Randrianantoanina.
\newblock Gundy's decomposition for non-commutative martingales and applications.
\newblock {\it Proc. London Math. Soc.} 93 (2006), 227-252.

\bibitem{Per2009} M. Perrin.
\newblock  A noncommutative Davis' decomposition for martingales.
\newblock   {\it J. Lond. Math. Soc.} 80 (2009), 627-648.

\bibitem{Pisier1998} G. Pisier.
\newblock Noncommutative vector valued $L_p$ spaces and completely $p$-summing maps.
\newblock {\it Asterisque}. 247 (1998), vi+131 pp.
 
 \bibitem{Pisier2003} G. Pisier.
\newblock {\it Introduction to Operator Space Theory}.
\newblock Cambridge University Press, Cambridge, 2003.

\bibitem {PS2002} G. Pisier and  D. Shlyakhtenko.
\newblock  Grothendieck's theorem for operator spaces.
\newblock  {\it Invent. Math.} 150 (2002), 185-217.

\bibitem {PX1997} G. Pisier and Q. Xu.
\newblock Non-commutative martingale inequalities.
\newblock {\it Commun. Math. Phys.} 189 (1997), 667-698.

\bibitem{PX2003} G. Pisier and Q. Xu.
\newblock Noncommutative $L^p$-spaces.
\newblock {\it Handbook of the geometry of Banach spaces Vol. 2,} 
ed. W.B.Johnson and J.Lindenstrauss, 2003, 1459-1517, North-Holland, Amsterdan.

\bibitem{Ran2002} N. Randrianantoanina.
\newblock Noncommutative martingale transforms.
\newblock  {\it J. Funct. Anal.} 194 (2002), 181-212.

\bibitem{Ran2007} N. Randrianantoanina.
\newblock   Conditional square functions for noncommutative martingales.
\newblock  {\it Ann. Proba.} 35 (2007), 1039-1070.

\bibitem{Rie1990} M. A. Rieffel.
\newblock Non-commutative tori - A case study of non-commutative differentiable manifolds.
\newblock {\it Contemporary Math.} 105 (1990), 191-211.

\bibitem{Spera92} M. Spera.
  \newblock  Sobolev theory for noncommutative tori.
 \newblock {\it Rend. Sem. Mat. Univ. Padova}.  86 (1992), 143-156.

\bibitem{Stein1958} E. M. Stein.
\newblock Localization and summability of multiple Fourier series.
\newblock {\it Acta Math.} 100 (1958), 93-147.

\bibitem{SW1975} E. M. Stein and G. Weiss.
\newblock {\it Introduction to Fourier Analysis on Euclidean Spaces}.
\newblock Princeton University Press, Princeton, 1975.

\bibitem{Tak1979} M. Takesaki.
\newblock {\it Theory of Operator Algebras I}.
\newblock Springer-Verlag, New York, 1979.

\bibitem{Var2006} J. C. V\'{a}rilly.
\newblock {\it An Introduction to Noncommutative Geometry}.
\newblock European Math. Soc. 2006.

\bibitem{Weaver1996} N. Weaver.
\newblock Lipschitz algebras and derivations of von Neumann algebras.
\newblock {\it J. Funct. Anal.} 139 (1996), 261-300.

\bibitem{Weaver2001} N.Weaver.
\newblock {\it Mathematical Quantization}.
\newblock CRC Press, Florida, 2001.

\bibitem{Xu2006} Q. Xu.
\newblock  Operator space Grothendieck inequalities for noncommutative $L_p$-spaces.
\newblock  {\it Duke Math J.} 131 (2006), 525-574.

\bibitem{Xu2006b} Q. Xu.
\newblock  Embedding of $C_q$ and $R_q$ into noncommutative  $L_p$-spaces, $1\le p<q\le2$. 
\newblock {\it Math. Ann.} 335 (2006), 109-131.

\end{thebibliography}
\end{document}